\documentclass[reqno]{amsart}
\usepackage{amsmath}
\usepackage{amssymb}
\usepackage{titlesec} 
\usepackage{graphics}
\usepackage{graphicx}
\usepackage{tikz}
\usepackage{hyperref}
\usepackage{hyperref}

%
%
%
%




\titleformat{\section}[hang]{ \bf}
  {\thesection}{0.5em}{\large}

\titleformat{\subsection}[runin]{\bf}
{\thesubsection}{0.5em}{}

%
%


\newcommand{\Real}{\mathrm{Re}\,}

\newcommand{\aff}{\mathrm{aff}\,}
\def\argmin{ \mathop{{\rm argmin}}}
\def\argmax{ \mathop{{\rm argmax}}}

\newcommand{\co}{\mathrm{conv}\,}
\newcommand{\cl}{\mathrm{cl}\,}
\newcommand{\clco}{\mathrm{\overline{conv}}\,}
\newcommand{\diag}{\mathrm{diag}\,}
\newcommand{\dom}{\mathrm{dom}\,}
\newcommand{\epi}{\mathrm{epi}\,}

\newcommand{\gph}{\mathrm{gph}\,}
\newcommand{\inter}{\mathrm{int}\,}

\newcommand{\ri}{\mathrm{ri}\,}

\newcommand{\rge}{\mathrm{rge}\,}

\newcommand{\hzn}{\mathrm{hzn}\,}
\newcommand{\lin}{\mathrm{span}\,}

\newcommand{\p}{\partial}

\newcommand{\R}{\mathbb{R}}

\newcommand{\bS}{\mathbb{S}}

\newcommand{\lev}[2]{\mathrm{lev}_{#1}(#2)}

\newcommand{\rank}{\mathrm{rank}\,}
\newcommand{\rbar}{\overline{\mathbb R}}

\newcommand{\rp}{\mathbb R\cup\{+\infty\}}
\newcommand{\bR}{\mathbb{R}}

\def\bS{\mathbb{S}}

\def\bH{\mathbb{H}}

\newcommand{\tr}{\mathrm{tr}\,}

\newcommand{\cone}{\mathrm{cone}}

\newcommand{\bN}{\mathbb{N}}
\newcommand{\bE}{\mathbb{E}}
\newcommand{\bG}{\mathbb{G}}

\newcommand{\bC}{\mathbb{C}}
\newcommand{\map}[3]{#1 :#2\rightarrow #3}
\newcommand{\ip}[2]{\left\langle #1,\, #2\right\rangle}
\newcommand{\half}{\frac{1}{2}}

\newcommand{\sqnorm}[1]{\left\Vert #1\right\Vert_2^2}

\newcommand{\set}[2]{\left\{#1\,\left\vert\; #2\right.\right\}}

\newcommand{\clcone}[1]{\overline{\mathrm{cone}}\; #1}

\newcommand{\tX}{{\widetilde X}}

\newcommand{\tU}{{\widetilde U}}
\newcommand{\tV}{{\widetilde V}}

\newcommand{\tC}{{\widetilde C}}
\newcommand{\tD}{{\widetilde D}}

\def\tmu{{\tilde\mu}}

\newcommand{\alf}{\alpha}
\newcommand{\gam}{\gamma}

\newcommand{\sig}{\sigma}

\newcommand{\lam}{\lambda}

\newcommand{\tgam}{{\tilde{\gam}}}

\newcommand{\cD}{\mathcal{Q}}

\newcommand{\cF}{\mathcal{F}}

\newcommand{\st}{\ \mbox{ such that }\ }

\newcommand{\Epi}[2]{{#1}\mbox{-}\mathrm{epi}{\,#2}}



\definecolor{armygreen}{rgb}{0.29, 0.33, 0.13}
\definecolor{forest}{rgb}{0.0, 0.5, 0.0}

%
%

\newtheorem{proposition}{Proposition}
\newtheorem{theorem}[proposition]{Theorem}
\newtheorem{corollary}[proposition]{Corollary}
\newtheorem{lemma}[proposition]{Lemma}
\newtheorem{definition}[proposition]{Definition}

\newtheorem{example}[proposition]{Example}
\newtheorem{remark}[proposition]{Remark}

\begin{document}

\title[A study of convex convex-composite functions]{A study of convex convex-composite functions via infimal convolution with applications}

\author[J.V. Burke]{James V. Burke}
\address{Department of Mathematics, University of Washington, 7521 - 30th Ave. N.E., Seattle, WA 98115, USA }
\email{jvburke01@gmail.com}

\author[T. Hoheisel]{Tim Hoheisel}
\address{Department of Mathematics and Statistics, McGill University, 805 Sherbrooke St West, Room 1114, Montr\'eal, Qu\'ebec, Canada H3A 0B9}
\email{tim.hoheisel@mcgill.ca}
 
\author[Q.V. Nguyen]{Quang V. Nguyen}
\address{Department of Mathematics and Statistics, McGill University, 805 Sherbrooke St West, Montr\'eal, Qu\'ebec, Canada H3A 0B9}
\email{van.q.nguyen@mcgill.ca}

\dedicatory{In memory of Jonathan M. Borwein}

\begin{abstract}
In this note we provide a full conjugacy and subdifferential calculus  
for convex convex-composite functions  in finite-dimensional space. Our  approach,   based on infimal convolution and cone-convexity, is straightforward and yields the desired results under a    verifiable Slater-type condition, with relaxed monotonicity   and without lower semicontinuity   assumptions on the functions in play. The versatility of our findings is illustrated by a series of applications  in  optimization and   matrix analysis, including conic programming, matrix-fractional,  variational Gram,    and spectral   functions.
\end{abstract}


\keywords{convex-composite function, cone-induced ordering, $K$-convexity,  Fenchel conjugate,  infimal convolution, subdifferential, conic programming, matrix-fractional function, variational Gram function, spectral function}
\subjclass[2010]{52A41, 65K10, 90C25, 90C46}

\maketitle

%


%
%
%

\section{Introduction} 

\noindent
Convex-composite optimization is a class of nonsmooth,  nonconvex optimization problems
that captures a wide variety of optimization models studied in modern optimization
practice and theory including
nonlinear programming (NLP), nonconvex minimax problems, nonconvex system
identification, inverse problems and nonlinear filtering
as well as most nonconvex problems in large-scale data analysis  and machine
learning, e.g. \cite{drusvyatskiyefficiency, DuR2017arXiv170502356D,duchi2017stochastic}. Given  two Euclidean spaces $\bE_1$ and $\bE_2$, convex-composite problems take the form
\begin{equation}\label{eq:cvx-comp}
\min_{x\in\bE_1} \Phi(x):=f(x)+g(F(x)),
\end{equation}
where $\map{F}{\bE_1}{\bE_2}$ is continuous, usually smooth, while
$\map{f}{\bE_1}{\R\cup\{+\infty\}}$ and $\map{g}{\bE_2}{\R\cup\{+\infty\}}$
are closed, proper, convex functions. 
The function $g$ encodes the modelling framework
such as an NLP or an inverse problem and the function $f$ is a regularizer used to induce
further properties on the solution such as sparsity, smoothness, or a domain restriction. The function $F$ is the 
functional data associated with a specific instance of the problem.
In many treatments the function $f$ is often subsumed into the function $g$. This adds simplicity and is useful 
when establishing a number of theoretical results, a fact that we  also exploit in  our study.

The study of the  convex-composite class began in the 
1970's \cite{powell1978algorithms, powell1978fast}
with major contributions occurring in the
1980's  \cite{burke1985descent,burke1987second,kawasaki1988second,rockafellar1985extensions,rockafellar1989second, wright1987local, yuan1985superlinear} 
and 
1990's \cite{BF 95, burke1992optimality, deng1996uniqueness,RoW 98}. 
Recently there has been a resurgence of interest in the foundations of
this problem class due to its importance for many problems in modern optimization,
machine learning, and large scale data analysis 
\cite{BuE19, BuH 13, cibulka2016strong,cui2018composite, davis2018stochastic, drusvyatskiy2018error, drusvyatskiyefficiency, DuR2017arXiv170502356D,duchi2017stochastic, lewis2016proximal}.

In this note we study the case where  $\Phi$ is convex.   Although the convex-composite setting was proposed as a  structure approach to nonconvex problems, understanding the convex case is   important. This is   illustrated in this paper by  the applications in different areas of convex analysis and optimization which underline the relevance of our study.

So, when is $\Phi$ convex?
We answer this with a simple result based on \cite[Lemma 5.1]{Bur91}. 
The statement
and proof are elementary, and so we
give this result now since it motivates much of our subsequent investigation.

\begin{theorem}\label{thm:cvx cvxcomp}
Let the assumptions in \eqref{eq:cvx-comp} hold.
Let $\alf\in \R$ be such that 
$\lev{g}{\alf}:=\set{y\in\bE_2}{g(y)\le\alf}\ne\emptyset$ and define the horizon cone of $g$
by $$\hzn{g}:=\set{w\in \bE_2}{\forall t\geq 0:\;y+tw\in\lev{g}{\alf}\;\; (y\in\lev{g}{\alf})}.$$
If $F$ is convex with respect to 
$-\hzn{g}$, i.e. 
\[
F((1-\lam)x_1+\lam x_2)-[  (1-\lam)F(x_1)+\lam F(x_2)]\in \hzn{g}
\quad(x_1,x_2\in\bE_1,\ \lam\in[0,1] ),
\]
then $\Phi$ is a convex function.
\end{theorem}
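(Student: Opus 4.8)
The plan is to strip off $f$ and reduce the statement to the recession (horizon) calculus for the closed convex function $g$. Since $f$ is convex, $\Phi$ is convex as soon as the composition $h:=g\circ F$ is, so I would fix $x_1,x_2\in\bE_1$ and $\lam\in(0,1)$ (the cases $\lam\in\{0,1\}$ being trivial), set $x_\lam:=(1-\lam)x_1+\lam x_2$, and aim at the inequality $h(x_\lam)\le(1-\lam)h(x_1)+\lam h(x_2)$. If $g(F(x_1))=+\infty$ or $g(F(x_2))=+\infty$ the right-hand side is $+\infty$ and there is nothing to prove, so I may assume $F(x_1),F(x_2)\in\dom g$. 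Then $\bar y:=(1-\lam)F(x_1)+\lam F(x_2)\in\dom g$ because $\dom g$ is convex, and convexity of $g$ gives $g(\bar y)\le(1-\lam)g(F(x_1))+\lam g(F(x_2))$.

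The second ingredient is a standard fact about closed convex functions (cf.\ Rockafellar, \emph{Convex Analysis}, \S8, or Rockafellar--Wets, \emph{Variational Analysis}, Ch.~3): for proper, lsc, convex $g$ with $\lev{g}{\alf}\ne\emptyset$, the recession cone of $\lev{g}{\alf}$ is one and the same for every admissible $\alf$ --- this is precisely the cone denoted $\hzn g$ in the statement --- and it equals $\set{w\in\bE_2}{g(y+w)\le g(y)\ (y\in\dom g)}$. In particular, closedness of $g$ (imposed in \eqref{eq:cvx-comp}) is what makes $\hzn g$ well defined independently of the choice of $\alf$, and it guarantees the implication $w\in\hzn g,\ y\in\dom g\ \Longrightarrow\ g(y+w)\le g(y)$.

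Granting these two facts, the conclusion is one line. Put $w:=F(x_\lam)-\bar y$; the hypothesis that $F$ is convex with respect to $-\hzn g$ says exactly that $w\in\hzn g$. Hence
\[
h(x_\lam)=g(F(x_\lam))=g(\bar y+w)\le g(\bar y)\le(1-\lam)g(F(x_1))+\lam g(F(x_2))=(1-\lam)h(x_1)+\lam h(x_2),
\]
and adding $f(x_\lam)\le(1-\lam)f(x_1)+\lam f(x_2)$ yields $\Phi(x_\lam)\le(1-\lam)\Phi(x_1)+\lam\Phi(x_2)$, i.e.\ $\Phi$ is convex.

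The one genuinely non-routine point is the horizon-cone identity used in the second paragraph: that the recession cone of a sublevel set of a closed convex function is independent of the level and forces $g(y+w)\le g(y)$ on all of $\dom g$, not merely on the level set used to define it. Closedness of $g$ enters exactly here, but the fact is classical. Everything else --- eliminating $f$, disposing of the $+\infty$ cases, and the final chain of inequalities --- is bookkeeping, so I expect the written-out proof to consist of essentially that displayed inequality preceded by a one-sentence reminder of the recession-cone fact.
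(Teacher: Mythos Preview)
Your argument is correct, but it takes a different route from the paper's proof. You use the \emph{primal} characterization of the horizon cone---namely that $w\in\hzn g$ forces $g(y+w)\le g(y)$ for every $y\in\dom g$---and then finish with the obvious chain of inequalities. The paper instead uses the \emph{dual} characterization: by \cite[Theorem~14.2]{Roc 70}, $\hzn g$ is the polar of $\overline{\cone}(\dom g^*)$, so $(-\hzn g)$-convexity of $F$ is equivalent to the scalarized inequality $\ip{z}{F(x_\lam)}\le\ip{z}{(1-\lam)F(x_1)+\lam F(x_2)}$ for every $z\in\dom g^*$; subtracting $g^*(z)$ and taking the supremum over $z$ then gives the result via $g=g^{**}$. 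Your approach is more elementary and is in fact the one the paper itself later singles out (see Lemma~\ref{lem:HorizonIncrease} and the remark following it that Theorem~\ref{thm:cvx cvxcomp} ``is now merely an immediate consequence of Lemma~\ref{lem:HorizonIncrease} and Proposition~\ref{prop:composite1}''). The paper's conjugacy-based proof, on the other hand, foreshadows the scalarization $\ip{v}{F}$ that drives the entire subsequent development.
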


\begin{proof}
First recall that, by \cite[Theorem 14.2]{Roc 70},  $\hzn{g}$ is the polar cone of $\overline{\cone}(\dom g^*)$, 
where $g^*$ is the convex conjugate of $g$, cf. \eqref{eq:Conjugate} below.
Hence, $F$ is convex with respect to $-\hzn{g}$ if and only if
\[
\ip{z}{F((1-\lam)x_1+\lam x_2)}\le \ip{z}{(1-\lam)F(x_1)+\lam F(x_2)}\quad (z\in \dom g^*),
\] 
or equivalently,
\[
\ip{z}{F((1-\lam)x_1+\lam x_2)}-g^*(z)\le 
\ip{z}{(1-\lam)F(x_1)+\lam F(x_2)}-g^*(z)\;(z\in \dom g^*).
\] 
Since   $g=g^{**}$ (as $g$ is closed, proper, convex) this proves the result. 
\end{proof}

\noindent
This result illustrates the key features of our study:
(i)
the convexity of $F$ with respect to a cone $K$ and 
(ii) 
the  convexity of  $g$ and its monotonicity  with respect to $K$.
This structure is present in the most familiar result concerning the convexity
of compositions which states that  if $\bE_2=\R$ and $g$ is nondecreasing and  convex on 
its domain and $F$ is convex, then $\Phi$ is convex. In this case, clearly, $K=\R_+$.
 In particular, since $g$ is nondecreasing,
$\hzn{g}\subset \R_-$.

These two ideas, cone convexity and monotonicity, are 
certainly not new and are fundamental in convex analysis. Hence 
we now describe some of the most pertinent references for this study:
Borwein \cite{Bor 74} 
pursued an ambitious program of extending
most of convex analysis to cone convex functions including conjugacy, subdifferential analysis,
and duality, laying out much of the groundwork.  Kusraev and Kutateladze  \cite{KK95} take this idea to an even more general setting by considering {\em convex operators} with values in arbitrary ordered vectors spaces. Combari, Laghdir, and Thibault \cite{CLT 94} specifically study the convexity of  $\Phi$ in the function
space setting from the perspective of cone convexity, establishing conjugate and subdifferential formulas for  $\Phi$ under Fenchel-Rockafellar and Attouch-Br\'ezis-type qualification conditions, respectively.  
Pennanen \cite{Pen 99} studies the convexity of $\Phi$ by  developing a deep  theory of generalized differentiation  for graph-convex  mappings, which yields some results on convex  convex-composite functions as a by-product.
More recently,  based on an approach by Burachik et al. \cite{BJW 06},  
Bo\c{t}, Grad, and Wanka \cite{BGW 08, BGW 09} provided a  powerful characterization of the situation when the conjugate  and subdifferential formulae established in \cite{CLT 94}, and in our study below,  hold.

Our approach to  convex convex-composite functions differs from the above  studies:  With the exception of Pennanen \cite{Pen 99}, all the above references are set in  infinite dimension. We  work in 
finite-dimensional  space which enables us to exploit the full strength of the {\em relative topology} of convex sets. This is particularly important with respect to our main convex-analytical workhorse, which is {\em infimal convolution}, see  Section \ref{sec:InfConv} for details.  In contrast, the approach in the papers \cite{CLT 94} and  \cite{BGW 08, BGW 09}, which are  most closely related to our study, is via a  pertubation function.  Our approach via infimal convolution is  more straightforward,   hence more  accessible. In addition, at least in the finite-dimensional setting, it yields  more refined results with relaxed monotonicity and without lower semicontinuity of the functions in play.
 Our analysis is also facilitated in that we study the simple format $g\circ F$ first, and then extend it to the additive composite setting $f+g\circ F$.  Thus, a main contribution of the paper is a  simplified derivation of the central convex-analytical results for convex convex-composite functions, resulting in  refined statements about  conjugacy and subdifferentiation,  under verifiable, point-based Slater-type conditions, see e.g. Theorem \ref{th:ConjMain} and  Corollary \ref{cor:Sub} for the simple $g\circ F$ setting, and Corollary \ref{cor:AddConj} for the additive composite setting. Moreover, inspired by 
Theorem \ref{eq:cvx-comp},  we emphasize and study  the case where $F$ is convex with resepect to the negative horizon cone of $g$, see Lemma \ref{lem:HorizonIncrease} and Corollary \ref{cor:Conj2}. Given the significance of this case for the applications, see Section \ref{sec:App},  this was certainly  lacking in the above mentioned references on the topic. 
 Another major   contribution of our study is to illustrate the versatility and unifying character of the  calculus of convex convex-composite setting by a series of applications of very different flavors.   These  go beyond what was considered before, and include connections to  conic programming in Section \ref{sec:ConicProg}, and to   matrix analysis and modern matrix  optimization, see  Sections \ref{sec:Kiefer}-\ref{sec:Lewis}. More concretely,   Section \ref{sec:Kiefer} contains a new extension of the {\em matrix-fractional function} \cite{BGH17, BGH 18, BuH15} to the complex domain. Section \ref{sec:VGF} provides new,  short proofs for the  conjugate and subdifferential  of {\em variational Gram functions} \cite{BGH 18,JFX 17}, and Section \ref{sec:Lewis} gives a new  proof of Lewis'  well-known result on {\em spectral functions} \cite{Lew 95, Lew 96} for the convex case. Section 5.6 extends a Farkas-type result due to Bot et al. \cite{Bot et al. 06}.

\medskip

\noindent 
{\em Notation:} Throughout $\bE$ denotes a Euclidean space, i.e. a finite-dimensional, real vector space with inner product  denoted by $\ip{\cdot}{\cdot}$.
For $A,B\subset \bE$ their {\em Minkowski sum} is given by  $A+B:=\set{a+b}{a\in A, \; b\in B}$. In case  $A=\{x\}$ we simply write $x+B:=\{x\}+B$.  We write $\rbar:=\R\cup\{\pm \infty\}$ for the extended real line. For a complex number $z=a+bi\;(a,b\in \R)$ and its complex conjugate $\bar z:=a-bi$,  $\Real z:=\frac{z+\bar z}{2}$ is its real part.
 Given a linear map $L:\bE_1\to \bE_2$, we write  $L^*$ for  its {\em adjoint}  in the sense of linear algebra which is used accordingly for matrices. We denote the space of real  and complex   $n\times m$ matrices by $\bR^{n\times m}$ and $\bC^{n\times m}$, respectively.   The $n\times n$ identity matrix    is denoted by $I_n$.  The space of Hermitian   $n\times n$    matrices is denoted by $\bH^n$  and   the  positive semidefinite and positive definite   matrices in $\bH^n$  are given by $\bH_+^n$ and $\bH^n_{++}$, respectively. We set $\bS^n:=\bH^n\cap \R^{n\times n}$ ,  $\bS^n_+:=\bH_+^n\cap \R^{n\times n}$ and $\bS^n_{++}:=\bH^n_{++}\cap \R^{n\times n}$. The trace of $A\in \bC^{n\times n}$   is denoted by $\tr A$.

\section{Preliminaries}\label{sec:Prelim}

\noindent
In what follows $\bE$ denotes a Euclidean space, i.e. a finite-dimensional, real vector space with inner product  denoted by $\ip{\cdot}{\cdot}$.   Given $S\subset \bE$, its {\em convex hull} $\co S$ is the smallest convex set containing $S$.
Given a convex set $C\subset \bE$, its {\em affine hull}  is the smallest affine set that contains $C$ or, equivalently, 
$\aff C=\lin C+\bar x$ for any $\bar x\in C$.
  The  {\em relative interior} of $C$ is the interior in the topology relative to its affine hull, i.e.
$
\ri C:=\set{x\in C}{\exists \varepsilon>0: B_\varepsilon(x)\cap \aff C\subset C}.
$
The {\em horizon (or recession) cone} of (the convex set) $C$ is the closed, convex cone given by
\[
C^\infty:=\set{v\in \bE}{\forall t>0: \bar x+tv\in \cl C\;(\bar x\in C)}.
\] 
We call a function $f:\bE\to \rbar$ {\em convex} if  its {\em epigraph}  $\epi f:=\set{(x,\alpha)\in \bE \times \R}{f(x)\leq \alpha}$ is  a convex set. We say that $f$ is {\em proper} if its {\em domain} $\dom f:=\set{x}{f(x)\leq +\infty}$ is nonempty and $f$ does not take the value $-\infty$. We call $f$ {\em lower semicontinuous (lsc)} or {\em closed} if $\epi f$ is closed. This is equivalent to saying that $f$ equals its  {\em closure} or {\em lower semicontinuous hull} $\cl f:\bE\to\rbar$ given by
\[
(\cl f)(x):=\inf\set{\alpha}{\exists \{x_k\}\to x:f(x_k)\to \alpha}\quad (x\in \bE).
\] 
We employ the following abbreviations:
\begin{eqnarray*}
\Gamma(\bE)& := & \set{f:\bE\to\rp}{f\;\text{is proper and convex}},\\
\Gamma_0(\bE) & := & \set{f\in \Gamma(\bE)}{f\;\text{is lower semicontinuous}}.
\end{eqnarray*}
When the underlying space is clear, we   simply write $\Gamma$ and $\Gamma_0$, respectively.  A central operation that 
maps $\Gamma_0$ (one-to-one) to itself is  {\em Fenchel conjugation}: For a  function $f:\bE\to\rp$ its {\em (Fenchel) conjugate} $f^*:\bE\to \rbar$ is given by
\begin{equation}\label{eq:Conjugate}
f^*(y)=\sup_{x\in \bE}\{\ip{y}{x}-f(x)\}\quad (y\in\bE).
\end{equation}
The {\em biconjugate} of $f$ is $f^{**}:=(f^*)^*$.  It is well known, see e.g. \cite[Theorem~12.2]{Roc 70}, that for $f\in \Gamma$ we have $\cl f=f^{**}\in \Gamma_0$.  The  {\em subdifferential} of $f$ at $\bar x\in \dom f$ is the set 
\[
\p f(\bar x):=\set{v\in \bE}{f(\bar x)+\ip{v}{x-\bar x}\leq f(x)\;(x\in \bE)}.
\]
Subdifferential and conjugacy operator satisfy 
the {\em Fenchel-Young inequality} 
\begin{equation}\label{eq:FYI}
f(x)+f^*(v)\geq \ip{x}{v}\quad (x,v\in \bE),
\end{equation}
where 
\begin{equation}\label{eq:FYE}
f(x)+f^*(v) =\ip{x}{v}\iff v\in \p f(x)\quad (x,v\in \bE).
\end{equation}
For $f\in \Gamma(\bE)$, and  its {\em horizon cone} is given by 
$
\hzn f:=\lev{f}{\alpha}^\infty,
$
where $\lev{f}{\alpha}:=\set{x\in \bE}{f(x)\leq \alpha}$ is any nonempty (sub)level set of $f$ to the level $\alpha\in \R$.

\subsection{Infimal convolution}\label{sec:InfConv}

The most elementary, yet most central  convexity-preserving functional operation is addition.   
It is paired in duality,  in the sense of Fenchel conjugacy  with {\em infimal convolution}, where for two functions $f, g\colon\bE\to\rp$ their infimal convolution is defined by
\[
(f\Box g)(x) := \inf_{y\in\bE}\;\{f(y)+g(x-y)\}\quad (x\in\bE).
\]
If the infimum is attained for all $x\in \dom f\Box g=\dom f+\dom g$, we slightly abuse notation and write  
\[
(f\Box g)(x) = \min_{y\in\bE}\;\{f(y)+g(x-y)\}.
\]
The following classical result, see e.g.  \cite[Theorem 16.4]{Roc 70}, clarifies the conjugacy relation between addition and infimal convolution of convex functions.

\begin{theorem}[Conjugacy and infimal convolution]\label{th:AttBre} Let $f,g\in \Gamma(\bE)$. Then  $(\cl f+\cl g)^*=\cl(f^*\Box g^*)$.
 If 
 \[
 \ri(\dom f)\cap \ri(\dom g)\neq \emptyset
 \]
  then the closures are superfluous  and the infimum is attained for all $y\in \dom (f^*\Box g^*)$, i.e. 
 \[
  (f+g)^*(y)=\min_{u\in \bE} \{f^*(u)+g^*(y-u)\}.
 \]
\end{theorem}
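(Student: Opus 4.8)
The plan is to prove the closure identity unconditionally and then, under the relative-interior hypothesis, to strip the closures by a direct separation argument (which is really a self-contained proof of Fenchel's duality theorem).

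\emph{Step 1 (the unconditional identity).} The engine is the elementary fact that, for \emph{any} two functions $\phi,\psi\colon\bE\to\rp$, one has $(\phi\Box\psi)^*=\phi^*+\psi^*$: expanding definitions, $(\phi\Box\psi)^*(v)=\sup_{x,y}\{\ip{v}{x}-\phi(y)-\psi(x-y)\}$, and the change of variables $z=x-y$ factors this into $\sup_y\{\ip{v}{y}-\phi(y)\}+\sup_z\{\ip{v}{z}-\psi(z)\}=\phi^*(v)+\psi^*(v)$. Applying this with $\phi=f^*$, $\psi=g^*$ and using that $f,g\in\Gamma(\bE)$ forces biconjugation to be closure (\cite[Theorem~12.2]{Roc 70}), we get $(f^*\Box g^*)^*=f^{**}+g^{**}=\cl f+\cl g$. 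Conjugating once more, and noting that $f^*\Box g^*$ is convex, \cite[Theorem~12.2]{Roc 70} again yields $(\cl f+\cl g)^*=(f^*\Box g^*)^{**}=\cl(f^*\Box g^*)$.

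\emph{Step 2 (reduction under $\ri(\dom f)\cap\ri(\dom g)\neq\emptyset$).} Since a function and its closure share the same conjugate, $(f+g)^*=(\cl(f+g))^*$; and the hypothesis forces $\cl(f+g)=\cl f+\cl g$ (closure of a sum of proper convex functions with intersecting relative-interior domains, cf.\ \cite{Roc 70}), so $(f+g)^*=\cl(f^*\Box g^*)$ by Step~1. It remains to show that $f^*\Box g^*$ is \emph{already} closed and that its defining infimum is attained throughout $\dom f^*+\dom g^*$. Replacing $g$ by $g(\cdot)-\ip{y}{\cdot}$ leaves $\dom g$ — hence the hypothesis — unchanged and merely translates $g^*$ by $y$, so it suffices to treat $y=0$: I claim that if $\mu:=\inf_{x\in\bE}\{f(x)+g(x)\}$ is finite, then some $\bar u$ satisfies $f^*(\bar u)+g^*(-\bar u)=-\mu=(f+g)^*(0)$. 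The bound $f^*(u)+g^*(-u)\geq-\mu$ for every $u$ is immediate by adding the Fenchel--Young inequalities $f^*(u)\geq\ip{u}{x}-f(x)$ and $g^*(-u)\geq-\ip{u}{x}-g(x)$ and taking the supremum over $x$; the real content is the existence of an optimal $\bar u$.

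\emph{Step 3 (separation, and the obstacle).} I would produce $\bar u$ by separating, in $\bE\times\R$, the convex sets $\epi f=\set{(x,t)}{f(x)\leq t}$ and $C:=\set{(x,t)}{t\leq\mu-g(x)}$. Their relative interiors are disjoint, since a common relative-interior point would give an $x$ with $f(x)<t<\mu-g(x)$, contradicting the definition of $\mu$ (using the standard description of $\ri(\epi f)$, cf.\ \cite{Roc 70}). Proper separation (\cite[Theorem~11.3]{Roc 70}) provides $(w,s)\neq(0,0)$ and $c\in\R$ with $\ip{w}{x}+st\leq c\leq\ip{w}{x'}+st'$ for all $(x,t)\in\epi f$ and $(x',t')\in C$. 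Upward unboundedness of $\epi f$ gives $s\leq 0$; and $s=0$ is impossible, since it would make $w$ a \emph{proper} separating functional of $\dom f$ and $\dom g$, which cannot exist when their relative interiors meet (\cite[Theorem~11.3]{Roc 70} again). \textbf{This exclusion of the vertical case is the one genuinely delicate point, and it is exactly where the Slater-type hypothesis enters.} With $s=-1$, evaluating the two inequalities at $t=f(x)$ and at $t'=\mu-g(x')$ gives $f^*(w)\leq c$ and $g^*(-w)\leq -c-\mu$, hence $f^*(w)+g^*(-w)\leq-\mu$; combined with the Fenchel--Young bound this forces equality, with $\bar u=w$ optimal. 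Undoing the translation, $(f+g)^*(y)=\min_u\{f^*(u)+g^*(y-u)\}$ for every $y$. Finally, if $(f+g)^*(y)<\infty$ then $f(\cdot)+g(\cdot)-\ip{y}{\cdot}$ is bounded below, so applying the same argument to $f$ and $g(\cdot)-\ip{y}{\cdot}$ produces $\bar u$ with $f^*(\bar u)<\infty$ and $g^*(y-\bar u)<\infty$, i.e.\ $y\in\dom f^*+\dom g^*$; thus $\dom(f+g)^*=\dom(f^*\Box g^*)$, so $f^*\Box g^*$ coincides with the closed function $(f+g)^*$ and is itself closed — the closures in Step~1 are superfluous.
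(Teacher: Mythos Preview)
The paper does not prove this theorem: it is stated as a classical result and simply attributed to \cite[Theorem~16.4]{Roc 70}, so there is no ``paper's own proof'' against which to compare. Your argument is essentially the standard one found in Rockafellar's book: the unconditional identity via $(\phi\Box\psi)^*=\phi^*+\psi^*$ and biconjugation, followed by a direct proof of Fenchel duality through proper separation of $\epi f$ from the hypograph of $\mu-g$, with the relative-interior hypothesis used precisely to rule out a vertical separating hyperplane.

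The proof is correct. A couple of minor remarks: in Step~1 you invoke $(f^*\Box g^*)^{**}=\cl(f^*\Box g^*)$, which requires a word in the improper case (if $f^*\Box g^*$ ever hits $-\infty$ both sides are identically $-\infty$, so the identity survives); and Step~2's appeal to $\cl(f+g)=\cl f+\cl g$ is in fact redundant, since Step~3 already yields $(f+g)^*(y)=\min_u\{f^*(u)+g^*(y-u)\}$ directly for every $y$ with $(f+g)^*(y)<\infty$, and $(f+g)^*\le f^*\Box g^*$ handles the remaining $y$. But these are cosmetic; the substance is sound.
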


\noindent
The second, most central convexity-preserving operation is (pre)-composition with a linear (affine) map: 
Given a closed, proper, convex function $g:\bE_2\to \rp$ and a  linear  map $F:\bE_1\to \bE_2$, 
it is readily seen  that   the conjugate  $(g\circ F)^*=\cl (F^*g^*)$ is the lower semicontinuous hull of the function
\[
(F^*g^*)(y)= \inf \set{g^*(z)}{F^*(z)=y}.
\]  
In order to ensure that the closure operation can be omitted, the relative interior condition 
\[
F^{-1}(\ri (\dom g))\neq \emptyset
\]
 can be used, see \cite[Theorem 16.3]{Roc 70}. This result is, in essence, equivalent to Theorem \ref{th:AttBre}. Combining the two,  one arrives at the strong duality theorem of  {\em Fenchel-Rockafellar duality} for problems of  the form 
\begin{equation}
\label{eq:p1}
\min_{x\in\bE_1}\; f(x)+g(F(x)),
\end{equation} 
Under the qualification condition
\[
0\in \ri(\dom g-F(\dom f))
\]
it can be established that 
\[
\inf_{x\in\bE_1} \; f(x)+g(F(x))=\max_{y\in\bE_2} f^*(F^*(y))+g^*(-y).
\]
Evidently, we are interested in the case  of problem \eqref{eq:p1} where $F$ is not necessarily linear, but the composition $g\circ F$ is still convex, which is exactly the convex convex-composite setting.
%

\section{$K$-convexity}\label{sec:KConv}

\noindent
A subset $K\subset \bE$ is called a {\em cone} if 
\[
\lambda x\in K \quad (\lambda\geq 0, x\in K).
\]
A cone $K$ is convex if and only if $K+K\subset K$ and it is (called) {\em pointed} if $K\cap (-K)=\{0\}$.  
The {\em polar (cone)}  of a cone $K$ is defined by $K^\circ:=\set{v\in \bE}{\ip{v}{x}\leq 0\;(x\in K)}$. 
Given a cone $K\subset \bE$  the relation 
\[
x\leq_Ky \quad :\iff\quad  y-x\in K\quad (x,y\in \bE)
\]
induces an  ordering on $\bE$ which is a partial ordering if $K$ is  convex and  pointed, see  e.g. \cite[Proposition 3.38]{RoW 98}. We attach to $\bE$ 
a {\em largest element}  $+\infty_\bullet$ with respect to said  ordering,  which satisfies 
\[
x\leq_K +\infty_\bullet\quad(x\in \bE).
\]
We will set $\bE^\bullet:=\bE\cup\{+\infty_\bullet\}$.  
For a function $F:\bE_1\to\bE_2^\bullet$ its {\em domain, graph and range} 
are defined respectively as 
\[
\begin{aligned}
\dom F &:=  \set{x\in \bE_1}{F(x)\in \bE_2}, \\
\gph F&:=\set{(x,F(x))\in\bE_1\times\bE_2}{x\in\dom F},\\
\rge F&:=\set{F(x)\in\bE_2}{x\in\dom F}.
\end{aligned}
\]
We call $F$ {\em proper} if $\dom F\neq \emptyset$.
The following concept is central to our study.
\begin{definition}[$K$-convexity]\label{def:KConv} Let $K\subset \bE_2$ be a  cone and $F:\bE_1\to \bE_2^\bullet$. Then we call $F$ {\em convex with respect to $K$} or {\em $K$-convex} if its {\em $K$-epigraph}
\[
\Epi{K}{F}:=\set{(x,v)\in \bE_1\times \bE_2}{F(x)\leq_K v}
\] 
is convex (in $\bE_1\times \bE_2$).
\end{definition}
\noindent
We point out that,  in the setting of Definition \ref{def:KConv}, a $K$-convex function $F$ has a convex domain as 
$
\dom F=L(\Epi{K}{F})
$
where $L\colon\bE_1\times\bE_2\to\bE_1\colon (x,v)\mapsto x$. Thus,  $F$ is $K$-convex if and only if 
\[
F(\lambda x+(1-\lambda)y)\leq_K \lambda F(x)+(1-\lambda)F(y)\quad (x,y\in \dom F,\;\lambda\in  [0,1]).
\]
Clearly, we can recover the traditional notion of a convex function $g:\bE\to\rp$  by using $K=\R_+$ in Definition \ref{def:KConv}. Thus the following lemma   is a generalization of the relative interior formula for the epigraph of  an ordinary convex function, see e.g.  \cite[Lemma 7.3]{Roc 70}.



\begin{lemma}[Relative interior of $K$-epigraph]\label{lem:RiKEpi} Let $K\subset \bE_2$ be a  convex cone, and let  $F:\bE_1\to\bE_2^\bullet$ be proper and $K$-convex.
 Then 
\begin{eqnarray*}
 \ri(\Epi{K}{F})& =  & \set{(x,v)}{x\in \ri(\dom F),\; F(x)\leq_{\ri(K)} v}\\
 & = &\big( \Epi{(\ri K)}{F}\big)\bigcap\big(\ri(\dom F)\times \bE_2\big). 
 \end{eqnarray*}
 In particular, if $\dom F=\bE_1$, then $\ri(\Epi{K}{F})=\Epi{(\ri K)}{F}$.
\end{lemma}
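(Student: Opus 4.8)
The plan is to obtain both identities from the standard calculus of relative interiors under linear images and preimages, in the same way as the classical epigraph formula \cite[Lemma 7.3]{Roc 70}, of which the statement here is precisely the $K$-valued analogue. Write $C:=\Epi{K}{F}$, which is convex by $K$-convexity of $F$; let $L\colon\bE_1\times\bE_2\to\bE_1$, $(x,v)\mapsto x$, be the canonical projection; and for $x\in\bE_1$ put $C_x:=\set{v\in\bE_2}{(x,v)\in C}$. A direct check gives the two elementary identities $L(C)=\dom F$ and, for every $x$, $C_x=F(x)+K$ if $x\in\dom F$ while $C_x=\emptyset$ otherwise: indeed $(x,v)\in C$ forces $x\in\dom F$, and conversely, for $x\in\dom F$, $(x,v)\in C$ means exactly $v-F(x)\in K$. (We may assume $K\neq\emptyset$, the statement being vacuous when $C=\emptyset$.)

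First I would settle the projection: by \cite[Theorem 6.6]{Roc 70} applied to the linear map $L$ and the convex set $C$, $\ri(\dom F)=\ri(L(C))=L(\ri C)$. In particular any $(x,v)\in\ri C$ has $x\in\ri(\dom F)$, which already gives the inclusion $\ri C\subseteq\ri(\dom F)\times\bE_2$, so it remains to identify the fibre $(\ri C)_{\bx}:=\set{v\in\bE_2}{(\bx,v)\in\ri C}$ for $\bx\in\ri(\dom F)$. Fixing such a $\bx$, consider the affine injection $\iota_{\bx}\colon\bE_2\to\bE_1\times\bE_2$, $v\mapsto(\bx,v)$, for which $\iota_{\bx}^{-1}(C)=C_{\bx}$. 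Since $\bx\in\ri(\dom F)=L(\ri C)$, there is some $v$ with $(\bx,v)\in\ri C$, i.e. $\iota_{\bx}^{-1}(\ri C)\neq\emptyset$, so the preimage theorem \cite[Theorem 6.7]{Roc 70} (in its affine form, reduced to the linear case by a translation) applies and yields
\[
\ri(C_{\bx})=\ri\big(\iota_{\bx}^{-1}(C)\big)=\iota_{\bx}^{-1}(\ri C)=(\ri C)_{\bx}.
\]
As $C_{\bx}=F(\bx)+K$ and relative interior is translation-invariant, $\ri(C_{\bx})=F(\bx)+\ri K$, whence $(\ri C)_{\bx}=\set{v\in\bE_2}{v-F(\bx)\in\ri K}=\set{v\in\bE_2}{F(\bx)\leq_{\ri K}v}$.

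Assembling the pieces, $\ri C=\bigcup_{\bx\in\ri(\dom F)}\{\bx\}\times(\ri C)_{\bx}=\set{(x,v)}{x\in\ri(\dom F),\ F(x)\leq_{\ri K}v}$, which is the first claimed equality. The second equality is then immediate from the definition of $\Epi{(\ri K)}{F}$, once one notes that $F(x)\leq_{\ri K}v$ already forces $x\in\dom F$; and the ``in particular'' statement follows by specialising to $\dom F=\bE_1$, so that $\ri(\dom F)=\bE_1$ and the intersection with $\ri(\dom F)\times\bE_2$ is superfluous. The only step that needs a moment's thought is checking that \cite[Theorem 6.7]{Roc 70} may be legitimately applied to the affine map $\iota_{\bx}$ and that its qualification hypothesis is exactly $\bx\in\ri(\dom F)$; beyond that I anticipate no real obstacle, and in particular no lower semicontinuity of $F$ and no closedness of $K$ is required.
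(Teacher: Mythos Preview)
Your proof is correct and follows essentially the same approach as the paper: both work fiberwise, computing $\ri C_x=F(x)+\ri K$ and then characterizing $\ri(\Epi{K}{F})$ through its projection onto $\bE_1$ and its fibres. The only difference is cosmetic: the paper invokes \cite[Theorem~6.8]{Roc 70} directly (the ready-made ``$(x,v)\in\ri C\iff x\in\ri D$ and $v\in\ri C_x$'' result), whereas you reassemble that theorem from its ingredients \cite[Theorems~6.6 and~6.7]{Roc 70} via the projection $L$ and the affine section $\iota_{\bx}$.
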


\begin{proof}
For  $x\in\bE_1$ set $
C_x=\set{v\in\bE_2}{(x,v)\in\Epi{K}{F}}.$
It follows from \cite[Theorem~6.7, Corollary~6.6.2]{Roc 70} that
\[
\ri C_x= \ri\big(L_x^{-1}(K)\big)=L_x^{-1}(\ri K)=\set{v\in\bE_2}{v\in F(x)+\ri K},
\]
where $L_x\colon v\mapsto v-F(x)$. Hence, by \cite[Theorem~6.8]{Roc 70}, we find that
\[
\begin{aligned}
(x,v)\in\ri(\Epi{K}{F})&\Longleftrightarrow x\in\ri\set{x\in\bE_1}{C_x\not=\emptyset}\;\text{and}\; v\in\ri C_x\\
&\Longleftrightarrow x\in\ri(\dom F) \;\text{and}\; v\in\ri C_x\\
&\Longleftrightarrow x\in\ri(\dom F)\;\text{and}\; v\in F(x)+\ri K. 
\end{aligned} 
\]
\end{proof}

\noindent
We now turn  our attention to the composite setting:
For $F:\bE_1\to\bE_2^\bullet$ and  $g:\bE_2\to \rp$ we define the composition of $g$ and $F$ as
\begin{equation}\label{eq:Comp}
(g\circ F)(x)=
\begin{cases}
 g(F(x)), & \text{if }  x\in \dom F,\\ +\infty, & \text{else}. \end{cases}
\end{equation}
In particular, given $v\in \bE_2$ and the linear form $\ip{v}{\cdot}:\bE_2\to\R$, we set $ \ip{v}{F}:= \ip{v}{\cdot} \circ F
$, i.e. \[
    \ip{v}{F}(x)=\begin{cases}
    \ip{v}{F(x)}, & \text{if }  x\in \dom F,\\ +\infty,  & \text{else.}\end{cases}
\]
This scalarization of $F$ is   central to our study, a fact which  is already foreshadowed in the next two  results.  These are 
in a similar form already present in the literature, e.g. \cite{Pen 99}, but we give the elementary proofs for completeness.

\begin{theorem}[Scalarization of $K$-Convexity] \label{thm:K convexity}
Let $K\subset \bE_2$ be a cone, let   $F:\bE_1\to \bE_2^\bullet$ and $v\in \bE_2$. Then the following hold: 
\begin{itemize}
\item[a)]  If $F$ is $K$-convex then $\ip{v}{F}$ is convex for all $v\in -K^\circ$. 
\item[b)] The converse  of a) holds true if $K$ is closed and convex.
\end{itemize}
Consequently, if $K$ is a closed, convex cone, then $F$ is $K$-convex
if and only if $\ip{v}{F}$ is convex for all $v\in -K^\circ$. 
\end{theorem}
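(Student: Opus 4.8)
The plan is to prove parts a) and b) separately, since the ``Consequently'' statement is the immediate conjunction of the two under the closedness/convexity hypothesis on $K$. The unifying idea is to translate everything into statements about the convexity of the $K$-epigraph $\Epi{K}{F}$ versus ordinary epigraphs, and to use the elementary fact that a set is convex if and only if it has convex intersection with (enough) affine subspaces, or more precisely to exploit the duality between the cone $K$ and $-K^\circ$ via the defining inequality $\ip{v}{w}\le 0$ for $w\in K$, $v\in K^\circ$.

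For part a), suppose $F$ is $K$-convex and fix $v\in -K^\circ$, i.e. $\ip{v}{w}\le 0$ for all $w\in -K$, equivalently $\ip{v}{w}\ge 0$ for all $w\in K$ — let me be careful: $v\in -K^\circ$ means $-v\in K^\circ$, so $\ip{-v}{w}\le 0$ for $w\in K$, i.e. $\ip{v}{w}\ge 0$ for $w\in K$. Now take $x_1,x_2\in\dom F$ (hence in $\dom \ip{v}{F}$) and $\lambda\in[0,1]$. By $K$-convexity, $\lambda F(x_1)+(1-\lambda)F(x_2) - F(\lambda x_1+(1-\lambda)x_2)\in K$, so pairing with $v$ and using $\ip{v}{\cdot}\ge 0$ on $K$ gives $\ip{v}{F(\lambda x_1+(1-\lambda)x_2)}\le \lambda\ip{v}{F(x_1)}+(1-\lambda)\ip{v}{F(x_2)}$. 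Since $\dom F$ is convex (noted after Definition \ref{def:KConv}) and $\ip{v}{F}=+\infty$ off $\dom F$, this is exactly convexity of $\ip{v}{F}$ on all of $\bE_1$.

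For part b), assume $K$ is closed and convex and that $\ip{v}{F}$ is convex for every $v\in-K^\circ$. We want $\Epi{K}{F}$ convex. Take $(x_1,u_1),(x_2,u_2)\in\Epi{K}{F}$ and $\lambda\in[0,1]$, and set $\bx=\lambda x_1+(1-\lambda)x_2$, $\bu=\lambda u_1+(1-\lambda)u_2$. We must show $\bu - F(\bx)\in K$. First, since $x_i\in\dom F$ and $\dom F = L(\Epi{K}{F})$ is the image of a set we are assuming convex — no, that's circular; instead note each $(x_i,u_i)\in \Epi{K}{F}$ forces $x_i\in\dom F$, and we need $\bx\in\dom F$. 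This follows because for any $v\in-K^\circ$, $\ip{v}{F}$ is convex hence has convex (in particular, a) nonempty-valued on the segment) domain containing $x_1,x_2$; but $\dom\ip{v}{F}=\dom F$, so $\bx\in\dom F$. Now for each $v\in -K^\circ$: from $u_i - F(x_i)\in K$ we get $\ip{v}{u_i-F(x_i)}\ge 0$, i.e. $\ip{v}{F(x_i)}\le\ip{v}{u_i}$ (recall $v\in-K^\circ\Rightarrow\ip{v}{\cdot}\ge0$ on $K$). Adding with weights and using convexity of $\ip{v}{F}$: $\ip{v}{F(\bx)}\le\lambda\ip{v}{F(x_1)}+(1-\lambda)\ip{v}{F(x_2)}\le\lambda\ip{v}{u_1}+(1-\lambda)\ip{v}{u_2}=\ip{v}{\bu}$. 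Hence $\ip{v}{\bu-F(\bx)}\ge 0$ for all $v\in-K^\circ$, i.e. $\bu-F(\bx)\in(-K^\circ)^\circ\cap(\text{sign bookkeeping})$. The main technical obstacle — and the only place closedness and convexity of $K$ enter — is precisely this last step: concluding $\bu-F(\bx)\in K$ from the validity of all these inequalities. This is the bipolar theorem: for a closed convex cone $K$ one has $(-K^\circ)^\circ = -K^{\circ\circ} = K$ (up to the sign convention; concretely, $\ip{v}{w}\ge 0$ for all $v$ with $-v\in K^\circ$ says $\ip{-v}{w}\le 0$ for all $-v\in K^\circ$, i.e. $w\in K^{\circ\circ}=\cl\co K=K$), which is where I would cite \cite[Theorem 14.1]{Roc 70}. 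The ``Consequently'' line then follows by combining a) and b). I expect the proof to run a few lines per part; the only subtlety to handle carefully is the sign/polarity bookkeeping and making sure the domain point $\bx$ is handled before invoking the scalar inequalities.
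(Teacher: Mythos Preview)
Your proposal is correct and follows essentially the same approach as the paper: part a) is identical (pair the $K$-convexity inclusion with $v\in-K^\circ$), and part b) uses the same bipolar argument, the only cosmetic difference being that you verify convexity of $\Epi{K}{F}$ directly while the paper verifies the equivalent inequality $\lambda F(x)+(1-\lambda)F(y)-F(\lambda x+(1-\lambda)y)\in K$ on $\dom F$. Your treatment is in fact slightly more careful about the domain point $\bx\in\dom F$, which the paper leaves implicit.
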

\begin{proof}
a) Suppose that $F$ is $K$-convex and let $v\in -K^\circ$, $x, y\in\dom F=\dom \ip{v}{F}$, and $\lambda\in [0,1]$. Since 
$\lambda F(x)+(1-\lambda)F(y)-F(\lambda x+(1-\lambda)y)\in K$,
it follows that
\[
\ip{v}{\lambda F(x)+(1-\lambda)F(y)-F(\lambda x+(1-\lambda)y)}\geq 0,
\]
and hence
\begin{eqnarray*}
\lambda\ip{v}{F}(x)+(1-\lambda)\ip{v}{F}(y) & = & \ip{v}{\lambda F(x)+(1-\lambda)F(y)}\\
&\geq &\ip{v}{F(\lambda x+(1-\lambda)y)}\\
& = & \ip{v}{F}(\lambda x+(1-\lambda)y).
\end{eqnarray*}

\noindent
b) Let $x, y\in\bE_1$, $\lambda\in [0,1]$, and $v\in -K^\circ$. Since $\ip{v}{F}$ is convex we have
\[
\begin{aligned}
0 & \leq \lambda\ip{v}{F}(x)+(1-\lambda)\ip{v}{F}(y)- \ip{v}{F}(\lambda x+(1-\lambda)y)\\
&=\ip{v}{\lambda F(x)+(1-\lambda)F(y)-F(\lambda x+(1-\lambda)y)}\\ 
\end{aligned}
\]
and therefore
\[
\lambda F(x)+(1-\lambda)F(y)-F(\lambda x+(1-\lambda)y)\in -(-K^\circ)^\circ =K 
\]
as   $K$ is closed and convex, see \cite[Theorem~14.1]{Roc 70}.
\end{proof}

\noindent
For the  next result  recall that the {\em indicator function}  of  a  nonempty set $S\subset \bE$  is given by 
\[
\delta_S:x\in\bE\mapsto\left\{\begin{array}{rcl}
0, & \text{if} &    x\in S,
\\ +\infty, & \text{else.} 
\end{array}\right.
\]
Its conjugate is the {\em support function} $\sigma_S:\bE\to \rp$ of $S$  given by
\[
\sigma_S(y)=\delta^*_S(y)=\sup_{x\in S}\ip{x}{y}.
\]

\begin{lemma}\label{lem:Support} Let $F:\bE_1\to \bE_2^\bullet$ and let $K\subset \bE_2$ be a closed, convex cone. Then the following hold for all $(u,v)\in \bE_1\times \bE_2$:
\begin{itemize}
\item[a)]  $\sig_{\Epi{K}{F}}(u,v)=\sig_{\gph F}(u,v)+\delta_{K^\circ}(v)$.
\item[b)]  $\sigma_{\gph F}(u,-v)=\ip{v}{F}^*(u)$.
\item[c)] If $F$ is linear  then $\ip{v}{F}^*=\delta_{\{F^*(v)\}}$.
\end{itemize}
\end{lemma}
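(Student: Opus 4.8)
The plan is to compute the three support/conjugate expressions directly from the definitions, using the fact that $\gph F$ projects onto $\Epi{K}{F}$ "via the cone $K$" in the second coordinate. For part a), I would start from the definition
\[
\sig_{\Epi{K}{F}}(u,v)=\sup\set{\ip{u}{x}+\ip{v}{w}}{x\in\dom F,\ F(x)\leq_K w}.
\]
For fixed $x\in\dom F$, writing $w=F(x)+k$ with $k\in K$, the inner supremum over $w$ becomes $\ip{u}{x}+\ip{v}{F(x)}+\sup_{k\in K}\ip{v}{k}$. Since $K$ is a cone, $\sup_{k\in K}\ip{v}{k}$ equals $0$ if $v\in K^\circ$ and $+\infty$ otherwise; that is exactly $\delta_{K^\circ}(v)$. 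Pulling this ($x$-independent) term out of the supremum over $x\in\dom F$ leaves $\sup_{x\in\dom F}\{\ip{u}{x}+\ip{v}{F(x)}\}=\sig_{\gph F}(u,v)$, which gives the claim — with the usual convention that when $v\notin K^\circ$ both sides are $+\infty$ (here one should note $\dom F\neq\emptyset$ is not even needed for that case, but if $F$ is improper both sides are $-\infty$, so a one-line remark handles the degenerate case).

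For part b), this is essentially the definition unwound:
\[
\sig_{\gph F}(u,-v)=\sup\set{\ip{u}{x}+\ip{-v}{F(x)}}{x\in\dom F}=\sup_{x\in\bE_1}\{\ip{u}{x}-\ip{v}{F}(x)\}=\ip{v}{F}^*(u),
\]
where the middle equality uses that $\ip{v}{F}(x)=+\infty$ precisely off $\dom F$, so extending the supremum to all of $\bE_1$ changes nothing. Part c) then follows by specializing b): when $F$ is linear, $\ip{v}{F}(x)=\ip{v}{F(x)}=\ip{F^*(v)}{x}$ is itself a linear form, so its conjugate $\ip{v}{F}^*(u)=\sup_x\{\ip{u}{x}-\ip{F^*(v)}{x}\}$ is $0$ if $u=F^*(v)$ and $+\infty$ otherwise, i.e.\ $\delta_{\{F^*(v)\}}(u)$. (Alternatively one can observe $\gph F=\rge(\mathrm{id},F)$ is a subspace and compute its support function as the indicator of the orthogonal complement.)

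I do not anticipate a genuine obstacle here; the only thing to be careful about is bookkeeping with the extended-real conventions — making sure the $+\infty$ contributions from $\delta_{K^\circ}$ and from points outside $\dom F$ are tracked consistently, and that the cone property of $K$ (not just convexity) is what forces $\sup_{k\in K}\ip{v}{k}\in\{0,+\infty\}$. Closedness and convexity of $K$ are not actually needed for a) and b) as stated; they are recorded in the hypothesis presumably for uniformity with the surrounding results, and I would simply not invoke them. The whole proof is three short displays.
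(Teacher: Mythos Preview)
Your proposal is correct and essentially identical to the paper's own proof: for a) the paper also substitutes $w=F(x)+z$ with $z\in K$, splits the supremum, and invokes $\sig_K=\delta_{K^\circ}$; parts b) and c) are likewise the same direct unwindings of the definitions. Your side remarks (that closedness/convexity of $K$ are not actually used, and the improper-$F$ edge case) are valid but go slightly beyond what the paper records.
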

\begin{proof} a)  Observe that
\begin{eqnarray*}
\sig_{\Epi{K}{F}}(u,v)& = & \sup_{(x,y)\in \Epi{K}{F}}\ip{(u,v)}{(x,y)}\\
& = & \sup_{(x,z)\in \bE_1\times K}\ip{(u,v)}{(x,F(x)+z)}\\
& = & \sup_{x\in \bE_1}\ip{(u,v)}{(x,F(x))} +\sup_{z\in K}\ip{z}{v}\\
& = & \sig_{\gph F}(u,v)+\delta_{K^\circ}(v),
\end{eqnarray*}
where the last identity uses the well-known fact that $\sig_{K}=\delta^*_{K}=\delta_{K^\circ}$ for a closed, convex cone, see \cite[Example 11.4]{RoW 98}.
\smallskip

\noindent
b) We have 
\begin{eqnarray*}
\sig_{\gph F}(u,-v) & = & \sup_{x\in \dom F}\ip{(u,-v)}{(x,F(x))}\\
& = & \sup_{x\in \bE_1} \{\ip{u}{x}- \ip{v}{F}(x)\}\\
&= & \ip{v}{F}^*(u).
\end{eqnarray*}
\smallskip

\noindent
c) By linearity of $F$ we have 
\[
\ip{v}{F}^*(u)=\sup_{x\in \bE_1} \{\ip{u}{x}-\ip{v}{F(x)}\}=\sup_{x\in \bE_1}\{ \ip{x}{u-F^*(v)}\}=\delta_{\{F^*(v)\}}(u).
\]
\end{proof}

\noindent
The next result gives a  sufficient condition for convexity of a composite function   \eqref{eq:Comp}.

\begin{proposition}\label{prop:composite1} Let   $K\subset \bE_2$ be a  convex cone, $F:\bE_1\to \bE_2^\bullet$ $K$-convex and  $g\in \Gamma(\bE_2)$.  Then the following hold: 
\begin{itemize}
\item[a)]  $g\circ F$ is proper if and only if  $\rge F\cap \dom g\neq \emptyset$;
\item[b)] $g\circ F$ is convex if  
\begin{equation}\label{eq:FKMonotone} 
g(F(x))\leq g(y)\quad ((x,y)\in \Epi{K}{F}).
 \end{equation}
\end{itemize}

\end{proposition}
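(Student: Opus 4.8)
Part a) is essentially a definition-chase. By \eqref{eq:Comp}, $\dom(g\circ F)=\{x\in\dom F:\ F(x)\in\dom g\}$, and this set is nonempty precisely when some value $F(x)$ with $x\in\dom F$ lies in $\dom g$, i.e. when $\rge F\cap\dom g\ne\emptyset$. Since $g\in\Gamma(\bE_2)$ never takes the value $-\infty$ and neither does the $+\infty$-extension in \eqref{eq:Comp}, properness of $g\circ F$ is equivalent to $\dom(g\circ F)\ne\emptyset$, which gives the claim. I would dispatch this in one or two lines.

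\textbf{Proof plan, part b).} The strategy is to verify convexity directly from the definition, using the scalarization toolkit of Theorem \ref{thm:K convexity} only implicitly (in fact, monotonicity along $K$ is exactly what \eqref{eq:FKMonotone} encodes). First I would fix $x_0,x_1\in\dom(g\circ F)$ and $\lambda\in[0,1]$, and set $x_\lambda:=(1-\lambda)x_0+\lambda x_1$; since $\dom F$ is convex ($F$ being $K$-convex, as noted after Definition \ref{def:KConv}) and $g$ is defined on all of $\bE_2$, we have $x_\lambda\in\dom F$, so $(g\circ F)(x_\lambda)=g(F(x_\lambda))$. The key point is the chain
\[
g(F(x_\lambda))\ \le\ g\big((1-\lambda)F(x_0)+\lambda F(x_1)\big)\ \le\ (1-\lambda)g(F(x_0))+\lambda g(F(x_1)),
\]
where the first inequality applies hypothesis \eqref{eq:FKMonotone} to the pair $(x_\lambda,\,(1-\lambda)F(x_0)+\lambda F(x_1))$, which lies in $\Epi{K}{F}$ precisely because $F$ is $K$-convex, and the second inequality is ordinary convexity of $g$. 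This shows $g\circ F$ is convex on $\dom(g\circ F)$; combined with the convention that it equals $+\infty$ off $\dom F$ and with properness handled in a), it follows that $\epi(g\circ F)$ is convex, i.e. $g\circ F\in\Gamma(\bE_2)$ when it is proper (and is convex as an extended-real-valued function in any case).

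\textbf{Main obstacle.} There is no serious obstacle here; the only point requiring a moment of care is the bookkeeping around the domain and the $+\infty_\bullet$ convention — one must be sure that $x_\lambda\in\dom F$ before writing $g(F(x_\lambda))$, which is why $K$-convexity of $F$ (hence convexity of $\dom F$) is invoked first, and one must check that the pair $(x_\lambda,(1-\lambda)F(x_0)+\lambda F(x_1))$ genuinely belongs to $\Epi{K}{F}$ so that \eqref{eq:FKMonotone} is applicable. Both are immediate from $K$-convexity of $F$ via the characterization $F(x_\lambda)\le_K(1-\lambda)F(x_0)+\lambda F(x_1)$ recorded right after Definition \ref{def:KConv}. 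I would also remark that, strictly speaking, to conclude $g\circ F\in\Gamma$ one needs properness, so part b) is naturally read together with part a); alternatively one states b) as ``$g\circ F$ is a convex function (possibly improper only in the trivial sense of being identically $+\infty$)''.
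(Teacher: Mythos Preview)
Your proposal is correct and follows essentially the same approach as the paper: part a) is dismissed as clear (a domain chase), and part b) is proved via the same two-step chain $g(F(x_\lambda))\le g((1-\lambda)F(x_0)+\lambda F(x_1))\le (1-\lambda)g(F(x_0))+\lambda g(F(x_1))$, the first inequality coming from \eqref{eq:FKMonotone} applied to the pair $(x_\lambda,(1-\lambda)F(x_0)+\lambda F(x_1))\in\Epi{K}{F}$ and the second from convexity of $g$. One cosmetic slip: you write $g\circ F\in\Gamma(\bE_2)$ where it should be $\Gamma(\bE_1)$.
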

\begin{proof} a) Clear. 
\smallskip

\noindent
b) Let $v,w\in \dom F$ and $\lambda \in [0,1]$. Then 
\begin{eqnarray*}
 g(F(\lambda v+(1-\lambda)w))
& \leq & g(\lambda F( v)+(1-\lambda) F(w))\\
& \leq  &  \lambda g(F(v))+(1-\lambda) g(F(w)).
\end{eqnarray*}
Here the first inequality is due to the fact that $(\lambda v+(1-\lambda)w,\lambda F( v)+(1-\lambda) F(w))\in \Epi{K}{F} $ (as $F$ is $K$-convex) and 
assumption \eqref{eq:FKMonotone}. 
%
\end{proof}

\noindent
We point out that the property \eqref{eq:FKMonotone} used to establish convexity of the composite function $g\circ F$, and which we borrowed from \cite{Pen 99}, is clearly weaker  than saying that $g$ be {\em $K$-increasing}, i.e. 
\begin{equation}\label{eq:FKMonotone2} 
g(u)\leq g(v)\quad (u\leq_K v),
 \end{equation}
 cf. Example \ref{ex:CQandMontone} below.  However, in Lemma \ref{lem:HorizonIncrease} it will be shown that any closed, proper, convex function is  increasing with respect to its own negative horizon cone.  
  Condition \eqref{eq:FKMonotone2}   has some interesting consequences.
 
 \begin{lemma}\label{lem:KIncreas} Let $g:\bE_2\to \rp$ satisfy \eqref{eq:FKMonotone2} for some cone $K\subset \bE_2$. Then $\dom g-K=\dom g$. 
 \end{lemma}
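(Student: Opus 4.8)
The plan is to prove the set equality $\dom g - K = \dom g$ by establishing both inclusions, of which only one is nontrivial. First I would note the easy inclusion $\dom g \subset \dom g - K$: since $K$ is a cone we have $0 \in K$ (taking $\lambda = 0$ with any $x \in K$, or noting $K$ nonempty as it is a cone in the stated sense), hence for any $x \in \dom g$ we can write $x = x - 0 \in \dom g - K$. Conversely, to show $\dom g - K \subset \dom g$, take an arbitrary point of the form $x - k$ with $x \in \dom g$ and $k \in K$. Then $x - k \leq_K x$ by definition of the ordering, since $x - (x-k) = k \in K$. Applying the monotonicity hypothesis \eqref{eq:FKMonotone2} gives $g(x-k) \leq g(x) < +\infty$, so $x - k \in \dom g$.

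There is one subtlety I would want to be careful about: the hypothesis \eqref{eq:FKMonotone2} is stated as $g(u) \leq g(v)$ whenever $u \leq_K v$, and one should check this is being applied to genuinely comparable elements with $g(v)$ finite so that the inequality delivers $g(u) < +\infty$. In our application $v = x \in \dom g$ so $g(x)$ is finite (recall $g \in \Gamma_0$ or at least proper convex — actually the statement only requires $g \colon \bE_2 \to \rp$, so $g$ never takes $-\infty$ and $g(x) < +\infty$ exactly means $x \in \dom g$), and $u = x - k$, so the argument is clean. No convexity or closedness of $g$ or $K$ is actually needed here; the cone property of $K$ (to get $0 \in K$) and the monotonicity relation are the only ingredients.

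I do not anticipate a real obstacle — this is a one-line consequence of unwinding definitions. If anything, the only thing to watch is the degenerate case where $\dom g = \emptyset$ (i.e. $g$ improper), in which case both sides are empty and the identity holds trivially; one might add a parenthetical remark or simply let the set-algebra argument cover it automatically, since $\emptyset - K = \emptyset$. So the proof I would write is essentially: "$\supset$" because $0 \in K$; "$\subset$" because $x - k \leq_K x$ and \eqref{eq:FKMonotone2} forces $g(x-k) \leq g(x) < +\infty$.
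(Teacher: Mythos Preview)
Your proposal is correct and follows essentially the same approach as the paper: both establish $\dom g\subset\dom g-K$ from $0\in K$ and the reverse inclusion by applying \eqref{eq:FKMonotone2} to $u-v\leq_K u$ for $u\in\dom g$, $v\in K$. Your additional remarks on properness and the degenerate case $\dom g=\emptyset$ are fine but not needed for the argument.
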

\begin{proof} 
As $0\in K$,  we clearly have $\dom g-K\supset\dom g$.  In turn, if  $u\in\dom g$ and $v\in K$, by 
\eqref{eq:FKMonotone2}, we have $g(u-v)\leq g(u)$, which yields $u-v\in\dom g$.
\end{proof}

\noindent
We now investigate under which conditions we can also get closedness of the composite function. 
 To this end, we first make an elementary observation.

\begin{lemma}\label{lem:gConvInd} Let $K\subset \bE$ be a cone  and let  $g:\bE\to \rbar$ be $K$-increasing in the sense of \eqref{eq:FKMonotone2}. Then $g=g\Box\delta_{-K}$.
\end{lemma}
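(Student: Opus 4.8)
The plan is to prove the identity $g = g\Box\delta_{-K}$ by unravelling both sides at an arbitrary point $x\in\bE$ and using the defining monotonicity property \eqref{eq:FKMonotone2} together with the fact that $0\in K$. By definition of the infimal convolution,
\[
(g\Box\delta_{-K})(x) = \inf_{y\in\bE}\{g(y)+\delta_{-K}(x-y)\} = \inf\set{g(y)}{x-y\in -K} = \inf\set{g(y)}{y-x\in K} = \inf\set{g(x+v)}{v\in K}.
\]
So the claim reduces to showing $g(x) = \inf_{v\in K} g(x+v)$ for every $x\in\bE$.

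First I would establish the inequality $(g\Box\delta_{-K})(x)\le g(x)$: this is immediate by taking $y=x$ (equivalently $v=0\in K$) in the infimum, since $\delta_{-K}(0)=0$. For the reverse inequality, I would argue that every admissible choice $y = x+v$ with $v\in K$ satisfies $g(x)\le g(x+v)$. Indeed, $x\le_K x+v$ because $(x+v)-x = v\in K$, so \eqref{eq:FKMonotone2} gives $g(x)\le g(x+v)$. Taking the infimum over all $v\in K$ yields $g(x)\le(g\Box\delta_{-K})(x)$. Combining the two inequalities gives equality at every $x$, hence $g = g\Box\delta_{-K}$.

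There is no real obstacle here; the statement is essentially a reformulation of $K$-monotonicity in the language of infimal convolution, and the proof is a two-line sandwich argument. The only mild point worth noting is the handling of the extended-real-valued arithmetic: if $g\equiv+\infty$ or if $g$ takes the value $-\infty$ somewhere, one should check that the infimum manipulations remain valid, but since $g$ maps into $\rp$ and the inequalities above hold in $\rbar$ regardless, nothing goes wrong. I would present the proof exactly as the short sandwich above, perhaps inlining the rewriting of $\delta_{-K}(x-y)$ as the constraint $y-x\in K$ so the reader sees the connection to the ordering $\le_K$ directly.
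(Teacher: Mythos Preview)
Your proof is correct and follows essentially the same approach as the paper: both rewrite $(g\Box\delta_{-K})(x)$ as $\inf_{v\in K} g(x+v)$ and then use $0\in K$ together with the $K$-increasing property \eqref{eq:FKMonotone2} to conclude equality with $g(x)$. The paper simply compresses your two-sided sandwich into the single line ``$\inf_{u\in K}g(v+u)=g(v)$'' with a reference to \eqref{eq:FKMonotone2}.
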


\begin{proof} For all $v\in \bE$ we have
\[
(g\Box\delta_{-K})(v)
=\inf_{u\in\bE}\{g(v-u)+\delta_{-K}(u)\}
=\inf_{u\in K}g(v+u)
=g(v),
\]
where the last equality follows from  \eqref{eq:FKMonotone2}.
\end{proof}

\noindent
The next result is a key observation which will be used multiple times in our study, but is also interesting in its own right.

\begin{lemma}\label{lem:psi}   Let   $K\subset \bE_2$ be a closed, convex cone, let $F\colon\bE_1\to \bE_2^\bullet$ be such that $\set{\ip{v}{F}}{v\in -K^\circ}\subset\Gamma_0(\bE_1)$, and let $g\in \Gamma_0(\bE_2)$ be {\em $K$-increasing}. Define $\psi:\bE_1\times \bE_2\to\rp$, 
\begin{equation} 
\label{eq:psi} \psi(p,v):=g^*(v)+\sigma_{\Epi{K}{F}}(p,-v).
\end{equation}
Then $\psi^*(x,u)=g(u+F(x))$.  
\end{lemma}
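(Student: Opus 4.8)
I want to show that the conjugate of $\psi(p,v)=g^*(v)+\sigma_{\Epi{K}{F}}(p,-v)$, taken in the pair of variables $(p,v)$, equals $(x,u)\mapsto g(u+F(x))$. The natural strategy is to recognize $\psi$ as (essentially) a sum of two convex functions in $(p,v)$ and apply the conjugacy-of-sums duality from Theorem~\ref{th:AttBre}, after which the conjugate of $\psi$ becomes an infimal convolution that I can evaluate by hand. Concretely, write $\psi = \psi_1 + \psi_2$ where $\psi_1(p,v) := g^*(v)$ (depending only on $v$) and $\psi_2(p,v) := \sigma_{\Epi{K}{F}}(p,-v)$. Both are closed, proper, convex: $\psi_1$ because $g^*\in\Gamma_0(\bE_2)$, and $\psi_2$ because it is (a linear reparametrization of) the support function of the convex set $\Epi{K}{F}$, which is convex since $F$ is $K$-convex under the scalarization hypothesis $\set{\ip{v}{F}}{v\in-K^\circ}\subset\Gamma_0(\bE_1)$ together with Theorem~\ref{thm:K convexity}(b).

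\emph{Computing the two conjugates.} For $\psi_1$: since it ignores $p$, its conjugate is $\psi_1^*(x,u)=\delta_{\{0\}}(x)+g^{**}(u)=\delta_{\{0\}}(x)+g(u)$, using $g=g^{**}$ as $g\in\Gamma_0(\bE_2)$. For $\psi_2$: the support function $\sigma_{\Epi{K}{F}}$ is the conjugate of $\delta_{\Epi{K}{F}}$, so $\sigma_{\Epi{K}{F}}^*=\delta_{\Epi{K}{F}}$ (since $\Epi{K}{F}$ is closed convex — here I should note that under the hypotheses $\Epi{K}{F}$ is in fact closed, because each $\ip{v}{F}$ is lsc and the $K$-epigraph is an intersection of sublevel-type sets; alternatively invoke Lemma~\ref{lem:Support}(a) to see $\sigma_{\Epi K F}$ is a sum of lsc convex functions, hence the relevant closure is automatic). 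Accounting for the sign flip in the $v$-slot, $\psi_2(p,v)=\sigma_{\Epi K F}(p,-v)$, so a short computation gives $\psi_2^*(x,u)=\delta_{\Epi K F}(x,-u)$, i.e. $\psi_2^*(x,u)=0$ if $F(x)\le_K -u$ and $+\infty$ otherwise.

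\emph{Assembling via infimal convolution.} By Theorem~\ref{th:AttBre}, $\psi^*=(\psi_1+\psi_2)^*=\cl(\psi_1^*\,\Box\,\psi_2^*)$, and the closure is superfluous once $\ri(\dom\psi_1)\cap\ri(\dom\psi_2)\neq\emptyset$. Then
\[
\psi^*(x,u)=\inf_{(x',u')}\;\bigl[\delta_{\{0\}}(x-x')+g(u-u')+\delta_{\Epi K F}(x',-u')\bigr]
=\inf\set{g(u-u')}{F(x)\le_K -u'},
\]
and since $g$ is $K$-increasing, $F(x)\le_K -u'$ forces $g(u+F(x))\le g(u-u')$, with equality attained at $u'=-F(x)$; hence the infimum equals $g(u+F(x))$, as claimed.

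\emph{Main obstacle.} The delicate point is the interplay between the closure operation in Theorem~\ref{th:AttBre} and the relative-interior qualification: I need either to verify $\ri(\dom\psi_1)\cap\ri(\dom\psi_2)\neq\emptyset$ — which reduces, via Lemma~\ref{lem:Support}(a) and the formula $\dom\psi_2 = \dom\sigma_{\gph F}\cap(\bE_1\times(-K^\circ))$ relative to the $v$-variable together with $\dom g^* - K^\circ = \dom g^*$ type observations — or to argue directly that $\psi_1^*\Box\psi_2^*$ is already closed (which it is, since the infimum is exactly attained at $u'=-F(x)$ and produces the lsc function $g(u+F(x))$, $g$ being lsc and $F$ having lsc scalarizations). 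I expect the cleanest route is to sidestep the qualification entirely: show by direct computation that $\psi^* \le g(\,\cdot\, + F(\,\cdot\,))$ is automatic from Fenchel--Young, and that $\psi^{**}$-type reasoning or a direct two-sided inequality closes the gap, so that no relative-interior hypothesis on $\psi$ is needed — which matches the paper's stated goal of avoiding lsc and monotonicity overhead. Verifying that $\Epi{K}{F}$ is closed under the stated scalarization hypothesis (rather than assuming it) is the other routine-but-necessary check.
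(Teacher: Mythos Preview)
Your decomposition $\psi=\psi_1+\psi_2$ and the inf-conv route is a legitimate alternative, and your computations of $\psi_1^*$, $\psi_2^*$, and $\psi_1^*\Box\psi_2^*=g(\cdot+F(\cdot))$ are correct (the closedness of $\Epi{K}{F}$ that you need for $\psi_2^*=\delta_{\Epi{K}{F}}(\cdot,-\cdot)$ does follow from the scalarization hypothesis, exactly as you suspect). The genuine gap is precisely where you flag it: removing the closure in $\psi^*=\cl(\psi_1^*\Box\psi_2^*)$. Your assertion that $(x,u)\mapsto g(u+F(x))$ is lsc ``because $g$ is lsc and $F$ has lsc scalarizations'' is true but is not yet an argument; and the relative-interior qualification $\ri(\dom\psi_1)\cap\ri(\dom\psi_2)\neq\emptyset$ is not obviously satisfied in general and is awkward to verify here. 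The clean way to close your gap is to note that, since $g$ is $K$-increasing, conjugating $g=g\Box\delta_{-K}$ (Lemma~\ref{lem:gConvInd}) gives $\dom g^*\subset -K^\circ$, whence
\[
g(u+F(x))=\sup_{v\in\dom g^*}\bigl\{\ip{v}{u}+\ip{v}{F}(x)-g^*(v)\bigr\}
\]
is a pointwise supremum of functions that are lsc in $(x,u)$ by the hypothesis $\ip{v}{F}\in\Gamma_0$ for $v\in -K^\circ$.

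The paper sidesteps the closure issue by computing $\psi^*$ directly: it rewrites $\sigma_{\Epi{K}{F}}(p,-v)=\ip{v}{F}^*(p)+\delta_{-K^\circ}(v)$ via Lemma~\ref{lem:Support}, takes the supremum over $p$ first (returning $\ip{v}{F}^{**}=\ip{v}{F}$), then over $v$, arriving at $(g^*+\delta_{-K^\circ})^*(u+F(x))$, which equals $g(u+F(x))$ by Lemma~\ref{lem:gConvInd} and $g=g^{**}$. Observe that the sup-representation displayed above is exactly this computation read backwards, so once you actually fill the gap your argument and the paper's coincide at the decisive step. The paper's ordering is more economical because the only closure it handles is that of $g$ itself (which is assumed), rather than of the bivariate function $g(u+F(x))$.
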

\begin{proof} We have
\[
\begin{aligned}
\psi^*(x,u)&=\sup_{(p,v)\in\bE_1\times\bE_2}\{\ip{x}{p}+\ip{u}{v}-\psi(p,v)\}\\
&=\sup_{(p,v)\in\bE_1\times\bE_2}\big\{\ip{x}{p}+\ip{u}{v}-g^*(v)-\ip{v}{F}^*(p)-\delta_{-K^\circ}(v)\big\}\\
&=\sup_{v\in\bE_2}\big\{\ip{u}{v}-g^*(v)-\delta_{-K^\circ}(v) +\sup_{p\in\bE_1}\{\ip{x}{p}-\ip{v}{F}^*(p)\}\big\}\\
&=\sup_{v\in\bE_2}\{\ip{u}{v}-g^*(v)-\delta_{-K^\circ}(v) +\ip{v}{F}(x)\}\\
&=\sup_{v\in\bE_2}\{\ip{u+F(x)}{v}-g^*(v)-\delta_{-K^\circ}(v)\}\\
&=(g^*+\delta_{-K^\circ})^*(u+F(x))\\
&=(g\Box\delta_{-K})^{**}(u+F(x))\\
&=g(u+F(x)).
\end{aligned}
\]
Here the second identity  comes from Lemma \ref{lem:Support} a), the fourth  uses the fact that  $\ip{v}{F}^{**}=\ip{v}{F}\;(v\in -K^\circ)$ by assumption. The last before last identity   employs  Theorem \ref{th:AttBre} and the fact that $(g\Box\delta_{-K})^{**}=\cl(g\Box \delta_{-K})$.   The last equality follows from  Lemma \ref{lem:gConvInd} and using  $g^{**}=g$.   
\end{proof}

\noindent
We close out this section with sufficient conditions for closedness (and convexity) of the composite functions from \eqref{eq:Comp}.

\begin{corollary}\label{cor:composite2} Under the assumptions of  Lemma \ref{lem:psi} the composite function $g\circ F$ is closed and convex. In particular, if, in addition, $\rge F\cap \dom g\neq \emptyset$, then  $g\circ F\in \Gamma_0(\bE_1)$.
\end{corollary}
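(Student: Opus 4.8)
The plan is to show that $g \circ F$ agrees with the biconjugate $\psi^{**}$ evaluated along a suitable slice, and then invoke the properties of $\psi$ established in Lemma \ref{lem:psi}. Specifically, setting $u = 0$ in the conclusion $\psi^*(x,u) = g(u + F(x))$ of Lemma \ref{lem:psi}, we will obtain a formula that relates $g \circ F$ to a function built from $\psi$. The key structural fact is that $\psi$ is a proper convex function (indeed $\psi$ as defined in \eqref{eq:psi} is a sum of $g^* \in \Gamma_0(\bE_2)$ and the support function $\sigma_{\Epi{K}{F}}$, which is closed and convex as a support function), so its conjugate $\psi^*$ belongs to $\Gamma_0(\bE_1 \times \bE_2)$ provided $\psi$ is proper.

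First I would record that $\psi$ is proper: properness of $g^*$ follows since $g \in \Gamma_0(\bE_2)$ implies $g^* \in \Gamma_0(\bE_2)$, and the support function of the nonempty set $\Epi{K}{F}$ (nonempty because $F$ is proper, which is implicit from the hypothesis $\set{\ip{v}{F}}{v \in -K^\circ} \subset \Gamma_0(\bE_1)$) never takes the value $-\infty$; one checks the two functions share a point where both are finite using that $\dom g^* \neq \emptyset$ and the $K$-increasing property forces $\dom g^* \subset -K^\circ$ via Lemma \ref{lem:KIncreas} (or directly), so on $\dom g^*$ the term $\delta_{-K^\circ}$ in the expanded form vanishes. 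Hence $\psi \in \Gamma(\bE_1 \times \bE_2)$ and $\psi^* = \psi^{**\,*}$ with $\psi^{**} = \cl\psi$. Since $\psi^* \in \Gamma_0$, the function $x \mapsto \psi^*(x, 0) = g(F(x))$ is closed and convex, being the restriction of a closed convex function to the affine subspace $\bE_1 \times \{0\}$; restriction to an affine subspace preserves closedness and convexity. This gives $g \circ F$ closed and convex.

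Then I would handle the "in particular" clause: under the additional hypothesis $\rge F \cap \dom g \neq \emptyset$, Proposition \ref{prop:composite1}(a) gives that $g \circ F$ is proper (a $K$-increasing $g$ certainly satisfies the weaker condition \eqref{eq:FKMonotone} of Proposition \ref{prop:composite1}(b), though here we only need part (a) for properness, combined with the closedness and convexity just established). Combining properness with closedness and convexity yields $g \circ F \in \Gamma_0(\bE_1)$ by definition of $\Gamma_0$.

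The main obstacle I anticipate is the bookkeeping around properness of $\psi$ — specifically making sure $\psi$ does not take the value $+\infty$ identically and never takes $-\infty$, since only then does the conjugacy machinery ($\psi^* \in \Gamma_0$) apply cleanly. This hinges on correctly identifying that $\dom g^* \subset -K^\circ$, which follows from $g$ being $K$-increasing (if $v \notin -K^\circ$ there is $k \in K$ with $\ip{v}{k} > 0$, and then $g^*(v) \geq \sup_{t \geq 0}\{\ip{v}{y_0 + tk} - g(y_0 + tk)\} \geq \sup_{t\geq 0}\{\ip{v}{y_0} + t\ip{v}{k} - g(y_0)\} = +\infty$ for any $y_0 \in \dom g$, using $g(y_0 + tk) \leq g(y_0)$), so that picking $v_0 \in \dom g^*$ gives $\psi(p, v_0) = g^*(v_0) + \sigma_{\gph F}(p, -v_0) < +\infty$ for $p \in \dom \ip{v_0}{F}^* \neq \emptyset$ by Lemma \ref{lem:Support}(b) and the hypothesis $\ip{v_0}{F} \in \Gamma_0(\bE_1)$. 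Everything else is a direct appeal to Lemma \ref{lem:psi} and elementary stability properties of $\Gamma_0$ under affine restriction.
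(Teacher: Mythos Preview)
Your proposal is correct and follows the same route as the paper: identify $g\circ F=\psi^*(\cdot,0)$ via Lemma~\ref{lem:psi}, use that a conjugate is closed and convex, and invoke the additional hypothesis (Proposition~\ref{prop:composite1}(a)) for properness. Your careful verification that $\psi$ is proper is unnecessary, since the conjugate of \emph{any} function is automatically closed and convex; the paper's proof accordingly omits it.
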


\begin{proof}   By Lemma \ref{lem:psi}  we have $g\circ F=\psi^*(\cdot, 0)$, which is closed and convex.  The properness is exactly the additional assumption.
\end{proof}

\noindent
We point out that Corollary \ref{cor:composite2} generalizes the well-established scalar case as in e.g. \cite[Proposition 2.1.7, Chapter B]{HUL 01}.

\section{Convex analysis of convex convex-composite functions} \label{sec:Main}

\noindent
We commence  with the main result of this section. Throughout,  we use the standing assumption that the function $F:\bE_1\to \bE_2^\bullet$ in question is proper, i.e. $\dom F\neq \emptyset$.

\begin{theorem}[Conjugate of composite function]\label{th:ConjMain} Let  $K\subset \bE_2$ be a closed, convex cone, $F:\bE_1\to \bE_2^\bullet$ $K$-convex and $g\in \Gamma(\bE_2)$ such that \eqref{eq:FKMonotone} is satisfied.
  Then the following hold:
 \begin{itemize}
\item[a)]   $(g\circ F)^*\leq \cl \eta$, where 
\[
\eta(p)=\inf_{v\in -K^\circ} g^*(v)+\ip{v}{F}^*(p).
\]
Equality  holds if, in addition, $\ip{v}{F}\in\Gamma_0(\bE_2)$ for all $v\in-K^\circ$, $g$ is lower semicontinuous and $K$-increasing, and $\rge F\cap \dom g\not=\emptyset$.
\item [b)] If
\begin{equation}\label{eq:CQ1}
 F(\ri (\dom F))\cap \ri(\dom g-K)\neq \emptyset. 
\end{equation}
the function $\eta$ in a) is closed, proper and convex  and the infimum is a minimum (possibly $+\infty$).   
\item[c)] Under the assumptions of b)  we have 
\[
(g\circ F)^*(p)=\min_{v\in -K^\circ} g^*(v)+\ip{v}{F}^*(p)
\]
with $\dom (g\circ F)^*=\set{p\in \bE_1}{\exists v\in \dom g^*\cap (-K^\circ): \ip{v}{F}^*(p)<+\infty}$. 
\end{itemize}
\end{theorem}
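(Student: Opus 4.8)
The plan is to derive everything from the scalarization identity in Lemma~\ref{lem:psi} together with the conjugacy/infimal-convolution duality in Theorem~\ref{th:AttBre}, applied to a carefully chosen pair of functions on $\bE_1\times\bE_2$. First, observe that $g\circ F = (g\circ F)(\cdot)$ equals the partial function $x\mapsto g(F(x)) = g(0 + F(x))$, so that if $\psi$ is the function in \eqref{eq:psi} then $g\circ F = \psi^*(\cdot, 0)$ by Lemma~\ref{lem:psi} (under the hypotheses there). Computing $\psi^*(\cdot,0)$ directly reproduces the candidate formula: indeed, $\psi = \phi_1 + \phi_2$ where $\phi_1(p,v) := g^*(v) + \delta_{-K^\circ}(v)$ and $\phi_2(p,v) := \sigma_{\gph F}(p,-v)=\ip{v}{F}^*(p)$ (using Lemma~\ref{lem:Support}(a),(b)); taking $(\phi_1+\phi_2)^*$ via Theorem~\ref{th:AttBre} yields $\phi_1^* \Box \phi_2^*$, and evaluating at $(x,0)$ with $x$ the free variable gives, after the dust settles, exactly $\inf_{v\in -K^\circ}\{g^*(v) + \ip{v}{F}^*(p)\}$ as the conjugate $(g\circ F)^*(p)$. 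For part~a) the inequality $(g\circ F)^* \le \cl\eta$ comes from the general half of Theorem~\ref{th:AttBre} (closures on both sides), while equality requires the full strength of Lemma~\ref{lem:psi}, hence the extra lower semicontinuity, $K$-increasing, and properness hypotheses.

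For part~b), the point is to show that under the Slater-type condition \eqref{eq:CQ1} the closure in $\cl\eta$ is superfluous and the infimum is attained. The natural route is to identify $\eta$ itself as a conjugate of a linear image or an infimal projection: writing $\eta(p) = \inf_{v}\{\,\ip{v}{F}^*(p) + (g^* + \delta_{-K^\circ})(v)\,\}$, this is the infimal convolution / marginal function associated to the sum $\psi$ above, and Theorem~\ref{th:AttBre}'s relative-interior condition $\ri(\dom\phi_1)\cap\ri(\dom\phi_2)\ne\emptyset$ is what makes the infimum a minimum with no closure needed. The main work is therefore to translate the abstract relative-interior condition on $\dom\phi_1$ and $\dom\phi_2$ in $\bE_1\times\bE_2$ into the verifiable, point-based condition \eqref{eq:CQ1} stated in terms of $F$, $\dom F$, $\dom g$ and $K$. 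Here one uses: $\dom\phi_1 = \bE_1 \times (\dom g^* \cap (-K^\circ))$; $\dom\phi_2 = \dom \sigma_{\gph F}(\cdot,-\cdot)$, whose relative interior must be computed using the $K$-epigraph description $\sigma_{\Epi{K}{F}} = \sigma_{\gph F} + \delta_{K^\circ}$ from Lemma~\ref{lem:Support}(a) together with the relative-interior formula for $\Epi{K}{F}$ in Lemma~\ref{lem:RiKEpi}; and the standard fact that $\ri(\dom h^*)$ relates to the barrier/horizon structure of $h = g + \delta_{(\cdot)\le_K}$ — concretely one should recognize $g^* + \delta_{-K^\circ} = (g\Box\delta_{-K})^*$ via Lemma~\ref{lem:gConvInd}, so that $\dom(g^*+\delta_{-K^\circ}) = \dom(g\Box\delta_{-K})^* $ and $\ri$ of this domain is governed by $\dom g - K$. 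Part~c) is then just a restatement of b) combined with the equality case of a), and reading off $\dom(g\circ F)^*$ from the formula (it is nonempty exactly when some $v\in\dom g^*\cap(-K^\circ)$ has $\ip{v}{F}^*(p)<+\infty$).

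The step I expect to be the main obstacle is the relative-interior bookkeeping in part~b): showing precisely that the geometric condition \eqref{eq:CQ1} — stated on the primal side, in $\bE_2$ — is equivalent to (or at least implies) the condition $\ri(\dom\phi_1)\cap\ri(\dom\phi_2)\ne\emptyset$ needed to invoke Theorem~\ref{th:AttBre} without closures. This requires carefully computing $\ri(\dom\phi_2)$, i.e. the relative interior of the set of $(p,v)$ for which $\ip{v}{F}^*(p)<\infty$, which is a projection of (minus a reflection of) $\gph F$'s epigraphical data; the delicate point is that $\sigma_{\gph F}(p,-v)<\infty$ need not have an easily described domain unless one goes through $\Epi{K}{F}$ and its relative interior (Lemma~\ref{lem:RiKEpi}), and one must be vigilant that \eqref{eq:FKMonotone} (rather than full $K$-increasingness) is all that is assumed in parts a)(first half)–b). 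Everything else — the scalarization identity, the conjugate computation, the attainment of the minimum once the CQ holds — follows mechanically from the already-established lemmas.
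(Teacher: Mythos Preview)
Your plan has a real gap in part~b), stemming from applying Theorem~\ref{th:AttBre} on the wrong side of the duality. You decompose $\psi=\phi_1+\phi_2$ with $\phi_1(p,v)=g^*(v)+\delta_{-K^\circ}(v)$ and $\phi_2(p,v)=\ip{v}{F}^*(p)$, and propose to invoke the condition $\ri(\dom\phi_1)\cap\ri(\dom\phi_2)\ne\emptyset$. But Theorem~\ref{th:AttBre} applied to this pair yields $(\phi_1+\phi_2)^*=\phi_1^*\Box\phi_2^*$ with the infimum in \emph{that} infimal convolution attained---a statement about $\psi^*$, not about the marginal $\eta(p)=\inf_v\psi(p,v)$. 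The infimum you need attained is the one over $v$ in the definition of $\eta$; the infimum Theorem~\ref{th:AttBre} controls is over the conjugate variables in $\phi_1^*\Box\phi_2^*$. These are unrelated. (The same confusion already surfaces in your treatment of a): computing $(\phi_1+\phi_2)^*$ and evaluating at $(x,0)$ gives $\psi^*(x,0)=g(F(x))$ by Lemma~\ref{lem:psi}, not $(g\circ F)^*(p)$.) Moreover, the dual CQ you write down lives in $(p,v)$-space and has no transparent link to the primal, point-based condition \eqref{eq:CQ1}; your own caveat that $\sigma_{\gph F}(p,-v)<\infty$ has no easily described domain is a symptom of this mismatch, not merely a bookkeeping nuisance.

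The paper instead works on the \emph{primal} side. It sets $\phi(x,y):=g(y)$ and uses \eqref{eq:FKMonotone} to obtain the exact identity $(g\circ F)^*(p)=(\phi+\delta_{\Epi{K}{F}})^*(p,0)$; this is where the weak monotonicity condition \eqref{eq:FKMonotone} (rather than full $K$-increasingness) enters. Theorem~\ref{th:AttBre} is then applied to the pair $(\phi,\delta_{\Epi{K}{F}})$: the general half gives the inequality in a), while the CQ $\ri(\dom\phi)\cap\ri(\Epi{K}{F})\ne\emptyset$ translates, via Lemma~\ref{lem:RiKEpi}, directly into \eqref{eq:CQ1}. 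Under this CQ the conjugate equals $(\phi^*\Box\sigma_{\Epi{K}{F}})(p,0)$ with the infimal-convolution minimum attained, and Lemma~\ref{lem:Support} identifies this slice with $\eta(p)$, the minimum over $v\in-K^\circ$ now attained. Your route via $\psi$ and Lemma~\ref{lem:psi} is exactly what the paper uses for the \emph{equality} case in a), but for the inequality and for b)--c) you must switch to the primal pair $(\phi,\delta_{\Epi{K}{F}})$.
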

\begin{proof}

a)  Define $\phi\colon\bE_1\times \bE_2\to\rp\colon (x,y)\mapsto g(y)$ and observe that 
\begin{equation}\label{eq:ConjPhi}
\phi^*(u,v)=\delta_{\{0\}}(u)+g^*(v).
\end{equation}
Hence  we find that
\begin{equation}
\label{eq:2019_05_08_1}
\begin{aligned}
(g\circ F)^*(p) & =  \sup_{x\in \bE_1} \left\{  \ip{x}{p} -g(F(x)) \right\}\\
& =  \sup_{(x,y)\in \Epi{K}{F}} \left\{ \ip{x}{p}-g(y) \right\}\\
& =  \sup_{(x,y)\in \bE_1\times \bE_2} \left\{ \ip{(p,0)}{(x,y)}-(g(y)+\delta_{\Epi{K}{F}}(x,y) \right\}\\
& = (\phi+\delta_{\Epi{K}{F}})^*(p,0) \\
&\leq   (\cl \phi+\cl\delta_{\Epi{K}{F}})^*(p,0) \\
& =  \cl\left( \phi^*\Box \sig_{\Epi{K}{F}}\right)(p,0).
\end{aligned}
\end{equation}
Here the second equality uses assumption \eqref{eq:FKMonotone}, and the inequality is due to the fact that conjugation is order-reversing.  The last identity is then due to Theorem \ref{th:AttBre}
as the functions in play are proper and convex by assumption. 
Moreover, we have
\begin{equation}
\label{eq:2019_05_14_01}
\begin{aligned}
\left(\phi^*\Box \sig_{\Epi{K}{F}}\right)(p,0)
& =  \inf_{(u,v)\in\bE_1\times \bE_2}\left\{\phi^*(u,v)+\sig_{\Epi{K}{F}}(p-u,-v)\right\}\\
& =  \inf_{v\in \bE_2} \left\{g^*(v) + \sig_{\Epi{K}{F}}(p,-v)\right\}\\
& =  \inf_{v\in -K^\circ}  \left\{ g^*(v) + \sig_{\gph F}(p,-v)\right\}\\
& =  \inf_{v\in -K^{\circ}} \left\{g^*(v)+ \ip{v}{F}^*(p) \right\},
\end{aligned}
\end{equation}
where the second identity uses \eqref{eq:ConjPhi} and the third and fourth  rely on Lemma~\ref{lem:Support}. 
This shows the first desired statement. 

To show that equality holds under the additional assumptions  observe that with the function $\psi$ in \eqref{eq:psi}, we have $\eta=\inf_{v}\psi(\cdot,v )$. Hence $\eta$ is convex and, by Lemma \ref{lem:psi}, it holds that
$
\eta^*=\psi^*(\cdot,0)=g\circ F.
$
As $g\circ F$ is closed, proper, convex  by Corollary \ref{cor:composite2}, $\eta$ is convex and proper, and so
$
\cl \eta =\eta^{**}=(g\circ F)^*,
$
which is the desired equality.

\smallskip

\noindent
b) This follows from Theorem \ref{th:AttBre}   while observing that 
\begin{eqnarray*}
\ri(\dom \phi)\cap \ri(\dom \delta_{\Epi{K}{F}})\neq \emptyset & \Longleftrightarrow &  \bE_1\times\ri(\dom g)\cap \ri(\Epi{K}{F})\neq \emptyset\\
& \Longleftrightarrow & \exists x\in \ri(\dom F): \; F(x)\in \ri(\dom g)-\ri K\\
& \Longleftrightarrow & F(\ri (\dom F))\cap \ri(\dom g-K)\neq \emptyset. 
\end{eqnarray*}
Here the second equivalence relies on  Lemma \ref{lem:RiKEpi}.
\smallskip

\noindent
c) The first statement follows from a) and b) and Theorem \ref{th:AttBre}. The expression of $\dom (g\circ F)^*$  is an immediate consequence of that.
\end{proof}

\noindent
We would like to point out that the technique of proof  based on infimal convolution  is an extension of the approach employed by Hiriart-Urruty in \cite{HiH 06}.  However, our setting is much more general, and  our point-based, Slater-type  qualification condition \eqref{eq:CQ1} differs substantially from the Fenchel-Rockafellar-type condition used in \cite{HiH 06}.

We now  continue with a whole sequence of rather immediate consequences of Theorem \ref{th:ConjMain}. 
The first one is  a subdifferential formula. 
To this end, we first establish an auxiliary result.
 
 \begin{lemma}\label{lem:SubAux}  Let $K\subset \bE_2$ be a closed, convex cone. Then the following hold:
 \begin{itemize}
 \item[a)]    Let $g:\bE_2\to \rp$ such that   \eqref{eq:FKMonotone2} holds  and let $\bar x\in F^{-1}(\dom g)$.  Then 
 $
 \p g(F(\bar x))\subset -K^\circ.
 $
 \item[b)] Under the assumptions of Theorem \ref{th:ConjMain} c) let $\bar y\in \p(g\circ F)(\bar x)$ and $\bar v\in\argmin_{v\in -K^\circ} \left\{g^*(v)+\ip{v}{F}^*(\bar y)\right\}$. Then $\bar v\in \p g(F(\bar x))$.

 \end{itemize}

\end{lemma}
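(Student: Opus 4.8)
The plan is to prove the two parts separately, using the Fenchel--Young equality \eqref{eq:FYE} as the organizing tool throughout.

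For part a), suppose $\bar v\in\p g(F(\bar x))$ and fix an arbitrary $w\in K$. The subgradient inequality at $F(\bar x)$ gives $g(F(\bar x))+\ip{\bar v}{y-F(\bar x)}\le g(y)$ for all $y\in\bE_2$. Choosing $y=F(\bar x)-w$, which lies in $\dom g$ by Lemma \ref{lem:KIncreas} (as $g$ satisfies \eqref{eq:FKMonotone2}), we get $g(F(\bar x))-\ip{\bar v}{w}\le g(F(\bar x)-w)\le g(F(\bar x))$, where the last inequality is $K$-monotonicity \eqref{eq:FKMonotone2}. Hence $\ip{\bar v}{w}\ge 0$ for all $w\in K$, i.e. $\bar v\in -K^\circ$. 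This is the whole argument for a); it is short and the only mild subtlety is invoking Lemma \ref{lem:KIncreas} to stay in the domain.

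For part b), the strategy is to run through the chain of Fenchel--Young equalities. First, $\bar y\in\p(g\circ F)(\bar x)$ means, by \eqref{eq:FYE}, that $(g\circ F)(\bar x)+(g\circ F)^*(\bar y)=\ip{\bar x}{\bar y}$. By Theorem \ref{th:ConjMain} c) the conjugate is attained at $\bar v$, so $(g\circ F)^*(\bar y)=g^*(\bar v)+\ip{\bar v}{F}^*(\bar y)$. Substituting and rearranging,
\[
g(F(\bar x))+g^*(\bar v)+\ip{\bar v}{F}^*(\bar y)=\ip{\bar x}{\bar y}.
\]
Now I apply the Fenchel--Young inequality \eqref{eq:FYI} to each of the two pairs on the left: $g(F(\bar x))+g^*(\bar v)\ge\ip{\bar v}{F(\bar x)}$ and $\ip{\bar v}{F}^*(\bar y)+\ip{\bar v}{F}(\bar x)\ge\ip{\bar y}{\bar x}$, the latter using $\ip{\bar v}{F}(\bar x)=\ip{\bar v}{F(\bar x)}$ (valid since $\bar x\in\dom F$; note $\bar x\in\dom(g\circ F)\subset\dom F$). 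Adding these two inequalities gives
\[
g(F(\bar x))+g^*(\bar v)+\ip{\bar v}{F}^*(\bar y)\ge \ip{\bar v}{F(\bar x)}+\ip{\bar y}{\bar x}-\ip{\bar v}{F(\bar x)}=\ip{\bar x}{\bar y},
\]
which matches the equality above. Therefore both Fenchel--Young inequalities hold with equality; in particular $g(F(\bar x))+g^*(\bar v)=\ip{\bar v}{F(\bar x)}$, which by \eqref{eq:FYE} says exactly $\bar v\in\p g(F(\bar x))$.

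The main obstacle is bookkeeping rather than any deep step: one must be careful that $\bar x\in\dom F$ so that the scalarization $\ip{\bar v}{F}(\bar x)$ equals $\ip{\bar v}{F(\bar x)}$ and is finite, and that $\ip{\bar v}{F}^*(\bar y)<+\infty$ (which holds since $\bar v$ is a minimizer realizing the finite value $(g\circ F)^*(\bar y)$, finite because $\bar y\in\dom(g\circ F)^*$ as $\p(g\circ F)(\bar x)\ne\emptyset$). Once finiteness is secured, the additivity trick (two Fenchel--Young inequalities whose sum is forced to be an equality, hence each is) does all the work. It is worth noting that part a) is not strictly needed for part b) under the hypotheses of Theorem \ref{th:ConjMain} c), but it confirms the consistency $\bar v\in -K^\circ$ with the feasible set of the minimization.
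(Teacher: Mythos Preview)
Your proof is correct and follows essentially the same approach as the paper. Part a) is identical (the paper does not even bother invoking Lemma \ref{lem:KIncreas}, since the subgradient inequality holds for all $y$ and $K$-monotonicity already forces $g(F(\bar x)-k)\le g(F(\bar x))<+\infty$). For part b), the paper rewrites $\ip{\bar v}{F}^*(\bar y)$ as $\sup_{z}\{\ip{z}{\bar y}-\ip{F(z)}{\bar v}\}$ via Lemma \ref{lem:Support} and substitutes $z=\bar x$ to obtain the single inequality $\ip{F(\bar x)}{\bar v}\ge g(F(\bar x))+g^*(\bar v)$, whereas you phrase the same step as two Fenchel--Young inequalities whose sum is forced to be tight; these are the same argument in different clothing.
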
 
 \begin{proof} a)  Let $\bar v\in \p g(F(\bar x))$, i.e.
 \[
 g(F(\bar x))+\ip{\bar v}{y-F(\bar x)}\leq g(y)\quad (y\in \bE_2).
 \]
 In particular, letting $y=F(\bar x)-k\;(k\in K)$ and by \eqref{eq:FKMonotone2}, this implies
 \[
0\leq  g(F(\bar x))-g(F(\bar x)-k)\leq \ip{\bar v}{k}\quad (k\in K).
 \]
 Therefore, $\bar v\in -K^\circ$.
 \smallskip
 
 \noindent 
 b)  We observe that
\begin{eqnarray*}
\ip{\bar x}{\bar y} & =  & (g\circ F)(\bar x) +(g\circ F)^*(\bar y)\\
& = & g(F(\bar x))+g^*(\bar v)+\sigma_{\gph F}(\bar y,-\bar v)\\
& = &  g(F(\bar x))+g^*(\bar v)+\sup_{z\in\bE_1}\ip{z}{\bar y}-\ip{F(z)}{\bar v}.
\end{eqnarray*}
Here the first identity is due to \eqref{eq:FYE}, and  the second is due to Theorem \ref{th:ConjMain}  c) in combination with 
Lemma \ref{lem:Support}. Now insert  $z:=\bar x$ to obtain
\begin{eqnarray*}
& & \ip{\bar x}{\bar y}   \geq g(F(\bar x)) + g^*(\bar v) +\ip{\bar x}{\bar y}-\ip{F(\bar x)}{\bar v} \\
& \Longleftrightarrow& \ip{F(\bar x)}{\bar v}\geq g(F(\bar x))+g^*(\bar v)\\
& \Longleftrightarrow& \bar v\in \p g(F(\bar x)).
\end{eqnarray*}
 \end{proof}

%
%

\begin{corollary}[Subdifferential of composite functions]\label{cor:Sub}  For $g:\bE_2\to \rp$ and $F:\bE_1\to\bE_2^\bullet$ we have 
\[
\p (g\circ F)(\bar x)\supset\bigcup_{v\in \p g(F(\bar x))} \p\ip{v}{F} (\bar x)\quad (\bar x\in \dom g\circ F).
\]
Equality holds under  the assumptions of Theorem \ref{th:ConjMain} b).
\end{corollary}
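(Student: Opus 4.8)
The plan is to establish the inclusion $\supset$ directly from the definitions, and then obtain the reverse inclusion under the hypotheses of Theorem \ref{th:ConjMain} b) by combining the conjugate formula with the Fenchel--Young equality \eqref{eq:FYE} and the auxiliary Lemma \ref{lem:SubAux} b).

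For the easy inclusion, let $\bar v\in\p g(F(\bar x))$ and $\bar y\in\p\ip{\bar v}{F}(\bar x)$. Unwinding the two subgradient inequalities, for all $z\in\dom F$ one has $\ip{\bar v}{F(z)}\ge\ip{\bar v}{F(\bar x)}+\ip{\bar y}{z-\bar x}$ and $g(F(z))\ge g(F(\bar x))+\ip{\bar v}{F(z)-F(\bar x)}$; adding these (and noting that for $z\notin\dom F$ the composite inequality is vacuous since $(g\circ F)(z)=+\infty$) yields $(g\circ F)(z)\ge(g\circ F)(\bar x)+\ip{\bar y}{z-\bar x}$, i.e. $\bar y\in\p(g\circ F)(\bar x)$. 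This needs no qualification condition.

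For the reverse inclusion, assume the hypotheses of Theorem \ref{th:ConjMain} b), so by part c) of that theorem $(g\circ F)^*(\bar y)=\min_{v\in -K^\circ}\{g^*(v)+\ip{v}{F}^*(\bar y)\}$ with the minimum attained. Take $\bar y\in\p(g\circ F)(\bar x)$ and let $\bar v\in -K^\circ$ attain this minimum. By the Fenchel--Young equality \eqref{eq:FYE}, $(g\circ F)(\bar x)+(g\circ F)^*(\bar y)=\ip{\bar x}{\bar y}$, which expands to
\[
g(F(\bar x))+g^*(\bar v)+\ip{\bar v}{F}^*(\bar y)=\ip{\bar x}{\bar y}.
\]
Lemma \ref{lem:SubAux} b) gives $\bar v\in\p g(F(\bar x))$, so by \eqref{eq:FYE} again $g(F(\bar x))+g^*(\bar v)=\ip{\bar v}{F(\bar x)}$. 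Subtracting, $\ip{\bar v}{F}^*(\bar y)=\ip{\bar x}{\bar y}-\ip{\bar v}{F(\bar x)}=\ip{\bar y}{\bar x}-\ip{\bar v}{F}(\bar x)$; since $\bar x\in\dom\ip{\bar v}{F}$, this is exactly the Fenchel--Young equality for $\ip{\bar v}{F}$, hence $\bar y\in\p\ip{\bar v}{F}(\bar x)$. Thus $\bar y\in\bigcup_{v\in\p g(F(\bar x))}\p\ip{v}{F}(\bar x)$, completing the proof.

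The main obstacle is simply ensuring the bookkeeping of the Fenchel--Young equalities is airtight — in particular, verifying that $\bar v\in\p g(F(\bar x))$ (handled by Lemma \ref{lem:SubAux} b), which already did the substitution-$z:=\bar x$ trick) and that $\bar x$ lies in the domain of $\ip{\bar v}{F}$ so that the last Fenchel--Young equation genuinely certifies membership in the subdifferential; both follow since $\bar x\in\dom(g\circ F)\subset\dom F$. No genuinely hard estimate is needed once Theorem \ref{th:ConjMain} b)--c) and Lemma \ref{lem:SubAux} are in hand.
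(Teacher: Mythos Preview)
Your proof is correct and follows essentially the same route as the paper: the easy inclusion is obtained by chaining the two subgradient inequalities, and the reverse inclusion is obtained by taking a minimizer $\bar v$ in the conjugate formula from Theorem~\ref{th:ConjMain}~c), invoking Lemma~\ref{lem:SubAux}~b) to get $\bar v\in\p g(F(\bar x))$, and then reading off the Fenchel--Young equality for $\ip{\bar v}{F}$. The only cosmetic difference is that the paper writes out the chain $\ip{\bar x}{\bar y}=g(F(\bar x))+g^*(\bar v)+\ip{\bar v}{F}^*(\bar y)=\ip{F(\bar x)}{\bar v}+\ip{\bar v}{F}^*(\bar y)$ in one line rather than subtracting, but the substance is identical.
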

\begin{proof} Let $\bar y\in \bigcup_{v\in \p g(F(\bar x))} \p\ip{v}{F} (\bar x)$, i.e. there exists 
$\bar v\in \p g(F(\bar x))$ such that $\bar y\in \p\ip{\bar v}{F} (\bar x)$. Therefore
\begin{equation}\label{eq:SGg}
g(F(\bar x))+\ip{\bar v}{y-F(\bar x)}\leq g(y)\quad (y\in \bE_2)
\end{equation}
and 
\begin{equation}\label{eq:SGgF}
\ip{\bar v}{F(\bar x)}+\ip{\bar y}{x-\bar x}\leq \ip{\bar v}{F(x)}\quad (x\in \bE_1).
\end{equation}
Inserting $y:=F(x)\;(x\in \bE_1)$ in \eqref{eq:SGg} yields 
\[
g(F(\bar x))+\ip{\bar v}{F(x)-F(\bar x)}\leq g(F(x))\quad (x\in \bE_1).
\]
Combining with  \eqref{eq:SGgF} now gives 
\[
g(F(\bar x))+\ip{\bar y}{x-\bar x}\leq g(F(x))\quad (x\in \bE_1),
\]
which shows $\bar y\in \p (g\circ F)(\bar x)$.

 To see the reverse inclusion under the additional assumptions let $\bar y\in \p(g\circ F)(\bar x)$, i.e., by \eqref{eq:FYE}, we have 
\begin{equation} \label{eq:FYComp}
\ip{\bar x}{\bar y}  =  (g\circ F)(\bar x)+(g\circ F)^*(\bar y).
\end{equation}
Now let $\bar v\in\argmin_{v\in -K^\circ} \left\{g^*(v)+\ip{v}{F}^*(\bar y)\right\}$. Then from \eqref{eq:FYComp} we infer
\begin{eqnarray*}
\ip{\bar x}{\bar y} & = & g(F(\bar x))+g^*(\bar v)+\ip{\bar v}{F}^*(\bar y)\\
& = & \ip{F(\bar x)}{\bar v}+\ip{\bar v}{F}^*(\bar y)\\
& = & \ip{\bar v}{F}(\bar x)+\ip{\bar v}{F}^*(\bar y),
\end{eqnarray*}
where the first identity is due to  the choice of $\bar v$ and the second is \eqref{eq:FYE} with Lemma \ref{lem:SubAux} b).  This yields $\bar y\in \p\ip{\bar v}{F}(\bar x)$. 
%
\end{proof}

\begin{remark}\label{rem:DiffScalar}  Given $F:\bE_1\to \bE_2$ $K$-convex   and   $v\in -K^\circ$, the convex    function $\ip{v}{F}$ enjoys a rich (sub)differential calculus under additional assumptions on the continuity or differentiability of $F$. For instance, if $F$ is locally Lipschitz,  then so is $\ip{v}{F}$ with 
\[
\p \ip{v}{F}(\bar x)= \bar \p F(\bar x)^*v,
\]
where $\bar \p F(\bar x)$ is  {\em  Clarke's generalized Jacobian} of $F$ at $\bar x$, see \cite{Cla 83, RoW 98}. In particular, if $F$ is differentiable, then $\ip{v}{F}$ is continuously differentiable with 
\[
\nabla \ip{v}{F}(\bar x) = \nabla F(\bar x)^*v.
\]
\end{remark}

\noindent
The next result  shows that our setting $g\circ F$ in fact covers the seemingly  more general setting  from \eqref{eq:cvx-comp} with an additional additive term. Once more, the proof relies essentially  on infimal convolution.

\begin{corollary}[Conjugate and subdifferential  of additive composite functions]\label{cor:AddConj} Under the assumptions of Theorem \ref{th:ConjMain} let  \eqref{eq:FKMonotone}  hold. In addition,  let $f\in \Gamma$ and consider the qualification condition 
\begin{equation}\label{eq:CQ2}
F(\ri (\dom f) \cap  \ri(\dom F))\cap \ri (\dom g-K)\neq \emptyset.
\end{equation}
Then the following hold:
\begin{itemize}
\item[a)] If \eqref{eq:CQ2} holds then  
\[
(f+g\circ F)^*(p)=\min\limits_{\overset{v\in -K^\circ,}{y\in \bE_1}} g^*(v)+f^*(y)+\ip{v}{F}^*(p-y),
\]
\item[b)] We  have 
  \[
  \p (f+g\circ F)(\bar x)\supset \p f(\bar x)+\bigcup_{v\in \p g(F(\bar x))} \p\ip{v}{F}(\bar x) \quad(\bar x\in \dom f\cap \dom g\circ F).
  \]
  Equality holds under \eqref{eq:CQ2}.
\end{itemize}

\end{corollary}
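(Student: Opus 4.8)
The plan is to exhibit $f+g\circ F$ as a plain cone-composite function $\tilde g\circ\tilde F$ to which Theorem \ref{th:ConjMain} and Corollary \ref{cor:Sub} apply verbatim, and then read off a) and b) by unwinding a product structure. Concretely, set $\tilde F\colon\bE_1\to(\bE_1\times\bE_2)^\bullet$, $\tilde F(x):=(x,F(x))$ (so $\dom\tilde F=\dom F$), let $\tilde K:=\{0\}\times K\subset\bE_1\times\bE_2$, and define $\tilde g\colon\bE_1\times\bE_2\to\rp$ by $\tilde g(y,z):=f(y)+g(z)$. A short case distinction gives $\tilde g\circ\tilde F=f+g\circ F$ as functions on $\bE_1$; moreover $\tilde K$ is a closed, convex cone, $\tilde F$ is proper, and $\tilde g\in\Gamma(\bE_1\times\bE_2)$ since $\dom\tilde g=\dom f\times\dom g\neq\emptyset$.

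Next I would check that the triple $(\tilde K,\tilde F,\tilde g)$ meets the hypotheses of Theorem \ref{th:ConjMain}. The $\tilde K$-convexity of $\tilde F$ is immediate: the first component of $\lambda\tilde F(x)+(1-\lambda)\tilde F(y)-\tilde F(\lambda x+(1-\lambda)y)$ vanishes, while its second component lies in $K$ by the $K$-convexity of $F$. For the monotonicity condition \eqref{eq:FKMonotone} relative to $(\tilde F,\tilde K)$, note that $(x,(w_1,w_2))\in\Epi{\tilde K}{\tilde F}$ forces $w_1=x$ and $w_2-F(x)\in K$, whence $\tilde g(\tilde F(x))=f(x)+g(F(x))\le f(x)+g(w_2)=\tilde g(w_1,w_2)$ by \eqref{eq:FKMonotone} for $(F,g)$. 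Finally, using that the relative interior of a product is the product of relative interiors, together with $\tilde K^\circ=\bE_1\times K^\circ$ and $\dom\tilde g-\tilde K=\dom f\times(\dom g-K)$, Lemma \ref{lem:RiKEpi} and the computation in the proof of Theorem \ref{th:ConjMain} b) show that the qualification condition \eqref{eq:CQ1} for $\tilde g\circ\tilde F$ unravels exactly to \eqref{eq:CQ2}: there is $x\in\ri(\dom F)\cap\ri(\dom f)$ with $F(x)\in\ri(\dom g-K)$.

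Now a) follows from Theorem \ref{th:ConjMain} c) applied to $\tilde g\circ\tilde F$ under \eqref{eq:CQ2}, which gives $(f+g\circ F)^*(p)=\min_{\tilde v\in-\tilde K^\circ}\tilde g^*(\tilde v)+\ip{\tilde v}{\tilde F}^*(p)$. Writing a generic $\tilde v\in-\tilde K^\circ=\bE_1\times(-K^\circ)$ as $\tilde v=(y,v)$ with $y\in\bE_1$, $v\in-K^\circ$, separability of the conjugate yields $\tilde g^*(y,v)=f^*(y)+g^*(v)$, while $\ip{\tilde v}{\tilde F}(x)=\ip{y}{x}+\ip{v}{F}(x)$ gives $\ip{\tilde v}{\tilde F}^*(p)=\ip{v}{F}^*(p-y)$; substituting produces the stated formula. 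For b), Corollary \ref{cor:Sub} applied to $\tilde g\circ\tilde F$ gives, for $\bar x\in\dom(f+g\circ F)=\dom f\cap\dom(g\circ F)$, the inclusion $\p(f+g\circ F)(\bar x)\supset\bigcup_{\tilde v\in\p\tilde g(\tilde F(\bar x))}\p\ip{\tilde v}{\tilde F}(\bar x)$, with equality under \eqref{eq:CQ2}; since $\p\tilde g(y,z)=\p f(y)\times\p g(z)$ and $\p\ip{(w,v)}{\tilde F}(\bar x)=w+\p\ip{v}{F}(\bar x)$ (the subdifferential being shifted by the gradient of the linear form $\ip{w}{\cdot}$), the right-hand side equals $\p f(\bar x)+\bigcup_{v\in\p g(F(\bar x))}\p\ip{v}{F}(\bar x)$, as claimed.

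No step here is deep; the only thing demanding care is the bookkeeping in the last paragraph — consistently decomposing $-\tilde K^\circ$, $\tilde g^*$, $\ip{\tilde v}{\tilde F}^*$, $\p\tilde g$ and $\p\ip{\tilde v}{\tilde F}$ through the product structure, and matching \eqref{eq:CQ1} for the lifted data with \eqref{eq:CQ2}. An alternative would be to skip the lifting and instead apply Theorem \ref{th:AttBre} to $f$ and $g\circ F$, obtaining $(f+g\circ F)^*=\cl(f^*\Box(g\circ F)^*)$ and then inserting the formula for $(g\circ F)^*$ from Theorem \ref{th:ConjMain} c); but that route requires relating $\ri(\dom f)\cap\ri(\dom g\circ F)$ to \eqref{eq:CQ2} and handling properness of $g\circ F$ separately, so the lifting argument is the cleaner one.
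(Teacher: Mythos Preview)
Your proof is correct and, like the paper, proceeds by exhibiting $f+g\circ F$ as a composite $\tilde g\circ\tilde F$ and invoking Theorem~\ref{th:ConjMain} and Corollary~\ref{cor:Sub}. However, the two liftings are genuinely different. The paper absorbs $f$ into the \emph{inner} map, taking $\tilde F(x)=(f(x),F(x))$ with target $\R\times\bE_2$, cone $\tilde K=\R_+\times K$, and outer function $\tilde g(s,y)=s+g(y)$; you instead absorb $f$ into the \emph{outer} map, taking $\tilde F(x)=(x,F(x))$ with target $\bE_1\times\bE_2$, cone $\tilde K=\{0\}\times K$, and $\tilde g(y,z)=f(y)+g(z)$.

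Your route is a bit more economical. In the paper's lifting, Theorem~\ref{th:ConjMain} first yields $(f+g\circ F)^*(p)=\min_{v\in-K^\circ} g^*(v)+(f+\ip{v}{F})^*(p)$, and a second application of Theorem~\ref{th:AttBre} (using $\ri(\dom f)\cap\ri(\dom F)\neq\emptyset$, extracted from \eqref{eq:CQ2}) is needed to split $(f+\ip{v}{F})^*$ into $f^*\Box\ip{v}{F}^*$; for part~b) the paper then argues directly via Fenchel--Young and the subdifferential sum rule. In your lifting, both the conjugate and subdifferential formulas fall out immediately from the separable structure of $\tilde g$ and the affine-plus-$\ip{v}{F}$ structure of $\ip{\tilde v}{\tilde F}$, with no second appeal to infimal convolution or to the sum rule. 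The paper's lifting, on the other hand, has the minor advantage that its $\tilde g$ is automatically $\tilde K$-increasing in the first variable regardless of $f$, and it lands in a lower-dimensional target space; but neither point matters for the argument at hand.
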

\begin{proof} a)  Define $\tilde g:\R\times \bE_2\to \rp$ by $\tilde g(s,y):=s+g(y)$.  Then $\tilde g\in \Gamma_0(\R\times \bE_2)$ with 
\begin{equation}\label{eq:ConjTild}
\tilde g^*(t,v)=\delta_{\{1\}}(t)+g^*(v).
\end{equation}
Moreover, define  $\tilde F:\bE_1\to \rp\times \bE_2^\bullet$ by $\tilde F(x)=(f(x),F(x))$. Then $\dom \tilde F=\dom f\cap\dom F$.    Setting $\tilde K :=\R_+
 \times K$,  we find that $\tilde F$ is $\tilde K$-convex, $\tilde g\circ \tilde F=f+g\circ F\in \Gamma_0(\bE_1)$ and $\tilde g\circ \tilde F$ satisfies \eqref{eq:FKMonotone} with $\tilde K$  (since $g\circ F$ satisfies it with $K$). Moreover, as $\dom \tilde g=\R\times \dom g$,  we realize that condition \eqref{eq:CQ1} for   $\tilde g,\tilde F$ and $\tilde K$ amounts to \eqref{eq:CQ2}. Therefore, we obtain
 \begin{eqnarray}
 \label{eq:conj_comp}
 (f+g\circ F)^*(p) & = & (\tilde g\circ \tilde F)(p)\notag\\
 & = & \min_{(t,v)\in -\tilde K^\circ} \tilde g^*(t,v)+\ip{(t,v)}{\tilde F}^*(p)\notag\\
 & = & \min_{v\in -K^\circ} g^*(v)+\sup_{x\in \bE_1}\{\ip{p}{v}-(f(x)+\ip{v}{F}(x))\}\\
 & = & \min_{v\in -K^\circ} g^*(v)+ (f+\ip{v}{F})^*(p)\notag\\
 & = &  \min_{v\in -K^\circ} g^*(v)+(f^*\Box \ip{v}{F}^*)(p)\notag\\
 &= & \min_{\overset{v\in -K^\circ,}{y\in \bE_1}} g^*(v)+f^*(y)+\ip{v}{F}^*(p-y).\notag
 \end{eqnarray}

\noindent
Here the second identity is due to Theorem \ref{th:ConjMain}, the third one uses \eqref{eq:ConjTild}, while the fifth relies once more on Theorem \ref{th:AttBre}, realizing that $\dom \ip{v}{F}=\dom F$ and \eqref{eq:CQ2} implies that $\ri(\dom f)\cap \ri(\dom F)\neq \emptyset$.  
\smallskip

\noindent 
b)  The first statement (without qualification condition) follows from Corollary \ref{cor:Sub} and the subdifferential sum rule \cite[Theorem 23.8]{Roc 70}. 

In order to prove the converse inclusion under \eqref{eq:CQ2}, let  $\bar y\in\partial(f+g\circ F)(\bar x)$. Hence by \eqref{eq:FYE} and \eqref{eq:conj_comp} we have
\[
\ip{\bar x}{\bar y}-(f(x)+g(F(\bar x))=(f+g\circ F)^*(\bar y) = \min_{v\in -K^\circ}g^*(v)+(f+\ip{v}{F})^*(\bar y).
\]
Hence there exists  $\bar v\in -K^\circ$ such that
\begin{equation}
\label{eq:co2}
(f+g\circ F)^*(\bar y)= g^*(\bar v)+(f+\ip{\bar v}{F})^*(\bar y).
\end{equation}
Therefore, we have
\begin{equation}
\label{eq:co1}
\begin{aligned}
\ip{\bar x}{\bar y} =f(\bar x)+g(F(\bar x))+g^*(\bar v)+(f+\ip{\bar v}{F})^*(\bar y).
\end{aligned}
\end{equation}
On the one hand, \eqref{eq:co1} and the Fenchel-Young inequality \eqref{eq:FYI} applied to $g$ imply \[
\ip{\bar x}{\bar y}\geq f(\bar x)+\ip{\bar v}{F(\bar x)}+(f+\ip{\bar v}{F})^*(\bar y),
\]
which, by \eqref{eq:FYE} applied to $f+\ip{\bar v}{F}$, yields $\bar y\in\partial(f+\ip{\bar v}{F})(\bar x)$. On the other hand, we deduce from \eqref{eq:co1} and the definition of the conjugate that
\[
\ip{\bar x}{\bar y}\geq f(\bar x) + g(F(\bar x))+g^*(v)+\ip{\bar x}{\bar y}-f(\bar x)-\ip{\bar v}{F(\bar x)},
\]
which, in turn, yields
\[
\ip{\bar v}{F(\bar x)}\geq g(F(\bar x))+g^*(\bar v),
\]
and hence, $\bar v\in\partial g(F(\bar x))$, by \eqref{eq:FYE}. Altogether, we deduce that
\[
\begin{aligned}
\partial(f+g\circ F)(\bar x)&\subset\bigcup_{v\in\partial g(F(\bar x))}\partial(f+\ip{v}{F})(\bar x)\\
&=\bigcup_{v\in\partial g(F(\bar x))}\big(\partial f(\bar x)+\partial\ip{v}{F}(\bar x)\big)\\
&=\partial f(\bar x)+\bigcup_{v\in\partial g(F(\bar x))}\partial\ip{v}{F}(\bar x)\\
&\subset \partial f(\bar x)+\partial(g\circ F)(\bar x),
\end{aligned}
\]
and hence, the conclusion follows. Here, the first equality follows from the subdifferential sum rule, see e.g. \cite[Theorem 23.8]{Roc 70}  because $\ri(\dom f)\cap\ri(\dom F)\neq\emptyset$ by \eqref{eq:CQ2},  and the last identity follows from Corollary \ref{cor:Sub}.  This establishes the desired inclusion and thus concludes the proof.
\end{proof}

Of course, using  Fermat's rule, i.e. $\argmin \Phi=(\p \Phi)^{-1}(0)$,  and the fact that $\inf \Phi=-\inf \Phi^*(0)$ for any convex function $\Phi$,  Corollary \ref{cor:AddConj}  can be exploited to derive (necessary and sufficient) optimality conditions and a duality framework for the convex-composite optimization problem \eqref{eq:cvx-comp}.  We will do this explicitly in Section \ref{sec:ConicProg} for conic programming.
General optimality conditions for the convex-composite problem \eqref{eq:cvx-comp} are derived in  \cite[Proposition 8.1 (ii)]{Pen 99} under    \eqref{eq:CQ2}.

The same expressions for the conjugate and subdifferential as in Corollary \ref{cor:AddConj} were derived   by Combari et al. \cite[Proposition 4.11]{CLT 94} and Bot et al.  \cite[Theorem~3.3, Corollary 3.4]{BGW 09}.  The arguments (adaptable to the infinite-dimensional setting) used in \cite{BGW 09, CLT 94} are based on investigating properties of a suitable perturbation function and  hence, they require different notions of  lower semicontinuity of $F$ even for the finite-dimensional setting. Our arguments, on the other hand, are in essence a synthesis of the infimal convolution approach to convex-composite functions inspired by  \cite{HiH 06} and the  $K$-convexity study in \cite{Pen 99}. This allows us to eliminate the assumptions on  lower semicontinuity of $f$, $g$ and $F$ and to relax the $K$-increasing property \eqref{eq:FKMonotone2}  of $g$ to the weaker  condition \eqref{eq:FKMonotone}. 

The results by Bot et al.  \cite[Theorem~3.3, Corollary 3.4]{BGW 09}  show that for  $f\in\Gamma_0(\bE_1)$, $g\in\Gamma_0(\bE_2)$ $K$-increasing, and  $\set{\ip{v}{F}}{v\in -K^\circ}\subset\Gamma_0(\bE_1)$, the conjugate formula from Corollary \ref{cor:AddConj} a) holds if and only if 
\begin{equation}
\label{eq:regl1}
\bigcup_{v\in\dom g^*}\big((0, g^*(v))+\epi(f+\ip{v}{F})^*\big)
\end{equation}
is closed. The subdifferential formula from Corollary \ref{cor:AddConj} b) then follows. We point out that, although  \cite[Theorem~3.3]{BGW 09} is a deep result,  condition \eqref{eq:regl1} is, in general,  not verifiable.  On the other hand, in the infinite dimensional setting its verifiability  is comparable to the one for Attouch-Br\'ezis-type conditions.

The study  by Combari et al. \cite{CLT 94}, like ours, provides verifiable conditions for the conjugate and subdifferential formulae. 
 Therefore we would briefly like to discuss the differences in assumptions, in particular the qualification conditions, when compared in the finite-dimensional setting:  In \cite{CLT 94}, the authors  assume, as we do,  the properness and $K$-convexity of all functions in play. However, they also need the assumption that   $F$ is  lower semicontinuous in a generalized sense, see \cite[Definition~3.1]{CLT 94}, and they  also assume  $g$ to be   $K$-increasing \eqref{eq:FKMonotone2}, whereas we only require the weaker assumption \eqref{eq:FKMonotone}.  We note that, under \eqref{eq:FKMonotone2}, our  Slater-type  condition  \eqref{eq:CQ2} becomes
\begin{equation}
\label{eq:CQ3c}
F(\ri (\dom f)\cap  \ri(\dom F))\cap \ri (\dom g)\neq \emptyset.
\end{equation}
The (Attouch-Br\'ezis-type) qualification condition  comparable to ours, used in \cite{CLT 94} reads
\begin{equation}
\label{eq:CQcombari}
\R_+(\dom g- F(\dom F\cap\dom f))\;\text{ is a  subspace}
\end{equation}
which, using the fact that $\dom g=\dom g-K$ and $K+F(\dom F\cap\dom f)$ is convex, yields
\[
0\in  \ri(K+F(\dom f \cap  \dom F)- \dom g)
\]
or, equivalently,
\[
\ri(K+F(\dom f \cap  \dom F))\cap\ri(\dom g)\neq\emptyset.
\]
This condition differs from \eqref{eq:CQ3c} in general.  When  $f$ and $F$ are finite-valued and $\rge F$ is convex then \eqref{eq:CQcombari} becomes
\begin{equation}
\label{eq:CQcom}
\ri(\dom g)\cap\ri(\rge F)\neq\emptyset,
\end{equation}
which is stronger than our condition, which then reads
\[
\ri(\dom g)\cap\rge  F\neq\emptyset.
\]
The formula for $(g\circ F)^*$ is also obtained in \cite[Section~4]{BGW 07} under the following qualification condition
\[
\ri(\rge F\cap\dom g+K)\cap\ri(\dom g)\neq\emptyset,
\]
which, using the fact that $\dom g=\dom g-K$, is exactly equivalent to \eqref{eq:CQcom} if $\rge F$ is convex.
This qualification condition also appeared in \cite[Theorem~2.8]{Zal 83} where the author derived a duality result as well as the formula for $\varepsilon$-subdifferential for $g\circ F$.

The following  example, and also Example \ref{ex:Spec},  illustrates that \eqref{eq:FKMonotone} is, indeed, weaker  than \eqref{eq:FKMonotone2},  and that our qualification condition \eqref{eq:CQ2} differs from, and is often weaker than \eqref{eq:CQcombari}.

\begin{example}\label{ex:CQandMontone} Let $\bE_1=\bE_2=\R^2$, $K=\R_+\times\{0\}$,  $F:(x_1, x_2)\in \bE_1\mapsto (0,x_2)$, and define $g:=\delta_{\R_+\times \R}$.
Since $
\Epi{K}{F}=\set{(x_1, x_2, v_1, v_2)\in\R^4}{v_1\geq 0, v_2=x_2}
$
is convex, $F$ is $K$-convex. Now,  if $\rge F\ni (u_1, u_2)\leq_K(v_1, v_2)$, i.e.,
$
0=u_1\leq v_1,\quad u_2=v_2,
$
then $g(u_1,u_2)=g(v_1,v_2)$. This means that $g$ satisfies \eqref{eq:FKMonotone} but not \eqref{eq:FKMonotone2} by taking $u_1<0$ and $v_1\geq 0$. 
Furthermore, we note that
$
\R_+(\dom g-\rge F)=\R_+\times \R
$
is not a  subspace, so \eqref{eq:CQcombari} is violated,  but \eqref{eq:CQ2} is satisfied as 
$
\ri(\dom g-K)\cap\rge F= \R^2\cap \{0\}\times \R  \neq\emptyset.
$

\hfill $\diamond$
\end{example}

\noindent
The next corollary follows simply from the fact that condition \eqref{eq:FKMonotone} is trivially satisfied if $g$ is $K$-increasing and that \eqref{eq:CQ1} then simplifies to 
\begin{equation}\label{eq:CQ1simple}
F(\ri ( \dom F))\cap \ri (\dom g)\neq \emptyset.
\end{equation}

\begin{corollary}\label{cor:Conj1}   Let   $K\subset \bE_2$ be a closed, convex cone, $F:\bE_1\to \bE_2^\bullet$ $K$-convex,    $g\in \Gamma$ and    $K$-increasing such that  \eqref{eq:CQ1simple} holds  (which is true in particular when $g$ is finite-valued or  $F$ is surjective and $\dom F=\bE_2$). Then 
\[
(g\circ F)^*(p)=\min_{v\in -K^\circ} g^*(v)+\ip{v}{F}^*(p)
\]
and 
\[
\p (g\circ F)(\bar x)=\bigcup_{v\in \p g(F(\bar x))} \p\ip{v}{F}(\bar x)\quad(\bar x\in\dom g\circ F).
\]
\end{corollary}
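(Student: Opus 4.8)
The plan is to read this statement off from Theorem~\ref{th:ConjMain} and Corollary~\ref{cor:Sub}; the entire content is to check that the $K$-increasing hypothesis on $g$ collapses the assumptions of those two results to the simple form asserted here. First I would observe that \eqref{eq:FKMonotone2} trivially implies \eqref{eq:FKMonotone}: indeed, if $(x,y)\in\Epi{K}{F}$ then $F(x)\leq_K y$, and $K$-monotonicity of $g$ gives $g(F(x))\leq g(y)$. Second, Lemma~\ref{lem:KIncreas} yields $\dom g-K=\dom g$, hence $\ri(\dom g-K)=\ri(\dom g)$, so that the qualification condition \eqref{eq:CQ1} appearing in Theorem~\ref{th:ConjMain}~b) is literally condition \eqref{eq:CQ1simple}.

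With these two observations, the conjugate formula with the infimum attained is exactly the conclusion of Theorem~\ref{th:ConjMain}~b)--c), and the subdifferential formula follows from Corollary~\ref{cor:Sub}, whose equality clause holds precisely under the (now-verified) assumptions of Theorem~\ref{th:ConjMain}~b). It then remains to justify the two parenthetical sufficient conditions for \eqref{eq:CQ1simple}. Since $\dom F$ is a nonempty convex subset of the finite-dimensional space $\bE_1$ it has nonempty relative interior, so $F(\ri(\dom F))\neq\emptyset$; similarly $\dom g\neq\emptyset$ forces $\ri(\dom g)\neq\emptyset$. If $g$ is finite-valued then $\ri(\dom g)=\bE_2$ and the left-hand side of \eqref{eq:CQ1simple} equals the nonempty set $F(\ri(\dom F))$; if instead $\dom F=\bE_1$ and $\rge F=\bE_2$, then $F(\ri(\dom F))=F(\bE_1)=\bE_2\supseteq\ri(\dom g)\neq\emptyset$. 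Either way \eqref{eq:CQ1simple} holds.

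There is no real obstacle here: the proof is essentially bookkeeping. The only place where the $K$-increasing hypothesis is genuinely used is in identifying $\ri(\dom g-K)$ with $\ri(\dom g)$ through Lemma~\ref{lem:KIncreas}; and the only mildly delicate point in the parenthetical remarks is the (standard, purely finite-dimensional) fact that a nonempty convex set has nonempty relative interior.
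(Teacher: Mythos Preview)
Your proof is correct and follows essentially the same route as the paper: reduce \eqref{eq:FKMonotone} and \eqref{eq:CQ1} to \eqref{eq:CQ1simple} via the $K$-increasing hypothesis (using Lemma~\ref{lem:KIncreas}), then invoke Theorem~\ref{th:ConjMain} and Corollary~\ref{cor:Sub}. You are in fact more thorough than the paper, which leaves the parenthetical sufficient conditions for \eqref{eq:CQ1simple} unargued; note also that the ``$\dom F=\bE_2$'' in the statement is a typo for $\bE_1$, which you silently corrected.
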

\begin{proof} Since the fact that $g$ is $K$-increasing implies \eqref{eq:FKMonotone} the assertion follows from Theorem \ref{th:ConjMain} and Corollary \ref{cor:Sub}, respectively.
\end{proof}

\noindent
The next result shows that any closed, proper, convex function $g$ is increasing with respect to $-\hzn g$.

\begin{lemma}\label{lem:HorizonIncrease} Let $g\in \Gamma_0$. Then $g$ is  $(-\hzn g)$-increasing. 
\end{lemma}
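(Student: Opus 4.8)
The plan is to unwind the ordering $\leq_{-\hzn g}$ and then read off the inequality from a dual description of the horizon cone together with biconjugation. Recall from \eqref{eq:FKMonotone2} (with $K=-\hzn g$) that the statement ``$g$ is $(-\hzn g)$-increasing'' means exactly that $g(u)\leq g(v)$ whenever $v-u\in-\hzn g$, i.e. whenever $w:=u-v\in\hzn g$. So I would fix $u,v\in\bE$ with $u-v\in\hzn g$ and set $w:=u-v$, and the whole task reduces to proving $g(u)\leq g(v)$.

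The key input is the dual representation of $\hzn g$ already invoked in the proof of Theorem \ref{thm:cvx cvxcomp}: by \cite[Theorem~14.2]{Roc 70}, $\hzn g=\big(\overline{\cone}(\dom g^*)\big)^\circ$, so $w\in\hzn g$ is equivalent to $\ip{z}{w}\leq 0$ for every $z\in\dom g^*$. Since $g\in\Gamma_0$ we have $g=g^{**}$, and hence
\begin{align*}
g(u)=g^{**}(u)&=\sup_{z\in\dom g^*}\big\{\ip{z}{u}-g^*(z)\big\}\\
&=\sup_{z\in\dom g^*}\big\{\ip{z}{v}-g^*(z)+\ip{z}{w}\big\}\\
&\leq\sup_{z\in\dom g^*}\big\{\ip{z}{v}-g^*(z)\big\}=g^{**}(v)=g(v),
\end{align*}
where the inequality uses $\ip{z}{w}\leq 0$ on $\dom g^*$. (If $v\notin\dom g$ the claim is trivial, and the displayed chain remains valid since then both sides are $+\infty$.) This is precisely the assertion.

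I do not expect a genuine obstacle here; the only point requiring care is to express the horizon cone through $\dom g^*$ rather than through sublevel sets, after which the computation is immediate. As an alternative to the biconjugation argument one could instead invoke the recession-function inequality $g(x+w)\leq g(x)+(g0^+)(w)$ valid for $g\in\Gamma_0$, together with $(g0^+)(w)\leq 0$ for $w\in\hzn g$; both routes are equally short, and I would present the conjugate-based one since it uses only facts already recorded in the excerpt.
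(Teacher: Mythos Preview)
Your proof is correct and essentially identical to the paper's own proof: both invoke \cite[Theorem~14.2]{Roc 70} to write $\hzn g=(\overline{\cone}(\dom g^*))^\circ$, then use $g=g^{**}$ and the resulting sign condition on $\ip{z}{w}$ to compare the two suprema. The only cosmetic difference is that the paper writes $b=y-x\in -\hzn g$ and uses $\ip{b}{z}\ge 0$, whereas you work with $w=u-v\in\hzn g$ and $\ip{z}{w}\le 0$.
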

\begin{proof} Put $K:=-\hzn g$. Then \cite[Theorem~14.2]{Roc 70} yields
$
K=-(\overline{\cone}(\dom g^*))^\circ.
$
Now let $x,y\in \bE$ such that $x\preceq_K y$, i.e. $y=x+b$ for some $b\in K$. Since $g=g^{**}$ we hence find 
\begin{eqnarray*}
g(x) & =  & \sup_{z\in\dom g^*} \{\ip{x}{z}-g^*(z)\}\\
& = & \sup_{z\in \dom g^*} \{\ip{y}{z}-\ip{b}{z}-g^*(z)\}\\
& \leq  & \sup_{z\in \dom g^*} \{\ip{y}{z}-g^*(z)\}\\
& = & g(y),
\end{eqnarray*}
where the inequality relies on the fact that $\ip{b}{z} \geq 0$.
\end{proof}

\noindent
The observation from Lemma \ref{lem:HorizonIncrease} that any closed, proper, convex function is increasing with respect to its own negative horizon cone brings us back full circle  to the  starting point of our study  in Theorem \ref{thm:cvx cvxcomp}. The latter theorem is now merely an immediate consequence of Lemma \ref{lem:HorizonIncrease} and  Proposition \ref{prop:composite1}, and we observe that $F$ can even be extended-valued.   
We pursue this  setting  where $g$ is closed, proper, convex and $F$ is convex with respect to the negative horizon cone of $g$ in the next result.

\begin{corollary}\label{cor:Conj2} Let $g\in \Gamma_0(\bE_2)$ and let $F:\bE_1\to \bE_2^\bullet$ be $(-\hzn g)$-convex such that 
\[
F(\ri(\dom F))\cap \ri(\dom g)\neq \emptyset.
\]
Then 
\[
(g\circ F)^*(p)=\min_{v\in \bE_2} g^*(v)+\ip{v}{F}^*(p)
\]
and 
\[
\p (g\circ F)(\bar x)=\bigcup_{v\in \p g(F(\bar x))} \p\ip{v}{F}(\bar x) \quad(\bar x\in \dom g\circ F).
\]
\end{corollary}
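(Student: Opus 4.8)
The plan is to derive Corollary~\ref{cor:Conj2} as a direct specialization of Theorem~\ref{th:ConjMain} and Corollary~\ref{cor:Sub} with the particular choice $K:=-\hzn g$. First I would check that this $K$ is admissible: by \cite[Theorem~14.2]{Roc 70}, $\hzn g$ is a closed convex cone (being the polar of $\overline{\cone}(\dom g^*)$), hence so is $K=-\hzn g$. With this $K$, Lemma~\ref{lem:HorizonIncrease} tells us $g$ is $K$-increasing, i.e.\ \eqref{eq:FKMonotone2} holds, which in particular implies the weaker monotonicity \eqref{eq:FKMonotone} needed in Theorem~\ref{th:ConjMain}. The hypothesis that $F$ is $(-\hzn g)$-convex is precisely the $K$-convexity required there.

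Next I would verify that the qualification condition \eqref{eq:CQ1} reduces to the stated one. By Lemma~\ref{lem:KIncreas}, since $g$ is $K$-increasing we have $\dom g - K = \dom g$, so \eqref{eq:CQ1} becomes $F(\ri(\dom F))\cap \ri(\dom g)\neq\emptyset$, which is exactly the assumed Slater-type condition. This is the same simplification already recorded in \eqref{eq:CQ1simple} and used in Corollary~\ref{cor:Conj1}; indeed Corollary~\ref{cor:Conj2} is essentially Corollary~\ref{cor:Conj1} applied with $K = -\hzn g$, the only novelty being that this choice of $K$ is \emph{always} available for $g\in\Gamma_0$, by Lemma~\ref{lem:HorizonIncrease}. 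Then Theorem~\ref{th:ConjMain}(c) gives
\[
(g\circ F)^*(p)=\min_{v\in -K^\circ} g^*(v)+\ip{v}{F}^*(p),
\]
and Corollary~\ref{cor:Sub} gives the subdifferential formula with the union taken over $\p g(F(\bar x))$.

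The remaining point — and the only mildly delicate one — is to identify $-K^\circ$ with all of $\bE_2$, so that the minimization over $v\in-K^\circ$ in the conjugate formula can be written as an unconstrained minimization over $v\in\bE_2$ as stated. Since $K = -\hzn g = -(\overline{\cone}(\dom g^*))^\circ$, taking polars and using that $\overline{\cone}(\dom g^*)$ is closed and convex gives $K^\circ = -(-(\overline{\cone}(\dom g^*))^\circ)^\circ = -\overline{\cone}(\dom g^*)$ by \cite[Theorem~14.1]{Roc 70}, hence $-K^\circ = \overline{\cone}(\dom g^*) \supset \dom g^*$. Since $g^*(v)=+\infty$ for $v\notin\dom g^*$, enlarging the feasible set from $-K^\circ$ to all of $\bE_2$ does not change the infimum; likewise, by Lemma~\ref{lem:SubAux}(a), $\p g(F(\bar x))\subset -K^\circ$ automatically, so the subdifferential union is unaffected. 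I expect the main (minor) obstacle to be stating this polarity bookkeeping cleanly; no genuine difficulty arises beyond quoting the right lines of \cite{Roc 70}. Assembling these observations yields both displayed formulas, completing the proof.
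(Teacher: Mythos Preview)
Your proposal is correct and follows essentially the same route as the paper: set $K=-\hzn g$, invoke Lemma~\ref{lem:HorizonIncrease} for the $K$-increasing property, reduce \eqref{eq:CQ1} to \eqref{eq:CQ1simple}, apply Corollary~\ref{cor:Conj1} (equivalently Theorem~\ref{th:ConjMain}(c) and Corollary~\ref{cor:Sub}), and then drop the constraint $v\in -K^\circ$ via $\dom g^*\subset\overline{\cone}(\dom g^*)=(\hzn g)^\circ=-K^\circ$. Your write-up is slightly more detailed in the polarity bookkeeping, but the argument is the same.
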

\begin{proof} This follows from combining Corollary \ref{cor:Conj1} (with $K=-\hzn g$) and Lemma \ref{lem:HorizonIncrease} while observing that $ \dom g^*\subset \overline \cone(\dom g^*)=(\hzn g)^\circ$, cf.  \cite[Theorem~14.2]{Roc 70}.
\end{proof}

\noindent
The next corollary covers the  setting considered in \cite{HiH 06} where $F$ is a component-wise convex function, and whose technique of proof served as a source of inspiration for our study.

\begin{corollary} Let $g\in \Gamma(\R^m)$ be $\R^m_+$-increasing, $F:\bE\to (\R^m)^\bullet$ with  $F_i\in \Gamma(\bE)\;(i=1,\dots,m)$ such that 
\[
F\left(\bigcap_i^m{\ri(\dom F_i)}\right)\cap \ri (\dom g)\neq \emptyset.
\]
Then 
\[
(g\circ F)^*(p)=\min_{v\geq 0} g^*(v)+\left(\sum_{i=1}^mv_i F_i\right)^*(p)
\]
and 
\[
\p (g\circ F)(\bar x)=\bigcup_{v\in \p g(F(\bar x))} \sum_{i=1}^{m} v_i\p F_i(\bar x)\quad(\bar x\in  \dom g\circ F).
\]
\end{corollary}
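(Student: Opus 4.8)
The plan is to recognize the statement as the instance $K=\R^m_+$ of Corollary \ref{cor:Conj1}, so the whole proof is a matter of checking that its hypotheses specialize correctly. First I would record the cone data: $\R^m_+$ is closed and convex and its polar is $(\R^m_+)^\circ=-\R^m_+$, so $-K^\circ=\R^m_+=\set{v\in\R^m}{v\ge 0}$, which is exactly the index set appearing in the claimed minimum and union. Next I would verify the hypotheses of Corollary \ref{cor:Conj1} one at a time. Since $\dom F=\bigcap_{i=1}^m\dom F_i$ is an intersection of convex sets and each $F_i$ is convex, the inequality $F(\lambda x+(1-\lambda)y)\le_{\R^m_+}\lambda F(x)+(1-\lambda)F(y)$ holds componentwise on $\dom F$, i.e. $F$ is $\R^m_+$-convex; that $g\in\Gamma(\R^m)$ is $\R^m_+$-increasing is assumed. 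It then remains to check the qualification condition \eqref{eq:CQ1simple}, namely $F(\ri(\dom F))\cap\ri(\dom g)\ne\emptyset$.

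For this step the key tool is \cite[Theorem~6.5]{Roc 70}: the stated hypothesis $F(\bigcap_i\ri(\dom F_i))\cap\ri(\dom g)\ne\emptyset$ forces $\bigcap_{i=1}^m\ri(\dom F_i)\ne\emptyset$, and under this condition $\ri\big(\bigcap_i\dom F_i\big)=\bigcap_i\ri(\dom F_i)$, i.e. $\ri(\dom F)=\bigcap_i\ri(\dom F_i)$. Substituting this identity, the hypothesis becomes literally \eqref{eq:CQ1simple}. Corollary \ref{cor:Conj1} then yields
\[
(g\circ F)^*(p)=\min_{v\ge 0}\ g^*(v)+\ip{v}{F}^*(p),\qquad
\p(g\circ F)(\bar x)=\bigcup_{v\in\p g(F(\bar x))}\p\ip{v}{F}(\bar x),
\]
and by Lemma \ref{lem:SubAux} a) one has $\p g(F(\bar x))\subset -K^\circ=\R^m_+$, so the union in the subdifferential formula indeed ranges only over $v\ge 0$.

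Finally I would translate $\ip{v}{F}$ into $\sum_{i=1}^m v_iF_i$. For $v\ge 0$ we have $\ip{v}{F}(x)=\sum_{i=1}^m v_iF_i(x)$ for $x\in\dom F$ and $\ip{v}{F}(x)=+\infty$ otherwise, so interpreting $\sum_i v_iF_i$ as the function with domain $\dom F=\bigcap_i\dom F_i$ gives $\ip{v}{F}=\sum_i v_iF_i$ and hence the conjugate formula. For the subdifferential, since $\bigcap_i\ri(\dom F_i)\ne\emptyset$ the convex sum rule \cite[Theorem~23.8]{Roc 70} applies and yields $\p\ip{v}{F}(\bar x)=\p\big(\sum_i v_iF_i\big)(\bar x)=\sum_{i=1}^m v_i\,\p F_i(\bar x)$, using $\p(v_iF_i)(\bar x)=v_i\p F_i(\bar x)$ for $v_i\ge 0$; combining with the union over $v\in\p g(F(\bar x))$ gives the displayed subdifferential formula. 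The only genuinely delicate point is the relative-interior bookkeeping — ensuring that \eqref{eq:CQ1simple} for $K=\R^m_+$ really coincides with the stated hypothesis via \cite[Theorem~6.5]{Roc 70} — together with the harmless convention that makes the domains of $\ip{v}{F}$ and of $\sum_i v_iF_i$ agree when some components $v_i$ vanish.
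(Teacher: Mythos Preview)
Your proposal is correct and follows exactly the paper's approach: apply Corollary \ref{cor:Conj1} with $K=\R^m_+$, invoke Lemma \ref{lem:SubAux} a) to confine $\p g(F(\bar x))$ to $\R^m_+$, and then use the subdifferential sum rule. You simply make explicit what the paper leaves implicit---verifying \eqref{eq:CQ1simple} via \cite[Theorem~6.5]{Roc 70} and noting the domain convention for $\sum_i v_iF_i$ when some $v_i$ vanish---which is a welcome addition.
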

\begin{proof} Apply Corollary  \ref{cor:Conj1} with $K=\R^m_+$ and observe that $\p g(F(\bar x))\subset \R_+^m$, cf. Lemma \ref{lem:SubAux} a), hence $\p\ip{v}{F}(\bar x)=\sum_{i=1}^mv_i \p F_i(\bar x)$ for all $v\in g(F(\bar x)).$
\end{proof}

\noindent 
Finally, as another immediate consequence of our study, we get the well-known result for the case when $F$ is linear. 

\begin{corollary}[The linear case]\label{eq:FLinear} Let $g\in \Gamma(\bE_2)$ and $F:\bE_1\to \bE_2$  linear such that 
$\rge F\cap \ri(\dom g)\neq \emptyset$. Then 
\[
(g\circ F)^*(p)=\min_{v\in\bE_2} \set{g^*(v)}{F^*(v)=p}
\]
with $\dom (g\circ F)=(F^*)^{-1}(\dom g^*)$, and 
\[
\p(g\circ F)(\bar x)=F^*\left(\p g(F(\bar x))\right)\quad (\bar x\in \dom g\circ F).
\]

\end{corollary}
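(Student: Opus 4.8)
The plan is to derive this as a direct specialization of Corollary \ref{cor:Conj1} to the trivial cone $K=\{0\}$. First I would verify the hypotheses of that corollary for this choice of $K$: the cone $\{0\}$ is closed and convex; a linear map $F$ is $\{0\}$-convex because $\Epi{\{0\}}{F}=\gph F$ is a linear subspace, hence convex; and every function $g$ is (vacuously) $\{0\}$-increasing since $u\leq_{\{0\}}v$ forces $u=v$. Because $\dom F=\bE_1$ we have $\ri(\dom F)=\bE_1$, so $F(\ri(\dom F))=\rge F$ and the qualification condition \eqref{eq:CQ1simple} becomes exactly the standing hypothesis $\rge F\cap\ri(\dom g)\neq\emptyset$. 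Noting that $K^\circ=\bE_2$, Corollary \ref{cor:Conj1} then yields
\[
(g\circ F)^*(p)=\min_{v\in\bE_2} g^*(v)+\ip{v}{F}^*(p)
\quad\text{and}\quad
\p(g\circ F)(\bar x)=\bigcup_{v\in\p g(F(\bar x))}\p\ip{v}{F}(\bar x).
\]

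Next I would simplify both formulas using the linearity of $F$. By Lemma \ref{lem:Support} c), $\ip{v}{F}^*=\delta_{\{F^*(v)\}}$, so for fixed $v$ the summand $g^*(v)+\ip{v}{F}^*(p)$ equals $g^*(v)$ if $p=F^*(v)$ and $+\infty$ otherwise; minimizing over $v$ gives $(g\circ F)^*(p)=\min\set{g^*(v)}{F^*(v)=p}$, with the convention that this is $+\infty$ when no such $v$ exists. The domain description is then immediate from Theorem \ref{th:ConjMain} c): $p\in\dom(g\circ F)^*$ iff there is $v\in\dom g^*$ with $\ip{v}{F}^*(p)<+\infty$, i.e. with $F^*(v)=p$; that is, $\dom(g\circ F)^*=F^*(\dom g^*)$. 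For the subdifferential, $\ip{v}{F}=\ip{F^*(v)}{\cdot}$ is a continuous linear functional, so $\p\ip{v}{F}(\bar x)=\{F^*(v)\}$ (see also Remark \ref{rem:DiffScalar}), and hence
\[
\p(g\circ F)(\bar x)=\bigcup_{v\in\p g(F(\bar x))}\{F^*(v)\}=F^*\big(\p g(F(\bar x))\big).
\]

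There is no genuine obstacle in this argument; it is a bookkeeping exercise once the right ambient corollary is identified. The only point worth flagging is that $g$ is assumed only to lie in $\Gamma(\bE_2)$, not in $\Gamma_0(\bE_2)$, which is precisely why one should route the proof through Corollary \ref{cor:Conj1} (valid for $g\in\Gamma$ under a $K$-increasing hypothesis) rather than Corollary \ref{cor:Conj2}, and why the trivial cone $K=\{0\}$ is the natural choice: it renders the $K$-increasing condition automatic and collapses \eqref{eq:CQ1simple} to the hypothesis actually assumed.
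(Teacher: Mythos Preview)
Your proposal is correct and follows essentially the same route as the paper: apply Corollary \ref{cor:Conj1} with $K=\{0\}$, so that $-K^\circ=\bE_2$ and the qualification condition \eqref{eq:CQ1simple} collapses to $\rge F\cap\ri(\dom g)\neq\emptyset$, then simplify via Lemma \ref{lem:Support} c) and the identity $\p\ip{v}{F}(\bar x)=\{F^*(v)\}$. Your write-up is in fact more careful than the paper's own proof in explicitly checking the $\{0\}$-convexity and $\{0\}$-increasing hypotheses, and your remark about why $K=\{0\}$ is the natural choice given $g\in\Gamma$ (not $\Gamma_0$) is a nice touch.
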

\begin{proof} We notice that $F$ is $\{0\}$-convex. Hence we can apply Corollary  \ref{cor:Conj1} with $K=\{0\}$. Condition \eqref{eq:CQ1simple} then reads $\rge F\cap \ri(\dom g)\neq \emptyset$, which is our assumption. Hence we obtain
\[
(g\circ F)^*(p)=\min_{v\in -K^\circ} g^*(v)+\ip{v}{F}^*{p}=\min_{v\in \bE_2} g^*(v)+\delta_{F^*(v)}(p).
\]
where the second identity uses Lemma \ref{lem:Support}.  Moreover, $\p\ip{v}{F}(\bar x)=F^*(v)$ for all $v\in \bE_2$, which proves the subdifferential formula.
 \end{proof}

\section{Applications}\label{sec:App}

\noindent
In this section we present an  eclectic series of   applications of our study in Section \ref{sec:KConv} and  \ref{sec:Main} to illustrate the versatility of our findings and to establish connections to different areas of convex analysis, optimization and matrix analysis.

\subsection{Conic programming} \label{sec:ConicProg}

We consider the general  conic program \cite{ShS 03}
\begin{equation}\label{eq:CP}
\min f(x)\st F(x)\in -K 
\end{equation}
where $f\in \Gamma(\bE_1)$, $F:\bE_1\to \bE_2$ is $K$-convex and $K\subset \bE_2$ is a closed, convex cone.    Clearly, \eqref{eq:CP} can be written in the additive composite form 
\begin{equation}\label{eq:PFen}
\min_{x\in\bE_1} f(x)+(\delta_{-K}\circ F)(x).
\end{equation}
This fits the additive composite setting of  Corollary \ref{cor:AddConj} with $g=\delta_{-K}$ which is $K$-increasing (in fact, let $u\leq_Kv$, if $v\in -K$ then $u=u-v+v\in -K-K=-K$ and hence, in all cases, $g(u)\leq g(v)$). Moreover, the qualification condition \eqref{eq:CQ2}  reads
\begin{equation}\label{eq:Slater}
F(\ri(\dom f))\cap \ri(-K)\neq\emptyset.
\end{equation}
The Fenchel(-Rockafellar) dual problem associated with \eqref{eq:CP} via \eqref{eq:PFen} is 
\[
\max_{y\in \bE_1} -f^*(y)-(\delta_{-K}\circ F)^*(-y),
\]
while the Lagrangian dual is 
\[
\max_{v\in -K^\circ} \inf_{x\in \bE_1} f(x)+\ip{v}{F(x)}.
\]
We obtain the following duality result. 
\begin{theorem}[Strong duality and dual attainment for conic programming]\label{th:CP} Let $f\in \Gamma(\bE_1)$, $K\subset \bE_2$ a closed, convex cone, and  let $F:\bE_1\to \bE_2$ be $K$-convex. If \eqref{eq:Slater} holds then 
\[
\begin{aligned}
\inf_{x\in \bE_1} f(x)+(\delta_{-K}\circ F)(x) &= \max_{v\in -K^{\circ}}  -f^*(y)-(\delta_{-K}\circ F)^*(-y)\\
&=\max_{v\in -K^\circ} \inf_{x\in \bE_1}  f(x)+\ip{v}{F(x)}.
\end{aligned}
\]
\end{theorem}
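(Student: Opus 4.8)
The plan is to derive everything from the additive composite machinery of Corollary \ref{cor:AddConj} applied to $g=\delta_{-K}$, together with the elementary facts that $\delta_{-K}^*=\delta_{-K^\circ}=\sigma_{-K}$ and that $\delta_{-K}$ is $K$-increasing (verified in the text preceding the theorem). First I would record that $\delta_{-K}\circ F$ is convex: since $F$ is $K$-convex and $\delta_{-K}$ is $K$-increasing, condition \eqref{eq:FKMonotone} holds trivially, so $\delta_{-K}\circ F\in\Gamma(\bE_1)$ by Proposition \ref{prop:composite1}, and hence $f+\delta_{-K}\circ F\in\Gamma(\bE_1)$. The Slater condition \eqref{eq:Slater} is exactly the specialization of \eqref{eq:CQ2} to this setting (using $\dom F=\bE_1$, $g=\delta_{-K}$ so $\dom g-K=-K-K=-K$, and $\ri(-K)=-\ri K$), so all hypotheses of Corollary \ref{cor:AddConj}(a) are met.

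The core computation is the chain of equalities for the optimal value. By the general identity $\inf_x\Phi(x)=-\Phi^*(0)$ valid for any proper convex $\Phi$ (here $\Phi=f+\delta_{-K}\circ F$), I would evaluate $(f+\delta_{-K}\circ F)^*(0)$ using the formula from Corollary \ref{cor:AddConj}(a):
\[
(f+\delta_{-K}\circ F)^*(0)=\min_{\substack{v\in -K^\circ\\ y\in\bE_1}}\;\delta_{-K}^*(v)+f^*(y)+\ip{v}{F}^*(-y).
\]
Since $\delta_{-K}^*(v)=\delta_{-K^\circ}(v)$, which is $0$ on $-K^\circ$ and $+\infty$ otherwise, the outer constraint $v\in -K^\circ$ already absorbs this term, leaving $\min_{v\in -K^\circ,\,y}\{f^*(y)+\ip{v}{F}^*(-y)\}$. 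Negating and using $-\min = \sup(-\,\cdot\,)$ and $\ip{v}{F}^*(-y)=\sup_x\{\ip{-y}{x}-\ip{v}{F(x)}\}$, this rearranges to $\max_{v\in -K^\circ}\max_y\{-f^*(y)-\ip{v}{F}^*(-y)\}$. Recognizing $\max_y\{-f^*(y)-\ip{v}{F}^*(-y)\} = -(f^*\Box\ip{v}{F}^*)(0)$, which by Theorem \ref{th:AttBre} (whose relative-interior hypothesis follows from \eqref{eq:Slater}, since $\ri(\dom f)\cap\ri(\dom\ip{v}{F})=\ri(\dom f)\cap\bE_1\neq\emptyset$) equals $-(f+\ip{v}{F})^*(0)=\inf_x\{f(x)+\ip{v}{F(x)}\}$, gives the Lagrangian dual expression $\max_{v\in -K^\circ}\inf_{x\in\bE_1}f(x)+\ip{v}{F(x)}$. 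Simultaneously, the Fenchel-dual expression $\max_{v\in -K^\circ}-f^*(y)-(\delta_{-K}\circ F)^*(-y)$ is just the intermediate line in the computation \eqref{eq:conj_comp} of Corollary \ref{cor:AddConj}, before the inner infimal convolution is expanded; so both dual formulations appear as two stages of the same identity, and attainment of the outer $\max$ over $v$ is precisely the "$\min$ is attained" clause of Corollary \ref{cor:AddConj}(a).

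The main obstacle I anticipate is purely bookkeeping rather than conceptual: one must check carefully that the two separate minimizations (over $y$ and over $v$) can be reordered and that the inner one is genuinely attained, so that the "$\min$" in Corollary \ref{cor:AddConj}(a) translates into a "$\max$" on the dual side without a gap — this is where the relative-interior consequence of \eqref{eq:Slater}, namely $\ri(\dom f)\cap\ri(\dom F)\neq\emptyset$, is used, exactly as in the proof of Corollary \ref{cor:AddConj}(a). A minor point to state cleanly is the translation between $\ri(\dom\delta_{-K}-K)=\ri(-K)$ and the phrasing of \eqref{eq:Slater}, and the fact that $F$ here is finite-valued (so $\ri(\dom F)=\bE_1$), which streamlines the qualification condition to the stated form. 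Once these identifications are in place, the theorem follows by simply reading off the two dual expressions from the established conjugate formula and invoking $\inf\Phi=-\Phi^*(0)$.
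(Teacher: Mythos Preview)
Your proposal is correct and follows essentially the same route as the paper: apply Corollary~\ref{cor:AddConj}(a) with $g=\delta_{-K}$, use $\delta_{-K}^*=\delta_{-K^\circ}$ to eliminate the $g^*$ term, and read off both dual expressions from the resulting joint minimization, invoking $\inf\Phi=-\Phi^*(0)$. The one place where the paper is slightly crisper is the Fenchel-dual identification: rather than pointing to an ``intermediate line'' of \eqref{eq:conj_comp}, the paper invokes Theorem~\ref{th:ConjMain} directly (with $g=\delta_{-K}$, $f$ absent) to get $(\delta_{-K}\circ F)^*(-y)=\min_{v\in -K^\circ}\ip{v}{F}^*(-y)$ for every $y$, which is exactly what is needed to match $\max_{y}\{-f^*(y)-(\delta_{-K}\circ F)^*(-y)\}$ with $\max_{v,y}\{-f^*(y)-\ip{v}{F}^*(-y)\}$; you should make that invocation explicit.
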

\begin{proof}
Observe that, by Corollary \ref{cor:AddConj}, we have 
\begin{eqnarray*}
(f+\delta_{-K}\circ F)^*(0)& = & \min_{\stackrel{v\in -K^\circ,}{y\in \bE_1}}f^*(y)+\ip{v}{F}^*(-y)\\
& = & \min_{v\in -K^\circ} (f+\ip{v}{F})^*(0)\\
& = & \min_{v\in -K^\circ} \sup_{x\in \bE_1} \{-f(x)-\ip{v}{F(x)}\}.
\end{eqnarray*}
Moreover, by Theorem \ref{th:ConjMain} we have 
\[
(\delta_{-K}\circ F)^*(-y)=\min_{v\in -K^\circ} \ip{v}{F}^*(-y).
\]
Finally, since $\inf_{\bE_1} f+\delta_{-K}\circ F=-(f+\delta_{-K}\circ F)^*(0)$, this shows everything.
\end{proof}

\noindent
Theorem \ref{th:CP} furnishes  various  facts:  It shows strong duality and dual attainment for  both the Fenchel duality   (first identity) and Lagrangian duality (second identity) scheme under a generalized Slater condition. Moreover, it shows the equivalence of both duality concepts. Of course, these are well-known results, but the proof based on Corollary \ref{cor:AddConj} and  Theorem \ref{th:ConjMain}, respectively,  unifies this in a very elegant and convenient way.

Optimality conditions are also easily derived. Here observe that for a convex set $C\subset \bE$, the {\em normal cone} of $C$ at $\bar x\in C$ is 
\[
N_C(\bar x)=\p \delta_C(\bar x)=\set{v}{\ip{v}{x-\bar x}\leq 0\;(x\in C)}.
\] 

\begin{theorem}\label{th:OptCP} Let $f\in \Gamma(\bE_1)$, $K\subset \bE_2$ a closed, convex cone, and  let $F:\bE_1\to \bE_2$ be $K$-convex.  Then the condition 
\begin{equation} \label{eq:OptCP}0\in \p f(\bar x) +\bigcup_{v\in N_{-K}(F(\bar x))}\p\ip{v}{F}(\bar x)
\end{equation}
is sufficient  for $\bar x$ to be a  minimizer of \eqref{eq:CP}. Under \eqref{eq:Slater} it is also necessary. 
\end{theorem}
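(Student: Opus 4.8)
The plan is to recognize that Theorem \ref{th:OptCP} is just Fermat's rule applied to the reformulation \eqref{eq:PFen} together with the subdifferential calculus already established. First I would recall that $\bar x$ minimizes \eqref{eq:CP} if and only if $\bar x$ minimizes the unconstrained convex function $\Phi := f + \delta_{-K}\circ F$ over $\bE_1$, and, since $\Phi\in\Gamma(\bE_1)$, this is equivalent to the Fermat condition $0\in\p\Phi(\bar x)$. Here I use that feasibility of $\bar x$ (i.e. $F(\bar x)\in -K$) is built into $\bar x\in\dom\Phi$, and that $\Phi$ is indeed convex because $g=\delta_{-K}$ is $K$-increasing and $F$ is $K$-convex, so Proposition \ref{prop:composite1} b) applies (condition \eqref{eq:FKMonotone} holds trivially).

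Next I would invoke Corollary \ref{cor:AddConj} b) with $g=\delta_{-K}$. The inclusion direction always available there gives
\[
\p\Phi(\bar x)\supset \p f(\bar x) + \bigcup_{v\in\p g(F(\bar x))}\p\ip{v}{F}(\bar x),
\]
and the reverse inclusion holds under the qualification condition \eqref{eq:CQ2}, which for $g=\delta_{-K}$ (so $\dom g-K=-K-K=-K$) reduces precisely to the Slater condition \eqref{eq:Slater}: $F(\ri(\dom f)\cap\ri(\dom F))\cap\ri(-K)\neq\emptyset$, and since $F$ is finite-valued $\dom F=\bE_1$ so this is $F(\ri(\dom f))\cap\ri(-K)\neq\emptyset$. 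The only remaining identification is $\p\delta_{-K}(F(\bar x)) = N_{-K}(F(\bar x))$, which is the definition of the normal cone recalled just before the statement. Substituting, the condition $0\in\p\Phi(\bar x)$ becomes exactly \eqref{eq:OptCP}; it is sufficient in general and, under \eqref{eq:Slater}, also necessary.

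The main (and essentially only) obstacle is bookkeeping: verifying that the abstract hypotheses of Corollary \ref{cor:AddConj} specialize correctly, namely that $\delta_{-K}\in\Gamma(\bE_2)$ is $K$-increasing (already observed in the paragraph preceding Theorem \ref{th:CP}), that \eqref{eq:FKMonotone} is automatic, and that \eqref{eq:CQ2} collapses to \eqref{eq:Slater} because $\dom\delta_{-K}-K=-K$ is a cone (hence $\ri(\dom g-K)=\ri(-K)$) and $\dom F=\bE_1$. Once these reductions are in place, the proof is a one-line application of Fermat's rule plus Corollary \ref{cor:AddConj} b), with no further computation needed. I would write it in roughly four sentences, citing Proposition \ref{prop:composite1}, Corollary \ref{cor:AddConj}, and the normal-cone definition, and noting explicitly that sufficiency requires no qualification condition while necessity uses \eqref{eq:Slater}.
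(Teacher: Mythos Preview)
Your proposal is correct and follows essentially the same route as the paper: apply Fermat's rule to $\Phi=f+\delta_{-K}\circ F$ and invoke Corollary~\ref{cor:AddConj}~b) with $g=\delta_{-K}$, using the always-valid inclusion for sufficiency and the equality under \eqref{eq:CQ2}${}={}$\eqref{eq:Slater} for necessity. The only difference is that you spell out the bookkeeping (convexity of $\Phi$, the reduction of \eqref{eq:CQ2} to \eqref{eq:Slater}, and $\p\delta_{-K}=N_{-K}$) that the paper leaves implicit or records in the surrounding discussion.
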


\begin{proof}  From  Corollary \ref{cor:AddConj} b) with $g=\delta_{-K}$,  \eqref{eq:OptCP}  it follows that $0\in \p (f+g\circ F)(\bar x)$, so $\bar x$ is a minimizer of $f+g\circ F$. 
Under   \eqref{eq:Slater}, again  by Corollary \ref{cor:AddConj} b),  $\p (f+g\circ F)(\bar x)$ equals the set on the right-hand side of \eqref{eq:OptCP}, which concludes the proof.
\end{proof}

\noindent
The differentiable case merits its own statement. 

\begin{corollary}\label{cor:OptCP}  
 Let $f:\bE_1\to \R$ be differentiable and  convex, $K\subset \bE_2$ a  closed, convex cone, and  let $F:\bE_1\to \bE_2$ be differentiable and  $K$-convex.  Then the condition 
\begin{equation}\label{eq:OptCPDiff} 
-\nabla f(\bar x)\in F'(\bar x)^*N_{-K}(F(\bar x))
\end{equation}
is sufficient  for $\bar x$ to be a  minimizer of \eqref{eq:CP}. Under the condition
\begin{equation}\label{eq:Slater2}
\rge F\cap \ri(-K)\neq \emptyset
\end{equation}
it is also necessary. 
\end{corollary}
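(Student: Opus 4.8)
The plan is to derive this as the differentiable specialization of Theorem \ref{th:OptCP}. First I would record the two simplifications that the smoothness hypotheses buy us. Since $f:\bE_1\to\R$ is differentiable it is finite-valued, so $\dom f=\bE_1$ and $\p f(\bar x)=\{\nabla f(\bar x)\}$; moreover the Slater condition \eqref{eq:Slater} of Theorem \ref{th:OptCP}, namely $F(\ri(\dom f))\cap\ri(-K)\neq\emptyset$, collapses to $\rge F\cap\ri(-K)\neq\emptyset$, which is exactly \eqref{eq:Slater2}.

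Next I would rewrite the union term $\bigcup_{v\in N_{-K}(F(\bar x))}\p\ip{v}{F}(\bar x)$ appearing in \eqref{eq:OptCP}. The point is that $N_{-K}(F(\bar x))\subset(-K)^\circ=-K^\circ$: for $v\in N_{-K}(F(\bar x))$, testing the defining inequality of the normal cone at the points $0\in-K$ and $2F(\bar x)\in-K$ forces $\ip{v}{F(\bar x)}=0$, and then $\ip{v}{x}=\ip{v}{x-F(\bar x)}\le0$ for all $x\in-K$. Hence every $v$ occurring in the union lies in $-K^\circ$, so by Remark \ref{rem:DiffScalar} the scalarization $\ip{v}{F}$ is convex and continuously differentiable with $\nabla\ip{v}{F}(\bar x)=F'(\bar x)^*v$, whence $\p\ip{v}{F}(\bar x)=\{F'(\bar x)^*v\}$ and
\[
\bigcup_{v\in N_{-K}(F(\bar x))}\p\ip{v}{F}(\bar x)=F'(\bar x)^*N_{-K}(F(\bar x)).
\]

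Putting the two reductions together, condition \eqref{eq:OptCP} becomes $0\in\nabla f(\bar x)+F'(\bar x)^*N_{-K}(F(\bar x))$, i.e.\ $-\nabla f(\bar x)\in F'(\bar x)^*N_{-K}(F(\bar x))$, which is \eqref{eq:OptCPDiff}. Sufficiency for $\bar x$ to solve \eqref{eq:CP} then follows from the sufficiency half of Theorem \ref{th:OptCP} (which needs no qualification condition), and under \eqref{eq:Slater2}---equivalently \eqref{eq:Slater} in this setting---necessity follows from its necessity half. The only step requiring an actual argument is the inclusion $N_{-K}(F(\bar x))\subset-K^\circ$, which is precisely what makes Remark \ref{rem:DiffScalar} applicable; I expect this to be the (mild) crux, everything else being bookkeeping.
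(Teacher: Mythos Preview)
Your proof is correct and follows the same approach as the paper, which simply says the result is immediate from Theorem \ref{th:OptCP}, Remark \ref{rem:DiffScalar}, and the reduction of \eqref{eq:Slater} to \eqref{eq:Slater2} when $\dom f=\bE_1$. The one detail you make explicit---that $N_{-K}(F(\bar x))\subset -K^\circ$, needed to invoke Remark \ref{rem:DiffScalar}---is left implicit in the paper but also follows from Lemma \ref{lem:SubAux} a) applied to $g=\delta_{-K}$, which is $K$-increasing.
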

\begin{proof} This follows immediately from Theorem \ref{th:OptCP}, Remark \ref{rem:DiffScalar} and the fact that  \eqref{eq:Slater} reduces to  \eqref{eq:Slater2} when $\dom f=\bE_1$.
\end{proof}

\subsection{The pointwise maximum of convex functions} 

In what follows we denote the unit simplex in $\R^m$ by $\Delta_m$, i.e.
$$
\Delta_m=\set{v\in\R^m}{v_i\geq 0, \; \sum_{i=1}^mv_i=1}.$$ The following result provides the conjugate 
formula for  the pointwise maximum of finitely many convex functions. It therefore  slightly generalizes (at least in the finite dimensional case) the results established for the case of two functions in \cite{FiS 00}  and alternatively proven in \cite{BoW 08}. The well-known subdifferential formula is also derived.

\begin{proposition}  For $f_1,\dots,f_m\in \Gamma_0(\bE)$ define $f:=\displaystyle\max_{i=1,\dots,m} f_i$. Then $f\in \Gamma_0(\bE)$ with  $\dom f=\bigcap_{i=1}^m \dom f_i$, 
\[
f^*(x)=\min_{v\in\Delta_m}\bigg(\sum_{i=1}^mv_if_i\bigg)^*(x)
\]
and 
\[
\p f(\bar x)= \co \set{\p f_i(\bar x)}{i\in I(\bar x)},
\]
where $I(\bar x)=\set{i}{f_i(\bar x)=f(\bar x)}$.
\end{proposition}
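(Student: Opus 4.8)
The plan is to realize the pointwise maximum as a convex-composite function $g \circ F$ and apply Corollary \ref{cor:Conj1}. Set $\bE_2 = \R^m$, $K = \R^m_+$, and define $F : \bE \to (\R^m)^\bullet$ by $F(x) = (f_1(x), \dots, f_m(x))$ with the convention that $F(x) = +\infty_\bullet$ whenever some $f_i(x) = +\infty$; then $\dom F = \bigcap_{i=1}^m \dom f_i$ and, since each $f_i$ is convex, $F$ is $\R^m_+$-convex (its $K$-epigraph is $\bigcap_i \{(x,v) : f_i(x) \le v_i\}$, an intersection of convex sets). Let $g : \R^m \to \R$ be $g(v) = \max_{i=1,\dots,m} v_i$, which lies in $\Gamma_0(\R^m)$, is finite-valued, and is $\R^m_+$-increasing. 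Then $f = g \circ F$, and since $g$ is finite-valued, condition \eqref{eq:CQ1simple} is automatic, so Corollary \ref{cor:Conj1} applies directly.

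First I would record the standard facts about $g = \max$: its conjugate is $g^* = \delta_{\Delta_m}$ (the support function identity $\sigma_{\Delta_m} = \max$ reversed), and its subdifferential at a point $y$ is $\p g(y) = \co\set{e_i}{i \in \argmax_j y_j}$, where $e_i$ are the standard basis vectors; in particular $\p g(y) \subset \Delta_m \subset \R^m_+$, consistent with Lemma \ref{lem:SubAux} a). Next, Corollary \ref{cor:Conj1} gives
\[
f^*(p) = \min_{v \in -K^\circ} g^*(v) + \ip{v}{F}^*(p) = \min_{v \in \R^m_+} \delta_{\Delta_m}(v) + \Big(\sum_{i=1}^m v_i f_i\Big)^*(p) = \min_{v \in \Delta_m} \Big(\sum_{i=1}^m v_i f_i\Big)^*(p),
\]
where I use $-K^\circ = -(\R^m_+)^\circ = \R^m_+$ and $\ip{v}{F} = \sum_i v_i f_i$ on $\dom F$. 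This is the claimed conjugate formula.

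For the subdifferential, Corollary \ref{cor:Conj1} yields $\p f(\bar x) = \bigcup_{v \in \p g(F(\bar x))} \p\ip{v}{F}(\bar x)$. Now $\p g(F(\bar x)) = \co\set{e_i}{i \in I(\bar x)}$ by the formula above, since $i \in I(\bar x)$ exactly when $f_i(\bar x) = f(\bar x) = g(F(\bar x)) = \max_j f_j(\bar x)$. For $v = \sum_{i \in I(\bar x)} v_i e_i \in \Delta_m$ supported on $I(\bar x)$, we have $\ip{v}{F} = \sum_{i \in I(\bar x)} v_i f_i$ and, by the Moreau--Rockafellar sum rule (each $f_i$ is finite at $\bar x$ since $f(\bar x) < \infty$, so relative interiors of domains near $\bar x$ cause no issue — more precisely $\bar x \in \bigcap_i \dom f_i$ and one invokes \cite[Theorem 23.8]{Roc 70} together with the convex-hull description), $\p\ip{v}{F}(\bar x) = \sum_{i \in I(\bar x)} v_i \p f_i(\bar x)$. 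Taking the union over all such $v$ in the simplex on $I(\bar x)$ produces exactly $\co\set{\p f_i(\bar x)}{i \in I(\bar x)}$, giving the subdifferential formula. Finally, $f \in \Gamma_0(\bE)$ and $\dom f = \bigcap_i \dom f_i$ are immediate since a finite max of lsc proper convex functions is lsc proper convex with domain the intersection.

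The main obstacle I anticipate is the careful handling of the sum rule for $\p\ip{v}{F}(\bar x)$ when $v$ has zero components: one must argue that the terms with $v_i = 0$ contribute $\{0\}$ (as $0 \cdot f_i$ is the indicator of $\dom f_i$ near $\bar x$, or one simply restricts the sum to $i \in I(\bar x)$ from the outset via the support of $v$), and that the qualification condition for \cite[Theorem 23.8]{Roc 70} holds — which it does, since all the $f_i$ are finite (hence continuous) at $\bar x$ and more generally on a neighborhood argument is unnecessary because $\bar x$ being in every domain suffices once we note finiteness. A secondary point worth stating cleanly is that $\ip{v}{F}$ as defined in the paper has domain $\dom F = \bigcap_i \dom f_i$ for every $v \in \R^m_+$, which is needed to match $\bigl(\sum_i v_i f_i\bigr)^*$ with $\ip{v}{F}^*$.
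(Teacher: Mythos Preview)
Your proposal is correct and follows essentially the same route as the paper: both realize $f=g\circ F$ with $F=(f_1,\dots,f_m)$, $g=\max$, $K=\R^m_+$, note that $g$ is finite-valued and $K$-increasing so Corollary~\ref{cor:Conj1} applies with \eqref{eq:CQ1simple} trivially satisfied, use $g^*=\delta_{\Delta_m}$ for the conjugate, and use $\partial g(y)=\co\{e_i: y_i=g(y)\}$ for the subdifferential. Your added remarks on the Moreau--Rockafellar sum rule and on the domain of $\ip{v}{F}$ versus $\sum_i v_i f_i$ make explicit points the paper leaves implicit, but the overall argument is the same.
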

\begin{proof}
Define $F:\bE\to (\R^m)^\bullet$ by 
\[
F(x)= \begin{cases}
(f_1(x), \ldots, f_m(x))&\text{ if } x\in \bigcap_{i=1}^m\dom f_i,\\
+\infty_\bullet &\text{ otherwise},
\end{cases}
\] 
and $g\colon\R^m\to\R$ by   $g(v)=\max_{i=1,\ldots, m}v_i$. Then  $f=g\circ F$ (with the conventions made in Section \ref{sec:Prelim}) and we observe that  $F$ is $\R^m_{+}$-convex and $g$ is $\R^m_+$-increasing with $\dom g=\R^m$. Hence,  Corollary \ref{cor:Conj1}  is applicable with the qualification condition \eqref{eq:CQ1} trivially satisfied. Thus, for all $x\in\bE$,  we obtain
\[
\begin{aligned}
(g\circ F)^*(x)
&=\min_{v\in \R^m_+}g^*(v)+\ip{v}{F}^*(x)\\
&=\min_{v\in \R^m_+}\delta_{\Delta_m}(v)+\ip{v}{F}^*(x)\\
&=\min_{v\in\Delta_m}\bigg(\sum_{i=1}^mv_if_i\bigg)^*(x)
\end{aligned}
\]
where the second equality follows from \cite[Example~4.10]{Beck17}. Moreover 
\begin{eqnarray*} 
\p f(\bar x) & = & \bigcup_{v\in \p g(F(\bar x))} \p \ip{v}{F}(\bar x)\\
& = & \bigcup_{v\in \co \set{e_i}{i\in I(\bar x)}} \sum_{i=1}^m v_i\p f_i(\bar x)\\
& = & \co \set{\p f_i(\bar x)}{i\in I(\bar x)}.
\end{eqnarray*}
Here the second identity uses the known fact that $\p g(y)=\co \set{e_i}{y_i=g(y)}$ for all $y\in \R^m$.
\end{proof}

\subsection{Kiefer-Gaffke-Krafte inequality and the matrix-fractional function}\label{sec:Kiefer}
Recall that   $\bC^{n\times m}$ and $\bH^n$ are   the linear spaces of complex $n\times m$ and Hermitian $n\times n$ matrices, respectively.  Consider the  Euclidean space
\begin{equation}\label{eq:G}
\bG:=\bC^{n\times m}\times  \bH^n
\end{equation}
equipped with the inner product 
\begin{equation}\label{eq:ip for G}
\ip{\cdot}{\cdot}:\left((X,U),(Y,V)\right)\in \bG\times \bG\mapsto \Real\tr(Y^*X)+\Real\tr(VU).
\end{equation}
Define the mapping $\map{F}{\bG}{(\bH^n)^\bullet}$ by
\begin{equation}\label{eq:FLA}
F(X,V):=\left\{\begin{array}{rcl} X^*V^\dagger X  & \text{if} & \rge X\subset \rge V,\\
+\infty_\bullet, & \text{else}, \end{array}\right.
\end{equation}
where $X^*$ is the adjoint  of $X$ and $V^\dag$ is
the Moore-Penrose pseudoinverse of $V$, cf. \cite{HJ 13}.
Kiefer \cite{Kie59} has shown that $F$ is $\bH^n_+$-convex on the set
\[
\cD:=\set{(X,V)\in\bC^{n\times m}\times \bH^n_{++}}{\rge X\subset \rge V},
\]
and that   
\begin{eqnarray}
&F\left(\sum_{i=1}^k\lam_i(X_i,V_i)\right)=\sum_{i=1}^k\lam_iF(X_i,V_i)\quad \left(\lambda\in \Delta_k, (X_i,V_i)\in \cD\right)\label{eq:equality for KGK} \\
\Longleftrightarrow  &  X_i^*V_i^{-1}=X_i^*V_j^{-1}\quad (i,j\in\{1,\dots,k\}).
\end{eqnarray}
Gaffke and Krafte \cite{GaK} extend this result by 
showing that $F$ is convex on the set 
\[
\cl\cD=\set{(X,V)\in\bC^{n\times m}\times \bH^n_{+}}{\rge X\subset \rge V}
\]
again with respect to the cone $\bH^n_+$,
and \eqref{eq:equality for KGK} holds 
for a convex combination $\sum_{i=1}^k\lam_i(X_i,V_i)$ of points in $\cl\cD$
if and only if there is a matrix $C\in\bC^{n\times m}$ such that
$X_i=V_iC$ for $i=1,\dots,k$.

The $\bH^n_+$-convexity of the mapping $F$ from \eqref{eq:FLA} is  closely tied to the 
\emph{matrix-fractional function} \cite{BGH17, BGH 18, BuH15}
$\gamma:\widehat\bG\to\bR\cup\{+\infty\}$ given by 
\begin{equation}\label{eq:gamma}
\gamma (X,V):=\left\{\begin{array}{rcl } 
\half \tr\left(F(X,V)\right), &{\rm if} & (X,V)\in\widehat\cD,\\
                      +\infty ,&{\rm else,}                    
\end{array}\right.
\end{equation}
where $\widehat\bG:\bR^{n\times m}\times \bS^n$ and 
$\widehat\cD:=
\set{(X,V)\in \widehat\bG}{ V  \in\bS^n_+, \; \rge X\subset \rge V}$.
In \cite[Section 5.2]{BuH15}, 
it is shown that $\gamma$ is the
support function of the set 
\[
\cF:=\set{(Y,-\half YY^T)}{Y\in\bR^{n\times m}}
\]
and so $\gamma$ is a closed, proper,  convex (even sublinear) function 
with domain $\widehat\cD$. However, Theorem \ref{thm:K convexity} can
be used to establish a new and stronger result concerning the convexity of 
$\gamma$.

\begin{proposition}\label{prop:extended mff} Let $\bG$ be given by \eqref{eq:G},    $\map{F}{\bG}{(\bH^n)^\bullet}$ by \eqref{eq:FLA} and define $\map{\tilde\gamma}{\bG}{\bR\cup\{+\infty\}}$ by
\begin{equation}\label{eq:tgamma}
\tilde\gamma (X,V):=\left\{\begin{array}{rcl } 
\half \tr\left(F(X,V)\right), &{\rm if} & (X,V)\in\cl\cD,\\
                      +\infty ,&{\rm else.}                    
\end{array}\right.
\end{equation}
Then  $\tgam$ is a support function, in particular closed, proper, and convex. 
\end{proposition}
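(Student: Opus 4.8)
The plan is to identify $\tilde\gamma$ explicitly as a support function on $\bG$; since the support function of a nonempty set is automatically closed, convex, and proper, this settles all three properties at once. Define
\[
\cF_\bC:=\set{\left(Y,\,-\tfrac12 YY^*\right)}{Y\in\bC^{n\times m}}\subset \bG,
\]
which is nonempty and lies in $\bG$ because $YY^*$ is Hermitian. The claim is that $\tilde\gamma=\sigma_{\cF_\bC}$. Using the inner product \eqref{eq:ip for G}, for $(X,V)\in\bG$ one has $\sigma_{\cF_\bC}(X,V)=\sup_{Y\in\bC^{n\times m}} q(Y)$, where, after one application of cyclicity of the trace, $q(Y):=\Real\tr(Y^*X)-\tfrac12\Real\tr(Y^*VY)$ is a quadratic function of $Y$.

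The heart of the argument is to evaluate this supremum and match it, case by case, to the two-piece definition \eqref{eq:tgamma} of $\tilde\gamma$. If $(X,V)\in\cl\cD$, i.e. $V\succeq 0$ and $\rge X\subseteq \rge V$, then $q$ is concave (as $\Real\tr(Y^*VY)\ge 0$) and its stationarity equation $VY=X$ is solved by $Y^\star:=V^\dagger X$ — here one uses the standard facts that $VV^\dagger$ is the orthogonal projector onto $\rge V$, so $\rge X\subseteq\rge V$ forces $VV^\dagger X=X$, together with $V^\dagger VV^\dagger=V^\dagger$; substituting $Y^\star$ gives $\sigma_{\cF_\bC}(X,V)=q(Y^\star)=\tfrac12\tr(X^*V^\dagger X)=\tilde\gamma(X,V)$ (the trace being real since $X^*V^\dagger X\succeq 0$). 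If instead $V\not\succeq 0$, pick a unit eigenvector $u$ of $V$ with eigenvalue $\lambda<0$ and a nonzero vector $w$ of the appropriate size, and evaluate $q$ along $Y=tuw^*$: the quadratic term equals $-\tfrac12 t^2\lambda\norm{w}^2>0$ and dominates the linear term, so $\sigma_{\cF_\bC}(X,V)=+\infty=\tilde\gamma(X,V)$. Finally, if $V\succeq 0$ but $\rge X\not\subseteq\rge V$, let $P$ be the orthogonal projector onto $\rge V$; then $(I-P)X\neq 0$ and $V(I-P)=0$, so along $Y=t(I-P)X$ the quadratic term vanishes while the linear term equals $t\,\fnorm{(I-P)X}^2\to+\infty$, whence again $\sigma_{\cF_\bC}(X,V)=+\infty=\tilde\gamma(X,V)$. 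This proves $\tilde\gamma=\sigma_{\cF_\bC}$, and the proposition follows.

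I expect the main obstacle to be the linear-algebraic bookkeeping in the first, generic case: correctly juggling the Moore--Penrose identities ($\rge X\subseteq\rge V\iff VV^\dagger X=X$ and $V^\dagger VV^\dagger=V^\dagger$) in tandem with the real-part trace inner product on the complex Hermitian space $\bG$, and confirming that $Y^\star=V^\dagger X$ is a genuine global maximizer of $q$ (which is precisely where concavity of $q$ for $V\succeq 0$ is needed); the two remaining cases are routine unboundedness checks. As a conceptual complement, the convexity of $\tilde\gamma$ alone can be read off directly from Theorem \ref{thm:K convexity}: the restriction of $F$ from \eqref{eq:FLA} to the convex cone $\cl\cD$ is $\bH^n_+$-convex by the Gaffke--Krafte result recalled above, and since $\bH^n_+$ is a self-dual closed convex cone, $\ip{v}{\cdot}\circ F$ is convex on $\cl\cD$ for every $v\in\bH^n_+$; taking $v=\tfrac12 I_n$ recovers exactly $\tilde\gamma$.
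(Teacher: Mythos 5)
Your proof is correct, but it takes a genuinely different route from the paper. You guess the generating set explicitly --- the complex analogue $\cF_{\bC}=\set{(Y,-\tfrac12 YY^*)}{Y\in\bC^{n\times m}}$ of the real set $\cF$ from \cite{BuH15} --- and verify $\tilde\gamma=\sigma_{\cF_\bC}$ by maximizing the concave quadratic $q(Y)=\Real\tr(Y^*X)-\tfrac12\Real\tr(Y^*VY)$ (the stationarity equation $VY=X$, solved by $Y^\star=V^\dagger X$ via $VV^\dagger X=X$ and $V^\dagger VV^\dagger=V^\dagger$), and by exhibiting directions of unboundedness in the two degenerate cases ($V\not\succeq 0$, or $\rge X\not\subset\rge V$); your case analysis and the trace/pseudoinverse manipulations check out. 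The paper instead proves convexity through its $K$-convexity machinery (Theorem \ref{thm:K convexity} with $g=\ip{\tfrac12 I}{\cdot}$ and the Kiefer--Gaffke--Krafte $\bH^n_+$-convexity of $F$, exactly as in your closing remark), establishes positive homogeneity and properness directly, proves closedness by a sequential argument passing through reduced eigenvalue decompositions $V_k=U_kD_kU_k^*$ of a convergent sequence in $\epi\tilde\gamma$, and only then invokes H\"ormander's theorem to conclude abstractly that a closed, proper, sublinear function is a support function. Your route buys a sharper conclusion --- an explicit set whose support function is $\tilde\gamma$ --- and entirely sidesteps the delicate lower-semicontinuity limit argument, at the cost of having to produce the candidate set and handle the quadratic maximization by hand; the paper's route requires no such guess and illustrates the scalarization calculus that the paper is advertising, but yields the support-function property only nonconstructively.
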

\begin{proof}
Let  $\bE_1:=\bG$ and $\bE_2:=\bH^n$  equipped with the inner products based on \eqref{eq:ip for G} and set $g:=\ip{\half I}{\cdot}\in \Gamma(\bE_2)$. Then $\tgam=g\circ F$.
Since $\half I\in \bH^n_{++}=-(\bH^n_{++})^\circ$, Theorem 
\ref{thm:K convexity} tells us that $\tilde\gamma$ is convex.
Obviously $\tgam$ is also proper and  positively homogeneous, i.e. $\tgam(\alpha(X,V))=\alpha\tgam(X,V)$ for any $\alpha\geq 0$.  We now show  that it is closed:
To this end, let $\{((X_k,V_k),\mu_k)\}\subset\epi{\tgam}$ and 
$((\tX,\tV),\tmu)\in \bG\times\R$ be such that
$((X_k,V_k),\mu_k)\rightarrow ((\tX,\tV),\tmu)$. We need to show that
$((\tX,\tV),\tmu)\in\epi{\tgam}$. Let $V_k$ have reduced singular-value
decomposition $V_k=U_kD_kU^*_k$ for each $k\in\bN$ so that
$U_k\in\bC^{n\times r_k}$ with $r_k:=\rank{V_k}\le n$,
$U^*_kU_k=I_{r_k}$ and $D_k=\diag(\sig_{k1},\dots,\sig_{kr_k})$ with $\sig_{kj}\in\R_{++}$
for $j=1,\dots,r_k$ and all $k\in\bN$. 
Set $\tilde r:=\rank \tV$. With no loss in generality, there is a 
$r\in\{\tilde r,\tilde r+1,\dots,n\}$ and $\tU\in\bC^{n\times r}$ such that
$r_k=r$ for all $k\in\bN$ with 
$\tU^*\tU=I_r$ and $U_k\rightarrow \tU$. Then
$D_k=U_k^*V_kU_k\rightarrow \tU^*\tV\tU=:\tD=\diag(\sig_1,\dots,\sig_r)$
for some $\sig_j\in\R_+,\ j=1,\dots,r$ satisfying $\sig_{kj}\rightarrow \sig_j,\ j=1,\dots,r$.
Since $\rge{X_k}\subset\rge{V_k}$,
there exists $C_k\in\bC^{r\times m}$ 
such that $X_k=U_kC_k$, for all $k\in \bN$, with
$C_k=U_k^*X_k\rightarrow \tU^*\tX=:\tC$.
Denote the columns of $C_k^*$ by $c_{kj},\ j=1,\dots,r,\ k\in\bN$,
and those of $\tC^*$ by $\tilde c_j$, $j=1,\dots,r$. Then
\[
0\le \sum_{j=1}^r\sig_{kj}^{-1}\sqnorm{c_{kj}}=\tr (C_k^*D_k^{-1}C_k)=2 \tgam(X_k,V_k)\le 2\mu_k.
\]
Consequently,
\[
0\le \limsup_{k\rightarrow\infty}\sum_{j=1}^r\sig_{kj}^{-1}\sqnorm{c_{kj}}\le 2\tmu.
\]
Hence, by passing to a subsequence if necessary, we can assume that
there exists $\eta_j\in\R_+$ such that $\sig_{kj}^{-1}\sqnorm{c_{kj}}\rightarrow \eta_j$
for $j=1,\dots,r$. Observe that if, for some $j_0\in\{1,\dots, r\}$, $\sig_{kj}\rightarrow 0$, then it must be the case that $c_{kj_0}\rightarrow 0$ since $\eta_{j_0}\in\R_+$. 
Therefore,
$\rge\tX\subset\rge \tV$, so that $(\tX,\tV)\in\dom \tgam$ with 
\[
2\tgam(\tX,\tV)=\sum_{j=1}^r\sig_j^\dagger\sqnorm{\tilde c_{j}}\le 
\sum_{j=1}^r\eta_j\le 2\tmu.
\]
That is, $\tgam$ is closed. Here $\sig_j^\dagger = \sig_j^{-1}$ if $\sig_j>0$ and $0$ otherwise. 

All in all, we have shown that $\tgam$ is closed, properm convex and positively homogeneous. The claim therefore follows  from H\"ormander's Theorem, see e.g. \cite[Corollary 13.2.1]{Roc 70}.
\end{proof}


\subsection{Variational Gram functions} \label{sec:VGF}
We equip   $\bS^n$  with the inner product $\ip{X}{Y}=\tr(XY)$.  Given a set $M\subset \bS^n_+$, the associated {\em variational Gram function (VGF)} \cite{BGH 18,JFX 17}  is given by 
\[
\Omega_{M}:\R^{n\times m} \to \rp,\quad \Omega_M(X)=\frac{1}{2}\sig_{M}(XX^T).
\]
Since $\sig_M=\sig_{\clco M}$ there is no loss in generality to assume that $M$ is closed and convex. For the remainder of this paragraph we let 
\begin{equation}\label{eq:F}
F:\R^{n\times m}\to \bS^n, \quad F(X)=\frac{1}{2}XX^T.
\end{equation}
Then $\Omega_M=\sig_M\circ F$ fits the composite scheme studied in Sections \ref{sec:KConv}-\ref{sec:Main}.  It is obvious that 
$\rge F=\bS_+^n$.    The following  lemma clarifies the $K$-convexity properties of $F$.
\begin{lemma}\label{lem:F} Let $F$ be given by \eqref{eq:F}. Then   $\bS^n_+$ is the smallest closed, convex cone  in $\bS^{n}$ with respect to which $F$ is convex. 

\end{lemma}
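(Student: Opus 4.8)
The plan is to prove two things: first, that $F$ is $\bS^n_+$-convex, and second, that $\bS^n_+$ is the \emph{smallest} closed, convex cone with this property. For the first part, by Theorem \ref{thm:K convexity} it suffices to check that $\ip{V}{F}$ is convex for every $V\in -(\bS^n_+)^\circ = \bS^n_+$ (using that $\bS^n_+$ is self-dual). But $\ip{V}{F}(X) = \frac12\tr(V X X^T) = \frac12 \tr(X^T V X)$, and writing $V = W^T W$ for some $W$ (since $V\succeq 0$) this equals $\frac12\fnorm{WX}^2$, a composition of the linear map $X\mapsto WX$ with the (convex) squared Frobenius norm; hence it is convex. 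So $F$ is $\bS^n_+$-convex.

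For minimality, I would argue that any closed, convex cone $K\subset\bS^n$ with respect to which $F$ is convex must contain $\bS^n_+$. The natural route is again via scalarization: if $F$ is $K$-convex, then by Theorem \ref{thm:K convexity} a), $\ip{V}{F}$ is convex for all $V\in -K^\circ$. The plan is to show that this forces $-K^\circ \subset \bS^n_+$, and then conclude $K = -(-K^\circ)^\circ \supset -(\bS^n_+)^\circ = \bS^n_+$ by bipolarity (using that $K$ is closed and convex, cf. \cite[Theorem 14.1]{Roc 70}). So the crux is: if $\ip{V}{F} = \frac12\tr(X^T V X)$ is convex in $X$, then $V\succeq 0$. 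This quadratic form in $X$ is convex if and only if its Hessian is positive semidefinite; the Hessian (as a quadratic form acting on a direction $H\in\R^{n\times m}$) is $\tr(H^T V H)$, which is nonnegative for all $H$ precisely when $V\succeq 0$ — indeed, taking $H = h e_1^T$ for $h\in\R^n$ gives $\tr(H^T V H) = \ip{h}{Vh}\|e_1\|^2 = h^T V h$, so nonnegativity for all such $H$ already yields $V\succeq 0$ (and conversely $V\succeq 0$ clearly suffices).

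Putting it together: $-K^\circ \subset \bS^n_+$, hence $K = (-(-K^\circ))^\circ{}^\circ$ — more carefully, $-K^\circ$ is a closed convex cone contained in $\bS^n_+$, so $-(-K^\circ)^\circ \supset -(\bS^n_+)^\circ$, i.e. $K \supset \bS^n_+$ using self-duality $(\bS^n_+)^\circ = -\bS^n_+$ and the fact that $K$ equals its own bipolar. Combined with the first part, $\bS^n_+$ itself works, so it is the smallest such cone.

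The main obstacle I anticipate is purely bookkeeping: keeping the polarity/duality signs straight, since the paper uses the convention $K^\circ = \{v : \ip{v}{x}\le 0\}$ so that $(\bS^n_+)^\circ = -\bS^n_+$ and $-K^\circ$ is the relevant object in Theorem \ref{thm:K convexity}. The actual analytic content — that a quadratic form $X\mapsto\tr(X^TVX)$ is convex iff $V\succeq 0$ — is elementary once one tests against rank-one directions $H = he_j^T$. No nontrivial limiting or topological argument is needed beyond invoking bipolarity of closed convex cones, which is already cited in the excerpt.
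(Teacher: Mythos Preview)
Your proof is correct and follows essentially the same approach as the paper: both establish $\bS^n_+$-convexity of $F$ via scalarization, identify the set $\{V:\ip{V}{F}\text{ convex}\}$ with $\bS^n_+$ through the Hessian computation, and then pass through polarity/bipolarity. The only cosmetic difference is that the paper invokes \cite[Lemma~6.1]{Pen 99} to characterize the polar of the minimal cone directly, whereas you obtain the needed inclusion $-K^\circ\subset\bS^n_+$ from Theorem~\ref{thm:K convexity}~a) alone and then apply bipolarity; your version is thus slightly more self-contained and, as a small bonus, does not presuppose the existence of a smallest such cone but derives it.
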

\begin{proof} Let $K$ be the smallest closed convex cone in $\bS^n$ such that $F$ is $K$-convex. On the one hand, it is easily verified that  $F$ is $\bS^n_+$-convex,  hence $K\subset\bS^n_+$. On the other hand, by \cite[Lemma 6.1]{Pen 99},
\[
(-K)^\circ= K_F:=\set{V\in \bS^n}{\ip{V}{F}\;\text{is convex}}.
\]
Now fixing $V\in \bS^n$ for all $X\in \R^{n\times m}$ the mapping $\nabla^2 \ip{V}{F}(X): \R^{n\times m}\times \R^{n\times m}\to \R$ is given by
\[
\nabla^2 \ip{V}{F}(X)[D,H]=\ip{V}{\frac{1}{2}(HD^T+DH^T)}.
\]
Clearly, this symmetric bilinear form is positive semidefinite  if and only if $V\in \bS_+^n$, which proves that 
$
K_F=\bS^n_+.
$
Finally, by taking the polar and using the fact that $(\bS^n_+)^\circ =-\bS^n_+$ we get 
\[
K = -(-K)^\circ = -K_F^\circ = -(\bS^n_+)^\circ =\bS^n_+.
\]
\end{proof}
 
 \noindent
 We now verify the remaining conditions necessary to apply our convex-composite framework from Section \ref{sec:Main} to $g:=\sigma_M$ and $F$ given by \eqref{eq:F}.

\begin{corollary}\label{cor:F} Let $M\subset \bS^n_+$ be nonempty,  closed and convex, and let  $F$ be given by \eqref{eq:F}. Then the following hold:
\begin{itemize}
\item[a)]  $-\hzn \sig_M\supset \bS^n_+$. 
\item[b)] $\sigma_M$ is $\bS^n_+$ increasing. 
\item[c)]  $M$ is bounded if and only if
\begin{equation}\label{eq:VGFCQ}
\rge F\cap \ri(\dom \sig_M)\neq \emptyset.
\end{equation}

\end{itemize}

\end{corollary}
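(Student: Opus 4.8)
The plan is to handle the three items in the order a), b), c): part b) rests on a), while c) is self-contained. Throughout I use that $\sigma_M$ is the support function of the nonempty closed convex set $M$, so $\sigma_M\in\Gamma_0(\bS^n)$, $\sigma_M^*=\delta_M$, and $\dom\sigma_M$ is a convex cone (the barrier cone of $M$), by positive homogeneity and sublinearity of $\sigma_M$.

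For a) I would invoke \cite[Theorem~14.2]{Roc 70} --- already used in the proof of Theorem~\ref{thm:cvx cvxcomp} --- to get $\hzn\sigma_M=\big(\overline{\cone}(\dom\sigma_M^*)\big)^\circ=(\overline{\cone}\,M)^\circ$. Since $M\subset\bS^n_+$ and $\bS^n_+$ is a closed convex cone, $\overline{\cone}\,M\subset\bS^n_+$, and as polarity reverses inclusions we get $\hzn\sigma_M\supset(\bS^n_+)^\circ=-\bS^n_+$, i.e. $\bS^n_+\subset-\hzn\sigma_M$. (One may also argue directly from $\sigma_M\le\sigma_{\bS^n_+}=\delta_{-\bS^n_+}$, which gives $-\bS^n_+\subset\{Y:\sigma_M(Y)\le0\}$, a closed convex cone that, by positive homogeneity, coincides with its own recession cone $\hzn\sigma_M$.) Part b) is then immediate: by Lemma~\ref{lem:HorizonIncrease}, $\sigma_M$ is $(-\hzn\sigma_M)$-increasing, and since $\bS^n_+\subset-\hzn\sigma_M$ by a) and monotonicity with respect to a cone descends to every subcone, $\sigma_M$ is $\bS^n_+$-increasing.

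For c), the forward implication is easy: if $M$ is bounded then $\sigma_M$ is finite-valued, so $\dom\sigma_M=\ri(\dom\sigma_M)=\bS^n$, and as $\rge F=\bS^n_+\ne\emptyset$ condition \eqref{eq:VGFCQ} holds. For the converse I would first record that $M\subset\bS^n_+$ forces $-\bS^n_+\subset\dom\sigma_M$: for $Y\preceq0$ one has $\tr(XY)\le0$ for every $X\in M$, hence $\sigma_M(Y)\le0<+\infty$. Thus $\dom\sigma_M$ is a full-dimensional convex cone and $\ri(\dom\sigma_M)=\inter(\dom\sigma_M)$. Now take $Y_0\in\rge F\cap\ri(\dom\sigma_M)$, so $Y_0\succeq0$ and $Y_0+\varepsilon\bB\subset\dom\sigma_M$ for some $\varepsilon>0$. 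Given $W\in\bS^n\setminus\{0\}$, the matrix $Y_0+\tfrac{\varepsilon}{\norm{W}}W$ lies in $\dom\sigma_M$; scaling by $\norm{W}/\varepsilon$ (legitimate since $\dom\sigma_M$ is a cone) puts $\tfrac{\norm{W}}{\varepsilon}Y_0+W$ in $\dom\sigma_M$, and adding the element $-\tfrac{\norm{W}}{\varepsilon}Y_0\in-\bS^n_+\subset\dom\sigma_M$ gives $W\in\dom\sigma_M$. Hence $\dom\sigma_M=\bS^n$, $\sigma_M$ is finite-valued, and $M$ is bounded.

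The only step needing genuine care is the converse in c): parts a) and b) are bookkeeping on top of \cite[Theorem~14.2]{Roc 70} and Lemma~\ref{lem:HorizonIncrease}, whereas for c) one must exploit both the conic structure of the barrier cone $\dom\sigma_M$ and the a-priori inclusion $-\bS^n_+\subset\dom\sigma_M$ (which is where $M\subset\bS^n_+$ re-enters) in order to promote a single positive semidefinite relative-interior point to all of $\bS^n$.
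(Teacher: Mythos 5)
Your parts a) and b) coincide with the paper's argument: a) is exactly the combination of \cite[Theorem~14.2]{Roc 70} with $\overline{\cone}\,M\subset\bS^n_+$ and polarity reversing inclusions, and b) is Lemma~\ref{lem:HorizonIncrease} plus the (implicit in the paper) remark that $K'$-monotonicity passes to any subcone $K\subset K'$. For part c), however, you take a genuinely different and somewhat more elementary route, and it is correct. The paper argues through the recession cone of $M$: since $M^\infty\subset\bS^n_+$ is pointed, $\inter[(M^\infty)^\circ]\neq\emptyset$ and equals $\set{W}{\tr(WV)<0\;(V\in M^\infty\setminus\{0\})}$ by \cite[Exercise~6.22]{RoW 98}, and $\ri(\dom\sigma_M)=\inter[(M^\infty)^\circ]$ by \cite[Corollary~14.2.1]{Roc 70}; condition \eqref{eq:VGFCQ} then becomes nonemptiness of $\mathcal{F}=\set{W\in\bS^n_+}{\tr(WV)<0\;(V\in M^\infty\setminus\{0\})}$, which is all of $\bS^n_+$ when $M^\infty=\{0\}$ and empty otherwise because $\tr(VW)\geq 0$ for positive semidefinite $V,W$. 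You instead work directly with the barrier cone: the a priori inclusion $-\bS^n_+\subset\dom\sigma_M$ (from $M\subset\bS^n_+$) makes $\dom\sigma_M$ full-dimensional, so $\ri=\inter$, and a point $Y_0\succeq 0$ of $\inter(\dom\sigma_M)$ together with the cone and addition structure of $\dom\sigma_M$ yields $\dom\sigma_M=\bS^n$, hence $M$ bounded; the forward direction is immediate from finite-valuedness of $\sigma_M$. Your argument avoids the recession-cone polar identity and the strict-inequality description of the interior (and, as a small bonus, only uses $\emptyset\neq\rge F\subset\bS^n_+$ rather than the full equality $\rge F=\bS^n_+$), while the paper's computation additionally produces the explicit description $\ri(\dom\sigma_M)=\inter[(M^\infty)^\circ]$, which localizes exactly where \eqref{eq:VGFCQ} fails in the unbounded case. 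Both proofs are complete; the only steps you leave as standard facts (the barrier cone is a convex cone closed under addition, and a finite-valued support function forces boundedness of $M$) are indeed standard and unobjectionable.
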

\begin{proof}
a) We have $\overline{\cone}\;M\subset\bS^n_+$ as $M\subset\bS^n_+$, and hence, by \cite[Theorem 14.2]{Roc 70}
\[
-\hzn\sig_M = -(\overline{\cone}\;M)^\circ\supset-(\bS^n_+)^\circ =\bS^n_+.
\]
b) This follows from a) and  Corollary \ref{lem:HorizonIncrease}.
\smallskip

\noindent
c)  Since $M^\infty\subset \bS_+^n$, $M^\infty$ is pointed, hence by \cite[Exercise 6.22]{RoW 98},  we find that 
\[
\emptyset \neq \inter [(M^\infty)^\circ]=\set{W\in\bS^n}{\tr(WV)<0 \;(V\in M^\infty\setminus\{0\})}.
\]
Hence, by \cite[Corollary 14.2.1]{Roc 70}, we have 
\[
\ri(\dom \sig_M)=\ri[(M^\infty)^\circ]=\inter[(M^\infty)]^\circ
\]
 As $\rge F=\bS_+^n$, condition \eqref{eq:VGFCQ} is equivalent to  \[
\mathcal{F}:=\set{W\in\bS^n_+}{\tr(VW)<0 \; (V\in M^\infty\setminus\{0\})}\neq \emptyset.
\]
We claim that 
\[
\mathcal{F}:=\begin{cases}  \bS^n_+, & \text{if }  M \;\text{is bounded},\\
\emptyset,   & \text{else.} \end{cases}
\]
The first case is clear, since $M^\infty=\{0\}$ if (and only if) $M$ is bounded \cite[Theorem~3.5]{RoW 98}, in which case the condition restricting $\mathcal{F}$ is vacuous.

On the other hand, if $M$ is unbounded, then there exists $V\in M^\infty\setminus\{0\}\subset \bS^n_+\setminus \{0\}$. But then $\tr(VW)\geq 0$ for all $W\in \bS^n_+$, see e.g. \cite[Theorem~7.5.4]{HJ 13}, which proves the second case.

All in all, we have established the  desired equivalence.
\end{proof}

\noindent
We now combine our analysis in Section \ref{sec:KConv} and  \ref{sec:Main} with the matrix-fractional function  $\gamma$ from \eqref{eq:gamma}  to find a very  short proof of the conjugate function $\Omega_M^*$ in case $M$ is bounded (hence compact).   Here we note that  for $K:=\bS_+^n$ and 
\[
\Omega:=\set{(Y,W)\in \R^{n\times m}\times \bS^n}{\frac{1}{2}YY^T+W \preceq 0}
\]
we have  
\begin{equation}\label{eq:EpiOmega} 
\sig_{\Epi{K}{F}}(X,-V)= \sig_\Omega(X,V)\quad \left((X,V) \in \R^{n\times m}\times \bS^n\right).
\end{equation}

\noindent
Corollary \ref{cor:F} c) shows that our framework does not apply when $M$ is unbounded as the crucial condition \eqref{eq:CQ1} is violated then. 

The next result  covers  what was proven in \cite[Proposition 3.4]{JFX 17}  entirely  and one case of \cite[Proposition 5.10]{BGH 18}. 

\begin{proposition}\label{prop:VGFConj} Let $M\subset \bS^n_+$ be nonempty, convex and compact. Then  $\Omega^*_M$ is finite-valued and given by 
\begin{eqnarray*}
\Omega_{M}^*(X) & = &\frac{1}{2} \min_{V\in  M}\set{\tr(X^TV^\dagger X)}{\rge X\subset \rge V}.
\end{eqnarray*}
\end{proposition}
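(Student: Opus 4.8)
The plan is to read $\Omega_M=\sigma_M\circ F$ with $F(X)=\half XX^T$ as an instance of the composite scheme of Section~\ref{sec:Main} and to apply Corollary~\ref{cor:Conj1} with $K=\bS^n_+$. First I would verify the hypotheses: taking $\bE_1=\R^{n\times m}$, $\bE_2=\bS^n$ and $g=\sigma_M$, Lemma~\ref{lem:F} gives that $F$ is $\bS^n_+$-convex, Corollary~\ref{cor:F}(b) that $\sigma_M$ is $\bS^n_+$-increasing, and Corollary~\ref{cor:F}(c)---this is where compactness (hence boundedness) of $M$ enters---that the Slater-type condition \eqref{eq:CQ1simple}, namely $\rge F\cap\ri(\dom\sigma_M)\neq\emptyset$, holds (equivalently, $\sigma_M$ is finite-valued). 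Hence Corollary~\ref{cor:Conj1} is applicable.

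Next, since $(\bS^n_+)^\circ=-\bS^n_+$ one has $-K^\circ=\bS^n_+$, and since $M$ is closed and convex, $\sigma_M^*=\delta_M^{**}=\delta_M$ with $\dom\sigma_M^*=M\subset\bS^n_+$; Corollary~\ref{cor:Conj1} thus reduces the conjugate to $\Omega_M^*(X)=\min_{V\in M}\ip{V}{F}^*(X)$, a genuine minimum by the attainment statement of that corollary. It then remains to compute, for fixed $V\in\bS^n_+$, the scalarized conjugate
\[
\ip{V}{F}^*(X)=\sup_{Y\in\R^{n\times m}}\bigl\{\tr(X^TY)-\half\tr(VYY^T)\bigr\}=\sigma_{\cF}(X,V),\qquad \cF=\set{(Y,-\half YY^T)}{Y\in\R^{n\times m}}.
\]
By the support-function representation of the matrix-fractional function $\gamma$ recalled in Section~\ref{sec:Kiefer} (from \cite[Section~5.2]{BuH15}), $\sigma_\cF(X,V)=\gamma(X,V)$ equals $\half\tr(X^TV^\dagger X)$ when $\rge X\subset\rge V$ and $+\infty$ otherwise; alternatively one obtains this directly by splitting $\R^n=\rge V\oplus\ker V$ and maximizing column by column, the maximizer being $Y=V^\dagger X$ when the range inclusion holds. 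Substituting into the displayed reduction yields $\Omega_M^*(X)=\half\min_{V\in M}\set{\tr(X^TV^\dagger X)}{\rge X\subset\rge V}$.

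For the finite-valuedness claim I would note that Theorem~\ref{th:ConjMain}(c) identifies $\dom\Omega_M^*=\set{X}{\rge X\subset\rge V\ \text{for some}\ V\in M}$ and then check that this set is all of $\R^{n\times m}$; this last verification is the one point where I would take extra care, since for a general compact convex $M\subset\bS^n_+$ it forces the column spaces of the matrices in $M$ to exhaust $\R^n$. Apart from this, the only genuinely technical ingredient is the computation of $\ip{V}{F}^*$ when $V$ is singular---the appearance of the pseudoinverse and of the range constraint $\rge X\subset\rge V$---but this is exactly what the matrix-fractional material of Section~\ref{sec:Kiefer} supplies, so once Corollary~\ref{cor:Conj1} has reduced matters to a single scalarized function the remaining bookkeeping ($-K^\circ=\bS^n_+$, $\sigma_M^*=\delta_M$, attainment) is routine.
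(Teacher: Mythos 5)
Your proof is correct and essentially the paper's own argument: Corollary~\ref{cor:Conj1} with $K=\bS^n_+$ (hypotheses checked via Lemma~\ref{lem:F} and Corollary~\ref{cor:F}), reduction to $\Omega_M^*(X)=\min_{V\in M}\ip{V}{F}^*(X)$, and identification of $\ip{V}{F}^*(X)$ with the matrix-fractional function $\gamma(X,V)$ --- the paper phrases this last step as $\ip{V}{F}^*(X)=\sig_{\Epi{K}{F}}(X,-V)=\sig_\Omega(X,V)=\gamma(X,V)$ via Lemma~\ref{lem:Support} and \cite[Theorem~2]{BGH17}, while you use $\ip{V}{F}^*(X)=\sig_{\cF}(X,V)=\gamma(X,V)$ from \cite[Section~5.2]{BuH15} (or the direct maximization with $Y=V^\dagger X$), which is the same computation. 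Your caution about finite-valuedness is warranted: the paper's closing claim that compactness of $M$ alone yields finiteness is too quick (for $M=\{0\}$ one gets $\Omega_M^*=\delta_{\{0\}}$), and finiteness of $\Omega_M^*$ on all of $\R^{n\times m}$ in fact requires that some $V\in M$ have $\rge V=\R^n$, i.e.\ $M\cap\bS^n_{++}\neq\emptyset$, although the displayed formula itself holds regardless.
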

\begin{proof} $K:=\bS_+^n$. Recall that $(\clcone{M})^\circ=\hzn \sig_M$, see \cite[Theorem 14.2]{Roc 70}. Then  $F$ given by \eqref{eq:F} is $K$-convex  by    Lemma \ref{lem:F},  and $g$ is  $K$-increasing by Corollary \ref{cor:F} b).   By Corollary \ref{cor:F} c) we find that  condition \eqref{eq:CQ1simple} for $\sig_M\circ F$ and $K$ is satisfied  if (and only if) $M$ is bounded. Hence we compute that
\begin{eqnarray*}
\Omega_{M}^*(X) & = & \min_{V\in -K^\circ} \delta_M(V)+\ip{V}{F}^*(X)\\
& = &  \min_{V\in  M} \sig_{\Epi{K}{F}}(X,-V)\\
& = & \min_{V\in M} \sigma_{\Omega}(X,V)\\
& = & \min_{V\in M} \gamma(X,V)\\
& = & \min_{V\in M}\set{\frac{1}{2}\tr(X^TV^\dagger X)}{\rge X\subset \rge V}
\end{eqnarray*}
Here the first identity is due to Corollary \ref{cor:Conj1}, the second is  due to Lemma \ref{lem:Support}, the third uses \eqref{eq:EpiOmega}, the fourth follows from  \cite[Theorem 2]{BGH17} and the last identity is simply the definition of $\gamma$ in \eqref{eq:gamma}.

As $M$ is compact this proves also the finite-valuedness and thus concludes the proof.
\end{proof}

\noindent
A formula for the subdifferential of the VGF $\Omega_{M}$ is easily established as well.

\begin{corollary}\label{cor:VGFSub}  Let $M\subset \bS^n_+$ be nonempty, convex and compact. Then 
\[
\p \Omega_M(X)=\set{VX}{V\in \argmax_{M}\ip{XX^T}{\cdot}}.
\]
\end{corollary}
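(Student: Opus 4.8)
The plan is to read $\Omega_M=\sigma_M\circ F$ with $F$ as in \eqref{eq:F} and to invoke the subdifferential formula of Corollary \ref{cor:Conj1} with $K:=\bS^n_+$ and $g:=\sigma_M$. All the hypotheses have in fact already been verified in this subsection: $F$ is $\bS^n_+$-convex by Lemma \ref{lem:F}, $\sigma_M$ is $\bS^n_+$-increasing by Corollary \ref{cor:F}(b), and, since $M$ is compact (hence bounded), the point-based qualification condition \eqref{eq:CQ1simple} holds by Corollary \ref{cor:F}(c); moreover $F$ is finite-valued, so $\dom F=\R^{n\times m}$ and $\Omega_M=\sigma_M\circ F$ is finite everywhere. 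Corollary \ref{cor:Conj1} then yields, for every $X\in\R^{n\times m}$,
\[
\p\Omega_M(X)=\bigcup_{V\in\p\sigma_M(F(X))}\p\ip{V}{F}(X).
\]

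Next I would identify the two subdifferentials on the right. For the outer one, since $M$ is nonempty, convex and compact, the classical description of the subdifferential of a support function (see e.g. \cite{Roc 70}) gives $\p\sigma_M(Y)=\argmax_{V\in M}\ip{V}{Y}$ for all $Y\in\bS^n$; evaluating at $Y=F(X)=\tfrac12 XX^T$ and using that multiplying the linear objective by $\tfrac12>0$ does not change the set of maximizers, we get $\p\sigma_M(F(X))=\argmax_M\ip{XX^T}{\cdot}$. For the inner one, note that $\ip{V}{F}(X)=\tr\big(V\cdot\tfrac12 XX^T\big)=\tfrac12\tr(X^TVX)$ is a finite-valued quadratic, hence continuously differentiable, and a short computation using $V=V^T$ gives $\nabla\ip{V}{F}(X)=VX$, so $\p\ip{V}{F}(X)=\{VX\}$ by Remark \ref{rem:DiffScalar}.

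Substituting these two expressions into the display above gives
\[
\p\Omega_M(X)=\bigcup_{V\in\argmax_M\ip{XX^T}{\cdot}}\{VX\}=\set{VX}{V\in\argmax_M\ip{XX^T}{\cdot}},
\]
which is the assertion. There is no genuine obstacle in this argument; the only mildly delicate points — that the qualification condition \eqref{eq:CQ1simple} is equivalent to boundedness of $M$, and the gradient identity $\nabla\tfrac12\tr(X^TVX)=VX$ — are taken care of respectively by Corollary \ref{cor:F}(c) and an elementary differentiation.
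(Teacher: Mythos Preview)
Your proof is correct and follows essentially the same route as the paper's: apply the composite subdifferential formula of Corollary~\ref{cor:Conj1} (applicability verified exactly as in Proposition~\ref{prop:VGFConj}), then identify $\p\sigma_M(F(X))=\argmax_M\ip{XX^T}{\cdot}$ and $\p\ip{V}{F}(X)=\{VX\}$ via $F'(X)^*V=VX$ and Remark~\ref{rem:DiffScalar}. Your write-up is simply a more detailed version of the paper's one-line argument.
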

\begin{proof} First observe  that $F'(X)^*V=VX$ for all $X\in \R^{n\times m}$ and that $\p \sigma_M(U)=\argmax_{M}\ip{U}{\cdot}$ for all $U\in \bS^n$. Therefore,  Corollary \ref{cor:Conj1} (which is applicable for the same reasons as in Proposition \ref{prop:VGFConj}) and Remark \ref{rem:DiffScalar} 
\end{proof}

\subsection{Spectral function of symmetric matrices}\label{sec:Lewis}

Consider the  function
\begin{equation}\label{eq:Lambda}
\lambda\colon\bS^n\to\R^n,\; \lambda(X):=(\lambda_1(X), \ldots, \lambda_n(X)),\quad (X\in\bS^n)
\end{equation}
where $\lambda_1(X)\geq\ldots\geq\lambda_n(X)$ are the  eigenvalues of $X$.  In order to apply our framework from above, we equip  $\bS^n$ with the inner product 
\[
\ip{\cdot}{\cdot}:\bS^n\times \bS^n\to \R, \; \ip{X}{Y}=\tr(XY)
\]
and $\R^n$ with the  standard inner product.
The following results  are, even without any convexity assumptions,  originally due to Lewis \cite{Lew 95, Lew 96} and were recently confirmed by  a simplified proof due to Drusvyatskiy and Paquette   \cite{DrP 18}, however under an additional lower semicontinuity assumption. Our technique of proof, based on the convex-composite framework with $g\in \Gamma(\R^n)$ and $F:=\lambda$, differs substantially from the latter references, but shares some similarity with \cite[Section 9]{Pen 99} and we make use of some of the auxiliary results established there. Note that the latter reference does not establish the conjugate formula below.

\begin{proposition}\label{prop:Spec}
\label{p:spectral} Let  $g\in\Gamma(\R^n)$ be  permutation invariant (i.e., its value does not change by reordering of the  argument vector) and let $\lambda:\bS^n\to\R^n$ be given by \eqref{eq:Lambda}. Then  $g\circ \lambda$ is convex  and  
\[
(g\circ\lambda)^*=g^*\circ\lambda.
\]
%
\end{proposition}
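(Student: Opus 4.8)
The plan is to realize $g \circ \lambda$ as a $K$-convex composite in the sense of Section~\ref{sec:Main} and then invoke Corollary~\ref{cor:Conj2}. The natural choice is $K := -\hzn g$, since Lemma~\ref{lem:HorizonIncrease} then hands us the $K$-increasing property of $g$ for free. The two things I must verify are: (i) $\lambda\colon\bS^n\to\R^n$ is convex with respect to $-\hzn g$, and (ii) the Slater-type qualification $\lambda(\ri(\dom(g\circ\lambda)))\cap\ri(\dom g)\neq\emptyset$ holds, after which Corollary~\ref{cor:Conj2} gives the conjugate formula $(g\circ\lambda)^*(Y)=\min_{v}\{g^*(v)+\ip{v}{\lambda}^*(Y)\}$, and the remaining work is to identify this minimum with $(g^*\circ\lambda)(Y)$.

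First I would establish the $(-\hzn g)$-convexity of $\lambda$. Because $g$ is permutation invariant and convex, $\hzn g$ is a permutation-invariant closed convex cone, so $-K^\circ = \overline{\cone}(\dom g^*)$ is also permutation invariant; any permutation-invariant convex cone in $\R^n$ containing a vector $v$ contains its permutations and hence their convex hull. Using Theorem~\ref{thm:K convexity}, it suffices to show $\ip{v}{\lambda}$ is convex for every $v\in -K^\circ$. For $v$ with weakly decreasing entries this is classical (it is a nonnegative combination of the convex functions $X\mapsto\lambda_1(X)+\cdots+\lambda_k(X)$ via Abel summation / Ky Fan's maximum principle); the permutation invariance of $-K^\circ$ lets me reduce a general $v$ to its decreasing rearrangement, which gives the \emph{same} function $\ip{v}{\lambda}$ since $\lambda(X)$ is itself sorted. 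This is the step I expect to lean on auxiliary results from \cite{Pen 99} as the authors advertise. The convexity of $g\circ\lambda$ then follows from Proposition~\ref{prop:composite1} (or directly Corollary~\ref{cor:Conj2}).

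For the qualification condition: $\dom(g\circ\lambda)=\lambda^{-1}(\dom g)$, and because $g$ is permutation invariant, $\dom g$ is a permutation-invariant convex set, so $\ri(\dom g)$ meets the sorted cone $\{v : v_1\geq\cdots\geq v_n\}$; picking such a point $\bar v\in\ri(\dom g)$ and any $X$ with $\lambda(X)=\bar v$ (e.g.\ $X=\diag(\bar v)$) shows $\lambda(X)\in\ri(\dom g)$, and a short argument—again using permutation invariance together with the fact that $\lambda$ maps $\diag(\bar v)$ into the relative interior of the relevant sorted face—places $X$ in $\ri(\dom(g\circ\lambda))$. Finally, to evaluate the minimum in Corollary~\ref{cor:Conj2}, I compute $\ip{v}{\lambda}^*(Y)$ for $v$ with sorted entries: by von Neumann's trace inequality $\ip{v}{\lambda}^*(Y)=0$ if $Y$ and $\mathrm{Diag}(v)$ can be simultaneously diagonalized with $\lambda(Y)=v$, i.e.\ $\ip{v}{\lambda}^*(Y)$ forces $v=\lambda(Y)$ and is $+\infty$ otherwise on the sorted cone; plugging back gives $(g\circ\lambda)^*(Y)=g^*(\lambda(Y))$.

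The main obstacle is the $K$-convexity step, specifically getting the $(-\hzn g)$-convexity of $\lambda$ cleanly—this requires knowing that $-\hzn g = \overline{\cone}(\dom g^*)$ is permutation invariant \emph{and} that every $\ip{v}{\lambda}$ with $v\in-K^\circ$ is convex, which ultimately rests on Ky Fan's sum-of-top-eigenvalues inequality and on the subtle point that permutation invariance of the cone means we never need $v$ outside the sorted region. The qualification-condition verification is routine once permutation invariance of $\dom g$ is in hand, and the identification of the conjugate is von Neumann's inequality, so I would budget most of the writing effort on the first step.
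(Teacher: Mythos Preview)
Your plan has a genuine gap at step~(i): $\lambda$ is \emph{not} $(-\hzn g)$-convex for general permutation-invariant $g\in\Gamma(\R^n)$. Take $g=\tfrac{1}{2}\|\cdot\|^2$; then $\hzn g=\{0\}$, so $K=-\hzn g=\{0\}$ and $-K^\circ=\overline{\cone}(\dom g^*)=\R^n$. By Theorem~\ref{thm:K convexity} you would need $\ip{v}{\lambda}$ convex for \emph{every} $v\in\R^n$, but $v=e_n$ gives $\ip{v}{\lambda}(X)=\lambda_n(X)$, which is concave. Your attempted reduction---permutation invariance of $-K^\circ$ lets you pass to the decreasing rearrangement of $v$, which ``gives the same function $\ip{v}{\lambda}$''---is simply false: since $\lambda(X)$ is already sorted, permuting $v$ \emph{changes} the function (for $n=2$, $v=(0,1)$ yields the concave $\lambda_2$ while $v^{\downarrow}=(1,0)$ yields the convex $\lambda_1$). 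Hence Corollary~\ref{cor:Conj2} is unavailable here.

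The paper instead fixes a cone independent of $g$, the Schur-majorization cone $K=\{v:\sum_{i\le k}v_i\ge 0\ (k<n),\ \sum_i v_i=0\}$, whose negative polar is exactly the set of weakly decreasing vectors; for such $v$ the functions $\ip{v}{\lambda}$ \emph{are} convex, so $\lambda$ is $K$-convex---this is \cite[Corollary~9.3]{Pen 99}, which also gives $\ip{v}{\lambda}^*=\delta_{\Omega_v}$ with $\Omega_v=\{X:\lambda(X)\le_K v\}$ (so your claim in step~(iii) that $\ip{v}{\lambda}^*(Y)$ forces $v=\lambda(Y)$ is also off: it only forces $v$ to majorize $\lambda(Y)$). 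With this $K$, however, $g$ is in general \emph{not} $K$-increasing; only the weaker condition~\eqref{eq:FKMonotone} holds (for both $g$ and $g^*$), by \cite[Lemma~9.5]{Pen 99}. The paper therefore invokes Theorem~\ref{th:ConjMain} directly, verifying~\eqref{eq:CQ1} via an explicit construction when $\dom g$ is not contained in the diagonal line $\{t(1,\dots,1):t\in\R\}$ (that degenerate case is handled separately by hand), and closes using $\ip{v}{\lambda}^*=\delta_{\Omega_v}$ together with~\eqref{eq:FKMonotone} for $g^*$.
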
 
\begin{proof}

Consider the closed, convex cone
\begin{equation}\label{eq:KSpec}
K=\set{v\in\R^n}{\sum_{i=1}^kv_i\geq 0, k= 1, \ldots, n-1, \sum_{i=1}^nv_i=0},
\end{equation}
whose polar cone is 
\begin{equation}\label{eq:PolarKSpec}
K^\circ = \set{v\in\R^n}{v_1\leq \ldots\leq v_n}. 
\end{equation}
By \cite[Theorem~6.5]{Roc 70} we have 
\begin{equation}\label{eq:RintKSpec}
\ri K=\set{v\in\R^n}{\sum_{i=1}^kv_i> 0, k= 1, \ldots, n-1, \sum_{i=1}^nv_i=0}.
\end{equation}
We first note that $\dom\lambda =\bS^n$ and hence, $\lambda(\ri(\dom\lambda))=\rge\lambda=-K^\circ$. 
Next, by \cite[Corollary~9.3]{Pen 99}, $\lambda$ is $K$-convex and for any $v\in -K^\circ$, $\ip{v}{\lambda}=\sig_{\Omega_v}$ where $\Omega_v=\set{X\in\bS^n}{v-\lambda(X)\in K}$ is a closed, convex set in $\bS^n$.  In particular, $\lambda$ is $K$-convex,  and  $\ip{v}{\lambda}^*=\delta_{\Omega_v}$.  Moreover, observe that since $g$ is permutation invariant,  so is $g^*$, and hence, by \cite[Lemma~9.5]{Pen 99}, both $g$ and $g^*$ satisfy \eqref{eq:FKMonotone} with  $F=\lambda$ and $K$ from \eqref{eq:KSpec},  which shows in particular that $g\circ \lambda$ is convex, see  Proposition \ref{prop:composite1}.
Now set $G:=\set{v\in\R^n}{v_1=\ldots = v_n}$ and  consider two cases:
\smallskip

\noindent
\underline{$\dom g=G$:} For $X\in\bS^n$ observe that 
\[
\begin{aligned}
(g\circ\lambda)^*(X)
&=\sup_{\lambda(Y)\in\dom g}\left\{\ip{X}{Y}-g(\lambda(Y))\right\}\\
&=\sup_{\alpha\in\R}\left\{\ip{X}{\alpha I_n}-g(\alpha, \ldots,\alpha)\right\}\\
&=\sup_{\alpha\in\R}\left\{\ip{(\alpha, \ldots,\alpha)}{(\lambda_1(X), \ldots, \lambda_n(X))}-g(\alpha, \ldots,\alpha)\right\}\\
&=\sup_{y\in \dom g}\left\{\ip{y}{\lambda(X)}-g(y)\right\}\\
&=g^*(\lambda(X)).
\end{aligned}
\]
Here the second identity uses that $\dom g=G$ and that  $\lambda(Y)\in G$ if and only if $Y$ is a multiple of $I_n$. The third is due to the fact that $\ip{X}{\alpha I_n}=\alpha \tr(X)=\alpha\sum_{i=1}^m\lambda_i(X)$. The fourth one uses again that $\dom g=G$. This proves the conjugate formula in this case.
\smallskip

\noindent
\underline{$\dom g\neq G$:}  Here we want to apply Theorem \ref{th:ConjMain}  to $g$,  $F=\lambda$ and $K$ given in \eqref{eq:KSpec}. We already established above that  \eqref{eq:FKMonotone} holds.  We now verify the qualification condition~\eqref{eq:CQ1} which in the current setting  reads
\begin{equation}
\label{eq:cq3e}
\emptyset\neq \ri(\dom g -K)\cap(-K^\circ)=(\ri(\dom g)-\ri K)\cap(-K^\circ).
\end{equation}
Fix $v\in\ri(\dom g)$. Denote by $S_n$ the set of all permutations $\sigma\colon\{1, \ldots, n\}\to\{1, \ldots, n\}$ and the action of such a permutation on $v$ by $\sigma(v)=(v_{\sigma(1)},\ldots, v_{\sigma(n)})$. Since $g$ is unchanged under the reordering of its argument vector, $\sigma(v)\in\dom g$ for all $\sigma\in S_n$ and hence by \cite[Theorem~6.1]{Roc 70}, $\bar v=\frac{1}{n!}\sum_{\sigma\in S_n}\sigma(v)\in\ri(\dom g)$. By assumption, $\dom g\neq G$, hence there exists  $\hat v\in\dom g$ be such that at least two its components are distinct. Let $\sigma_*\in S_n$ be such that $\sigma_*(\hat v)$ is a nondecreasing reordering of $\hat v$. By replacing $v$ by $\frac{1}{2}\bar v+\frac{1}{2}\sigma_*(\hat v)\in\ri(\dom g)$ (by \cite[Theorem~6.1]{Roc 70}), we can assume that $v_1\geq\ldots\geq v_k>v_{k+1}\geq \ldots\geq v_n$ for some $1\leq k<n$.  
Now,  pick $0<\alpha <\big(1- \frac{k}{n}\big)(v_k-v_{k+1})$ and define $b\in \R^n$ component-wise by
\[
b_i=
\begin{cases}
\alpha,&\text{ if } 1\leq i\leq k,\\
-\frac{k\alpha}{n-k},&\text{ if } k<i\leq n
\end{cases}\quad \quad (i=1,\dots,n).
\]
Then we have  
\[
\sum_{i=1}^\ell b_i=k\cdot\alpha-\frac{\ell-k}{n-k}k\cdot\alpha\quad(\ell =1,\dots,n),
\]
hence $b\in \ri K$, see \eqref{eq:RintKSpec}. Moreover, we have 
\[
(v_i-b_i)-(v_{i+1}-b_{i+1})=\left\{\begin{array}{rcl}
  v_k-v_{k+1}-\frac{\alpha\cdot n}{n-k}, & \text{if} & i=k, \\
 v_i-v_{i+1}, &  \text{else}
\end{array}\right\}\geq 0\quad (i=1,\dots,n),
\]
and therefore $v-b\in -K^\circ$, cf. \eqref{eq:PolarKSpec}.
 All in all,  Theorem \ref{th:ConjMain} is applicable and yields
\[
\begin{aligned}
(g\circ \lambda)^*(X)
&=\min_{v\in -K^\circ}\; g^*(v)+\ip{v}{\lambda}^*(X)\\
&=\min_{v\in -K^\circ}\; g^*(v)+\delta_{\Omega_v}(X)\\
&=\min_{v\in -K^\circ:\; X\in\Omega_v}\; g^*(v)\\
&=g^*(\lambda(X)),
\end{aligned}
\]
where the last identity follows from the fact that $-K^\circ\ni\lambda(X)\leq_Kv$ and $g^*$ satisfies \eqref{eq:FKMonotone} as seen above.
%
\end{proof}

\begin{remark} Using  the same notation as in the proof of the Proposition~\ref{p:spectral}, we have the following comments:
\begin{itemize}
\item[a)]  If $b=(b_1, \ldots, b_n)\in\ri K$ then $b_n<0< b_1$, and hence, we can deduce that the qualification condition~\eqref{eq:CQ1} is equivalent to the fact that $\dom g\neq G$. 
\item[b)] Analogous   \cite{Lew 95, Lew 96},  the formula for the subdifferential of $g\circ\lambda$ can be obtained using the conjugacy result in Proposition~\ref{p:spectral} and the von Neumann's inequality \cite{neu 35} 
\[
\ip{X}{Y}\leq \ip{\lambda(X)}{\lambda(Y)}\quad (X,Y\in \bS^n).
\]
To this end, observe that
\[
\begin{aligned}
\partial (g\circ\lambda)(X)
&=\set{Y\in\bS^n}{(g\circ\lambda)(X)+(g\circ\lambda)^*(Y)=\ip{X}{Y}}\\
&=\set{Y\in\bS^n}{g(\lambda(X))+g^*(\lambda(Y))=\ip{\lambda(X)}{\lambda(Y)}}\\
&=\set{Y\in\bS^n}{\lambda(Y)\in\partial g(\lambda(X))}\\
&=\set{U^*\mathrm{diag}(v)U}{v\in\partial g(\lambda(X)),\; U^*U=I_n},
\end{aligned}
\]
where the second equality comes from the Fenchel-Young inequality \eqref{eq:FYI} applied to $g$.
\end{itemize}
\end{remark}

\noindent
We close this section with an example that shows that Proposition \ref{prop:Spec} cannot be derived from the results by Combari et al. \cite{CLT 94} as their assumptions are not met.

\begin{example}\label{ex:Spec}
In Proposition~\ref{p:spectral}, set $g=\delta_{\{0\}}\in\Gamma(\R^n)$ which is permutation invariant and satisfies \eqref{eq:FKMonotone} but  is not increasing w.r.t. $K$ given in \eqref{eq:KSpec}. Moreover,  
$
\R_+(\dom g-\rge\lambda)=K^\circ
$
is not a  subspace.

\hfill $\diamond$
\end{example}

\subsection{A Farkas-type result}
\label{sect:Fakas} 
In this section, we extend the  Farkas-type result shown in \cite[Theorem~4.1]{Bot et al. 06}. The latter reference uses a Lagrangian duality framework  where our proof  is based on  Corollary~\ref{cor:AddConj} and Theorem \ref{th:ConjMain}. 
For this purpose, let $X\subset\bE_1$ be a nonempty, convex set, let $K, L\subset\bE_2$  be nonempty, closed, convex cones,  let $F\colon\bE_1\to\bE_2^\bullet$ be $K$-convex, let  $G\colon\bE_1\to\bE_2^\bullet$ be $L$-convex, $f\in\Gamma(\bE_1)$,  and let $g\in\Gamma(\bE_2)$ satisfy \eqref{eq:FKMonotone}. Consider the following qualification condition:
\begin{equation}
\label{eq:CQfarkas}
\exists\; \bar x\in\ri X\cap\ri(\dom f)\cap\ri(\dom F)\cap\ri(\dom G):\quad
\begin{cases}
F(\bar x)\in\ri(\dom g-K),\\
G(\bar x)\in \ri(-L).
\end{cases}
\end{equation}
We point out that a particular version of the following result with $f=0$, $F$ and $G$ are finite-valued, $K=L=\R^n_+$, and $g$ is assumed to be $K$-increasing instead of satisfying \eqref{eq:FKMonotone}, is proved in \cite{Bot et al. 06}.

\begin{theorem}
\label{th:farkas}
Under \eqref{eq:CQfarkas}, the following are equivalent:
\begin{itemize}
\item[a)] $x\in X$, $G(x)\in -L\;\Longrightarrow\; f(x)+g(F(x))\geq 0$.
\item[b)] There exist $\bar y, \bar z, \bar s\in\bE_1$, $\bar u\in\ -K^\circ$, $\bar v\in -L^\circ$ such that
\[
g^*(\bar u)+\ip{\bar u}{F}^*(\bar y)+\ip{\bar v}{G}^*(\bar z)+\sigma_X(\bar s) +f^*(-\bar y-\bar z-\bar s)\leq 0.
\]
\end{itemize}
\end{theorem}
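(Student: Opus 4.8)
The plan is to reduce the Farkas-type equivalence to a statement about the value of an infimum being $\geq 0$, and then to dualize that infimum using the conjugate calculus already developed. Concretely, statement a) says precisely that
\[
\inf_{x\in\bE_1}\; \big[f(x)+\delta_X(x)+g(F(x))+(\delta_{-L}\circ G)(x)\big]\;\geq\;0,
\]
since the constraints $x\in X$ and $G(x)\in-L$ are exactly encoded by the indicator terms $\delta_X$ and $\delta_{-L}\circ G$. Writing $h:=f+\delta_X$ (which lies in $\Gamma(\bE_1)$, with $\ri(\dom h)=\ri X\cap\ri(\dom f)$ by \cite[Theorem~6.5]{Roc 70}), the inequality above is equivalent to $-(h+g\circ F+\delta_{-L}\circ G)^*(0)\geq 0$, i.e. $(h+g\circ F+\delta_{-L}\circ G)^*(0)\leq 0$. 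So statement b) should emerge as the explicit evaluation of that conjugate at the origin.

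The key step is to compute $(h+g\circ F+\delta_{-L}\circ G)^*(0)$ by a nested application of the additive-composite conjugate formula. First I would group the function as $h+(g\circ F)+(\delta_{-L}\circ G)$ and apply Corollary~\ref{cor:AddConj}, or rather an iterated version of it, peeling off one composite term at a time: the term $g\circ F$ is handled by Theorem~\ref{th:ConjMain} (using that $F$ is $K$-convex and $g$ satisfies \eqref{eq:FKMonotone}), and the term $\delta_{-L}\circ G$ is handled analogously, noting as in Section~\ref{sec:ConicProg} that $\delta_{-L}$ is $L$-increasing and hence automatically satisfies \eqref{eq:FKMonotone} with $L$, and that $(\delta_{-L})^*=\delta_{L^\circ}$ so $g^*$-type terms for this factor collapse to the constraint $\bar v\in -L^\circ$ (after the sign bookkeeping, $v\in L^\circ$ versus $-L^\circ$, matching how $-K^\circ$ appears in Corollary~\ref{cor:AddConj}). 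Carrying out the infimal-convolution bookkeeping, the conjugate at $0$ becomes
\[
\min\;\Big\{\, g^*(\bar u)+\ip{\bar u}{F}^*(\bar y)+\sigma_X(\bar s)+\ip{\bar v}{G}^*(\bar z)+f^*(-\bar y-\bar z-\bar s)\,\Big\},
\]
the minimum taken over $\bar u\in -K^\circ$, $\bar v\in -L^\circ$, and $\bar y,\bar z,\bar s\in\bE_1$, where I have used $\delta_X^*=\sigma_X$ and split the $f+\delta_X$ conjugate as $f^*\Box\sigma_X$. Then $(h+g\circ F+\delta_{-L}\circ G)^*(0)\leq 0$ holds if and only if there exist such $\bar u,\bar v,\bar y,\bar z,\bar s$ making the bracketed quantity $\leq 0$, which is exactly statement b).

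The main obstacle is verifying that the qualification condition \eqref{eq:CQfarkas} is strong enough to license every application of Theorem~\ref{th:ConjMain}/Corollary~\ref{cor:AddConj} in the iteration, with the relative interiors correctly propagated. One must check that \eqref{eq:CQfarkas} implies, simultaneously: $F(\ri(\dom f)\cap\ri(\dom F)\cap\ri X\cap\ri(\dom G))\cap\ri(\dom g-K)\neq\emptyset$ for the $g\circ F$ piece; $G(\ri(\dom f)\cap\ri(\dom F)\cap\ri X\cap\ri(\dom G))\cap\ri(-L)\neq\emptyset$ for the $\delta_{-L}\circ G$ piece; and $\ri X\cap\ri(\dom f)\cap\ri(\dom F)\cap\ri(\dom G)\neq\emptyset$ for the sum rules — all of which are read off directly from the single point $\bar x$ furnished by \eqref{eq:CQfarkas}, together with the fact (from Lemma~\ref{lem:RiKEpi} and the equivalences in the proof of Theorem~\ref{th:ConjMain}b)) that these conditions are precisely the relative-interior conditions needed. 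I would organize the argument so that this verification is done once, after which the conjugate formula drops out mechanically and the equivalence a) $\iff$ b) is immediate from the identity $\inf\Phi=-\Phi^*(0)$ for $\Phi\in\Gamma$. A minor point to handle carefully is properness/feasibility: if the infimum in a) is $+\infty$ (vacuous constraints), both sides still behave correctly, and the minimum in b) is attained (possibly with value $-\infty$ excluded by the qualification condition making the primal value finite or $+\infty$), so no degenerate case is lost.
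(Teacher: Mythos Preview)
Your proposal is correct and follows the same overall strategy as the paper: rewrite a) as $\inf_{x}\Phi(x)\ge 0$ with $\Phi=f+\delta_X+g\circ F+\delta_{-L}\circ G$, use $\inf\Phi=-\Phi^*(0)$, and compute $\Phi^*(0)$ via the composite conjugate calculus to obtain b). The one organizational difference is that you plan to iterate Corollary~\ref{cor:AddConj}, peeling off $g\circ F$ and $\delta_{-L}\circ G$ one at a time; the paper instead bundles the two composites into a single product-space composite by setting $\tilde K:=K\times L$, $\tilde F(x):=(F(x),G(x))$, $\tilde g(u,v):=g(u)+\delta_{-L}(v)$, and $\tilde f:=f+\delta_X$, so that $\Phi=\tilde f+\tilde g\circ\tilde F$ and a single application of Corollary~\ref{cor:AddConj} suffices, after which Theorem~\ref{th:AttBre} is used to split $(f+\delta_X)^*$ and $(\ip{u}{F}+\ip{v}{G})^*$. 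The bundling trick sidesteps precisely the obstacle you flagged: in the iterated approach you must identify $\ri(\dom(h+\delta_{-L}\circ G))$, which requires an extra relative-interior argument (via Lemma~\ref{lem:RiKEpi} and a slicing of $\Epi{L}{G}$), whereas with the product-space reduction \eqref{eq:CQfarkas} is directly seen to be \eqref{eq:CQ2} for $\tilde f,\tilde g,\tilde F,\tilde K$. Both routes work, but the paper's is tidier on the qualification-condition bookkeeping.
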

\begin{proof} Consider $\bE=\bE_2\times\bE_2$ equipped with the inner product $\ip{(x_1,y_1)}{(x_2,y_2)}:=\ip{x_1}{x_2}+\ip{y_1}{y_2}$ and define $\tilde K:=K\times L$ and $\tilde f:=f+\delta_X$. Let  $\tilde F\colon x\in \bE_1\mapsto (F(x), G(x))\in \bE$ with $\dom\tilde F=\dom F\cap\dom G$, and $\tilde g\colon(u,v) \in \bE\mapsto g(u)+\delta_{-L}(v)$ with $\dom\tilde g=\dom g\times (-L)$ and $\tilde{g}^*\colon (u,v)\mapsto g^*(u)+\delta_{-L^\circ}(v)$. We then see that $\tilde F$ is $\tilde K$-convex and since $g\in \Gamma(\bE_2)$ satisfies \eqref{eq:FKMonotone} and $\delta_{-L}$ is $L$-increasing, $\tilde g\in \Gamma(\bE)$ satisfies \eqref{eq:FKMonotone}. In addition, we observe that  \eqref{eq:CQfarkas} is equivalent to \eqref{eq:CQ2}.

\noindent 
a) $\Rightarrow$ b): We note that
\begin{equation}
\label{eq:farkas3b}
0\leq \inf\limits_{\overset{x\in X}{G(x)\in -L}}\;f(x)+g(F(x))=\inf_{\bE_1}\;f+\delta_X+g\circ F+\delta_{-L}\circ G=\inf_{\bE_1}\;\tilde f+\tilde g\circ\tilde F=-(\tilde f+\tilde g\circ\tilde F)^*(0),
\end{equation}
and we conclude that $(\tilde f+\tilde g\circ\tilde F)^*(0)\leq 0$. We now apply Corollary~\ref{cor:AddConj} a) with $\tilde K$, $\tilde f$, $\tilde g$ and $\tilde F$ to get
\begin{equation}
\label{eq:farkas3}
\begin{aligned}
(\tilde f+\tilde g\circ\tilde F)^*(0)
&=\min\limits_{\overset{(u,v)\in -K^\circ\times -L^\circ}{y\in\bE_1}}\; (f+\delta_X)^*(y)+ g^*(u)+\delta_{-L^\circ}(v)+\big(\ip{u}{F}+\ip{v}{G}\big)^*(-y)\\
&=\min_{(u,v)\in -K^\circ\times -L^\circ}\; g^*(u)+\min_{y\in\bE_1}\big\{(f+\delta_X)^*(y)+\big(\ip{u}{F}+\ip{v}{G}\big)^*(-y)\big\}\\
&=\min\limits_{\overset{(u,v)\in -K^\circ\times -L^\circ}{y, z, s\in\bE_1}}\; g^*(u)+\ip{u}{F}^*(y)+\ip{v}{G}^*(z)+\sigma_X(s)+f^*(-y-z-s),
\end{aligned}
\end{equation}
where the   last identity  follows from  successively applying  Theorem~\ref{th:AttBre} which is possible due to  \eqref{eq:CQfarkas}. The desired implication then follows from \eqref{eq:farkas3b} and \eqref{eq:farkas3} (and the fact that the minimum is taken).

\noindent
b) $\Rightarrow$ a): We observe that
\begin{equation}
\label{eq:farkas3a}
\begin{aligned}
0&\geq\inf\limits_{\overset{(u,v)\in -K^\circ\times -L^\circ}{y, z, s\in\bE_1}}\; g^*(u)+\ip{u}{F}^*(y)+\ip{v}{G}^*(z)+\sigma_X(s)+f^*(-y-z-s)\\
&=\inf\limits_{(u,v)\in -K^\circ\times -L^\circ}\;g^*(u)+(f^*\Box\sigma_X\Box\ip{u}{F}^*\Box\ip{v}{G}^*)(0)\\
&=\inf\limits_{(u,v)\in -K^\circ\times -L^\circ}\;g^*(u)+((f+\delta_X)^*\Box(\ip{u}{F}+\ip{v}{G})^*)(0)\\
&=\inf\limits_{(u,v)\in -K^\circ\times -L^\circ}\;g^*(u)+\inf_{y\in\bE_1}\big\{(f+\delta_X)^*(y)+\big(\ip{u}{F}+\ip{v}{G}\big)^*(-y)\big\}\\
&=\inf\limits_{y\in\bE_1}\;(\tilde f)^*(y)+\inf_{(u,v)\in -K^\circ\times -L^\circ}\big\{g^*(u)+\delta_{-L^\circ}(v)+\big(\ip{u}{F}+\ip{v}{G}\big)^*(-y)\big\}\\
&\geq\inf\limits_{y\in\bE_1}\;(\tilde f)^*(y)+(\tilde g\circ\tilde F)^*(-y)\\
&\geq -\inf_{x\in\bE_1}\tilde f(x)+\tilde g(\tilde F(x))\\
&=-\inf_{x\in X, G(x)\in -L}\;f(x)+g(F(x)),
\end{aligned}
\end{equation}
where the  third line  repeatedly uses Theorem \ref{th:AttBre} and \eqref{eq:CQfarkas},  
the sixth line follows from Theorem~\ref{th:ConjMain}~a), and the seventh is due to \eqref{eq:FYI}. The desired implication now follows from \eqref{eq:farkas3a}.  
\end{proof}

\begin{remark} We have the following observations:
\begin{itemize}
\item[a)] 
The result and the proof of Theorem~\ref{th:farkas} can be extended to the case where $f$ and the convex  convex-composite function are finite sums of functions of the same type, respectively. 
\item[b)]  One can also state Theorem~\ref{th:farkas} as a theorem of the alternative:
 Either
\[
\begin{cases}
x\in X, G(x)\in -L,\\
 f(x)+g(F(x))<0
 \end{cases}
\quad\text{ or }\quad 
\begin{cases}
u\in -K^\circ, v\in -L^\circ, y, z, s\in\bE_1,\\
g^*(u)+\ip{u}{F}^*(y)+\ip{v}{G}^*(z)+\sigma_X(s)+f^*(-y-z-s)\leq 0
 \end{cases}
 \]
 has a solution (but not both). 
 
 \end{itemize}
\end{remark}

\section{Final remarks} 

\noindent
In this note we developed a full conjugacy and subdifferential calculus for the important subclass of  convex-composite functions that are fully convex. Working  in the finite-dimensional setting, we took advantage of the  relative topology of convex sets, which promoted   infimal convolution  as our main  tool. Combined with the concept of $K$-convexity  this enabled us to prove the central convex-analytic results for convex convex-composite functions under weak assumptions (no lower semicontinuity and  weak monotonicity properties) on the functions in play, using a  verifiable, point-based Slater-type qualification condition.     A myriad of applications underlined  the versatility of our findings and the unifying strength of the convex-composite setting even in the  fully convex case. In particular, we were able to prove a new extension for the matrix-fractional function to the complex domain, and obtain new and simple  proofs for the conjugate and subdifferential of variational Gram functions, an alternative proof of Lewis'  result on  spectral maps in the convex setting, as well as a refined version of a  Farkas-type result due to Bot et al. 

For future research we plan on bringing applying  the convex-analytic results established here to  numerical methods for solving convex convex-composite optimization problems.

\section*{Acknowledgments.}
The first author was partially supported by  NSF grant DMS­1514559. The second author was partially supported by NSERC discovery grant RGPIN-2017-04035. The third author was partially supported by a grant from  {\em Centre de recherches de math\'ematiques (CRM)}, Montr\'eal.




\end{document}